\newtheorem{theorem}{Theorem}[section]
\newtheorem{proposition}[theorem]{Proposition}
\newtheorem{lemma}[theorem]{Lemma}
\newtheorem{conjecture}[theorem]{Conjecture}
\theoremstyle{definition}
\newtheorem{example}[theorem]{Example}
\newtheorem{remark}[theorem]{Remark}
\DeclareMathOperator{\B}{B}%
\DeclareMathOperator{\Cl}{Cl}%
\DeclareMathOperator{\bigDiv}{Div}%
\DeclareMathOperator{\Div}{div}%
\DeclareMathOperator{\Cokernel}{Coker}%
\DeclareMathOperator{\image}{Image}%
\DeclareMathOperator{\HH}{H}%
\DeclareMathOperator{\Pic}{Pic}%
\DeclareMathOperator{\Sing}{Sing}%
\DeclareMathOperator{\Spec}{Spec} %
\DeclareMathOperator{\Proj}{Proj} %
\DeclareMathOperator{\Tor}{Tor} %
\title[Group of Units]{The Group of Units on an Affine Variety}%
\author{Timothy J. Ford}%
\email{\tt ford@fau.edu}%
\address{Department of Mathematics, Florida Atlantic University,
  Boca Raton, Florida 33431}%
\date{\today}%
\dedicatory{Dedicated to Fook Loy} %
\begin{document}

\begin{abstract}
  The object of study is the group of units $\mathcal O^\ast(X)$ in
  the coordinate ring of a normal affine variety $X$ over an
  algebraically closed field $k$.  Methods of Galois cohomology are
  applied to those varieties that can be presented as a finite cyclic
  cover of a rational variety.  On a cyclic cover $X \rightarrow
  \mathbb A^m$ of affine $m$-space over $k$ such that the ramification
  divisor is irreducible and the degree is prime, it is shown that
  $\mathcal O^\ast(X)$ is equal to $k^\ast$, the nonzero scalars.  The
  same conclusion holds, if $X$ is a sufficiently general affine
  hyperelliptic curve.  If $X$ has a projective completion such that
  the divisor at infinity has $r$ components, then sufficient
  conditions are given for $\mathcal O^\ast(X)/k^\ast$ to be
  isomorphic to $\mathbb Z^{(r-1)}$.
\end{abstract}

\subjclass[2010]{13A05; %
  Secondary 14C20, 14R99}%
\keywords{affine algebraic variety, group of units, class group}%

\maketitle

\section{Introduction}
\label{sec:1}

\subsection{Statement of problem and summary of results}
\label{sec:1.1}
Throughout, $X$ is a normal variety defined over an algebraically
closed field $k$.  The ring of regular functions on $X$ is denoted
$\mathcal O(X)$ and the group of units, or invertible regular
functions, is denoted $\mathcal O^\ast(X)$.  If $X$ is a projective
variety in $\mathbb P^m$, the ring $\mathcal O(X)$ is always equal to
$k$ \cite[Theorem~I.3.4]{H:AG}, hence the group of units $\mathcal
O^\ast(X)$ is equal to $k^\ast$.  This confirms the intuitive notion
that on a projective variety the only functions with no zero set and
no pole set are the non-zero constants.  For this reason, as indicated
by the title, we focus on those varieties $X$ that are affine.

This article is concerned with the general question, ``What is the
group of units on an affine variety?''
Our main results tend
to be one of two types of statements, either sufficient conditions on
$X$ such that the group of units is trivial, or sufficient conditions
on $X$ such that the group of units is ``large''.  This terminology is
explained in Section~\ref{sec:1.2}.  We focus on those varieties $X$
that are either rational, or can be presented as a finite cyclic cover
of a rational variety.

The group of units on a variety $X$ is intimately connected to the
class group of Weil divisors, the Picard group, and the Brauer group.
In fact, these  connections are what led to the present study.  For
this reason, the strategy we employ is to exploit what is known about
the latter groups in order to compute the former.  Most of our
computations are derived using Galois cohomology or \'etale cohomology.

In Section~\ref{sec:2} the affine variety $X$ is a hypersurface in
$\mathbb A^{m+1}$ defined by an equation of the form $z^n = f$, where
$f$ is a polynomial in $k[x_1, \dots, x_m]$.  In
Theorems~\ref{th:2.10} and ~\ref{th:2.12}, we show that $\mathcal
O^\ast(X)$ is equal to $k^\ast$ when $f$ is irreducible and $n$ is
either a prime or is equal to $4$.  The proof relies heavily on Galois
cohomology.

In Section~\ref{sec:3} the variety $X$ is an affine open subvariety of
a finite cyclic covering $\pi : Y \rightarrow \mathbb P^m$.
Section~\ref{sec:3.1} treats the case where $\pi$ is unramified along
the divisor at infinity of $X$.  We apply these results to affine
hyperelliptic curves in Section~\ref{sec:3.2}.  If $\pi : Y
\rightarrow \mathbb P^1$ is a ramified cyclic cover of prime degree
and $k= \mathbb C$, then for a sufficiently general $Q \in \mathbb
P^1$, the group of units on the affine curve $X = \pi^{-1}(\mathbb
P^1- Q)$ is trivial (Theorem~\ref{th:3.4}).  The proofs are based on
Galois cohomology.  We see that for an affine plane curve over
$\mathbb C$ defined by an equation of the form $y^p = (x-\lambda_1)
\dotsm (x-\lambda_n)$, where $p \mid n$, the size of the group of
units depends not only on the number of points at infinity, but on the
choices of the scalars $\lambda_i$.  The proof relies on the fact that
$\mathbb C$ is uncountable and we mention here that this phenomenon
does not occur over finite ground fields.  For a nonsingular affine
curve over a finite field, the group of units always has a free
component of rank one less than the number of points at infinity (see
for example, \cite[Proposition~1.2]{MR3013032}).
Section~\ref{sec:3.3} considers the case where the ramification
divisor of $\pi$ is equal to the divisor at infinity of $X$. In this
case we show that the group of units is large
(Proposition~\ref{prop:3.12}).  The proof is based on \'etale
cohomology.  These results are applied in Section~\ref{sec:3.4} to
compute the group of units on an affine hypersurface $X$ in $\mathbb
A^m$ defined by an equation of the form $f_1 \dotsm f_r = 1$.  In
Theorem~\ref{theorem:3.15} we show that $\mathcal O^\ast(X)/k^\ast$ is
a free $\mathbb Z$-module with basis $f_1, \dots, f_{r-1}$ if the
following conditions are satisfied: $r \geq 2$, $m \geq 2$, $f_1$,
\dots, $f_r$ are distinct non-constant forms in $k[x_1, \dots, x_m]$,
and the degrees $\deg(f_1)$, \dots, $\deg(f_r)$ are relatively prime.
Results of Section~\ref{sec:3} have been applied in \cite{F:Bgars}.

After reviewing some background results, Section~\ref{sec:1} concludes
with examples meant to provide motivation for the rest of the article.

\subsection{Background results}
\label{sec:1.2}
For background definitions and theorems on homological algebra, we
refer the reader to \cite{R:IHA}. For \'etale cohomology, we suggest
\cite{M:EC}, and for all other unexplained terminology and notation,
\cite{H:AG}.

Lemma~\ref{lemma:1.1} reduces the computation of $\mathcal O^\ast(X)$
as an abstract group to the determination of the rank of a free
$\mathbb Z$-module.  Given the affine variety $X$ and a projective
completion $Y$, we see that the group of units on $X$ is closely tied
to the components of the divisor at infinity and the subgroup that
they generate in the class group $\Cl(Y)$. 
If $Y$ is a normal variety, the
divisor class group $\Cl(Y)$ is the group of Weil divisors modulo the
subgroup of principal Weil divisors
\cite[Section~II.6]{H:AG}.
The rank of the free
$\mathbb Z$-module $\mathcal O^\ast(X)/k^\ast$ is bounded above by
$r$, the number of irreducible components of the divisor at infinity.
The group $\mathcal O^\ast(X)$ is trivial, if it is equal to $k^\ast$.
The group $\mathcal O^\ast(X)$ is said to be {\em large}, if the rank
of $\mathcal O^\ast(X)/k^\ast$ is greater than or equal to $r-1$.
\begin{lemma}
  \label{lemma:1.1}
  \cite[Lemme~1]{MR0204454} If $X$ is a normal affine variety, the
  group $\mathcal O^\ast(X)/k^\ast$ is a finitely generated torsion
  free $\mathbb Z$-module.
\end{lemma}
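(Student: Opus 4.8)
The plan is to realize $\mathcal O^\ast(X)/k^\ast$ as a subgroup of a finitely generated free abelian group, by comparing the divisor of a unit on $X$ with its divisor on a normal projective completion. First I would dispose of the torsion: suppose $u \in \mathcal O^\ast(X)$ and $\bar u$ has finite order $n$ in $\mathcal O^\ast(X)/k^\ast$, so $u^n = c$ for some $c \in k^\ast$. Since $k$ is algebraically closed it contains an $n$th root $d$ of $c$, and then $(ud^{-1})^n = 1$. Because $X$ is a variety, $\mathcal O(X)$ is an integral domain, so the polynomial $T^n - 1$ has all of its roots in $k$; hence $ud^{-1} \in k^\ast$ and $\bar u = 0$. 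Thus $\mathcal O^\ast(X)/k^\ast$ is torsion free, and it remains to establish finite generation. In fact I would prove the sharper statement that it embeds in $\mathbb Z^r$, which also yields the rank bound anticipated in the discussion above.

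For finiteness, choose a normal projective completion of $X$. Embedding $X$ as a closed subvariety of some $\mathbb A^N \subseteq \mathbb P^N$, let $\bar X$ be its closure in $\mathbb P^N$ and let $\nu : Y \to \bar X$ be the normalization. Then $Y$ is projective (normalization is finite, and a finite morphism to a projective variety has projective source), $Y$ is normal, and since $X$ is already normal the morphism $\nu$ restricts to an isomorphism over the open set $X \subseteq \bar X$. Identifying $X$ with the corresponding open subvariety of $Y$, the closed complement $Y \setminus X$ has only finitely many irreducible components of codimension one, say $D_1, \dots, D_r$.

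Next I would define $\phi : \mathcal O^\ast(X) \to \mathbb Z^r$ by $\phi(u) = \bigl(v_{D_1}(u), \dots, v_{D_r}(u)\bigr)$, where $v_{D_i}$ is the discrete valuation attached to the prime divisor $D_i$ (available because $Y$ is normal); this is a group homomorphism. If $C$ is a prime divisor of $Y$ not contained in $Y \setminus X$, then the generic point of $C$ lies in $X$, so $u$ is a unit of $\mathcal O(X) \subseteq \mathcal O_{Y,C}$ and $v_C(u) = 0$; hence the only prime divisors occurring in $\Div_Y(u)$ are among $D_1, \dots, D_r$, and $\phi$ records $\Div_Y(u)$ completely. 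Consequently $u \in \ker\phi$ if and only if $\Div_Y(u) = 0$, if and only if $u$ is a regular function on the projective variety $Y$, that is, $u \in \mathcal O(Y)^\ast = k^\ast$ by \cite[Theorem~I.3.4]{H:AG}. Therefore $\phi$ induces an injection $\mathcal O^\ast(X)/k^\ast \hookrightarrow \mathbb Z^r$, and any subgroup of $\mathbb Z^r$ is free of rank at most $r$; in particular $\mathcal O^\ast(X)/k^\ast$ is a finitely generated torsion free $\mathbb Z$-module.

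The one step I expect to require genuine care is the construction of the normal projective completion together with the verification that $X$ sits inside it as an open subvariety: one needs that normalization is finite, hence preserves projectivity, and that it is an isomorphism over the normal locus, so that no codimension-one behaviour of units on $X$ is lost when passing to $Y$. Everything after that is formal — the valuation-theoretic identification of $\ker\phi$ and the structure theory of subgroups of $\mathbb Z^r$.
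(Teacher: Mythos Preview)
Your proof is correct and follows essentially the same route as the paper: embed $X$ in a normal projective completion $Y$, and use that the divisor of a unit on $X$ is supported on the finitely many prime divisors $D_1,\dots,D_r$ of $Y\setminus X$, so that $\mathcal O^\ast(X)/\mathcal O^\ast(Y)=\mathcal O^\ast(X)/k^\ast$ injects into $\bigoplus_i\mathbb Z D_i$; the paper packages this as the Nagata exact sequence~\eqref{eq:101}, while you unpack the map $\Div$ explicitly via the valuations $v_{D_i}$. Your separate opening paragraph disposing of torsion is correct but redundant, since the embedding into $\mathbb Z^r$ already forces torsion-freeness.
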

\begin{proof}
  We sketch Samuel's proof.  Embed $X$ as an open affine subvariety of
  a normal projective variety $Y$. Write the ``divisor at infinity''
  as a union of prime divisors $Y - X = Z_1 \cup \dotsm \cup Z_r$.  As
  previously remarked, $\mathcal O^\ast(Y) = k^\ast$.  The claim
  follows from the following version of the exact sequence
  \begin{equation}
    \label{eq:101}
    1 \rightarrow
    \mathcal O^\ast(Y)
    \rightarrow
    \mathcal O^\ast(X)
    \xrightarrow{\Div}
    \bigoplus_{i=1}^r \mathbb Z Z_i
    \rightarrow
    \Cl{(Y)}
    \rightarrow
    \Cl{(X)}
    \rightarrow 0
  \end{equation}
  of Nagata (for example, \cite[Theorem~1.1]{F:Bgac}).
\end{proof}
Lemma~\ref{lemma:1.2} shows that the affine cone over a projective
variety has trivial group of units.  The proof can be found in
\cite{F:Bgac}.
\begin{lemma}
  \label{lemma:1.2}
  Let $f \in k[x_0, \dots, x_m]$ be a square-free homogeneous
  polynomial of degree $d>0$, defining a projective hypersurface $V$
  in $\mathbb P^m$.  Let $X=Z(f)$ be the affine cone over $V$ in
  $\mathbb A^{m+1}$. Then $\mathcal O^\ast(X) = k^\ast$.
\end{lemma}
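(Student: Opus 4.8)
The plan is to realize $X = Z(f) \subset \mathbb A^{m+1}$ as an affine open subvariety of a projective variety $Y$ and then apply the Nagata exact sequence \eqref{eq:101}, exactly as in the sketch of Lemma~\ref{lemma:1.1}. First I would take $Y$ to be the projective closure of the affine cone $X$ inside $\mathbb P^{m+1}$, with homogeneous coordinates $x_0, \dots, x_m, x_{m+1}$, where $x_{m+1}$ is the coordinate whose non-vanishing cuts out the chart $\mathbb A^{m+1}$. Since $f$ is homogeneous of degree $d$ in $x_0, \dots, x_m$, its homogenization is $f$ itself (no new terms are introduced), so $Y = Z(f) \subset \mathbb P^{m+1}$, and the divisor at infinity $Y - X$ is the intersection $Y \cap Z(x_{m+1})$, i.e.\ the hypersurface $\{f = 0\}$ inside the hyperplane at infinity $\mathbb P^m = Z(x_{m+1})$. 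Because $f$ is square-free, this divisor at infinity is reduced, and its irreducible components are precisely the prime divisors $Z_1, \dots, Z_r$ corresponding to the irreducible factors $f = f_1 \cdots f_r$ of $f$. One must check that $Y$ is normal; this is where a hypothesis or a standard criterion is needed, and the cleanest route is to observe that the cone over a normal projective hypersurface is normal in codimension one and Cohen--Macaulay, hence normal by Serre's criterion — but see the caveat below.

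Next I would feed this geometric setup into \eqref{eq:101}:
\[
  1 \to k^\ast \to \mathcal O^\ast(X) \xrightarrow{\;\Div\;} \bigoplus_{i=1}^r \mathbb Z Z_i \to \Cl(Y) \to \Cl(X) \to 0.
\]
The key observation is that each $Z_i$ is \emph{not} a principal divisor on $Y$, and moreover no nontrivial integer combination $\sum n_i Z_i$ is principal: if $\sum n_i Z_i = \Div(g)$ for some $g \in \mathcal O^\ast(X)$, then $g$ would be a rational function on $Y$ regular and nowhere zero on $X = Y - \bigcup Z_i$, hence a unit of $\mathcal O(X)$ whose divisor is supported at infinity. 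Since $X$ is the affine cone, $\mathcal O(X) = k[x_0,\dots,x_m]/(f)$, and the only units in this ring are the nonzero constants — this is the heart of the matter and follows because $k[x_0,\dots,x_m]/(f)$ is a graded ring whose degree-zero part is $k$, and a unit in a graded domain (or reduced graded ring with the right structure) must be homogeneous of degree zero. Concretely: if $uv = 1$ with $u, v$ homogeneous-decomposable, comparing lowest and highest degrees forces $u, v$ to be constants. This shows the map $\Div$ in \eqref{eq:101} is zero, and exactness then gives $\mathcal O^\ast(X) = k^\ast$ directly, without needing to compute $\Cl(Y)$ or $\Cl(X)$ at all.

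The main obstacle — and the reason this is stated as a lemma with a reference rather than proved inline — is twofold. First, one must handle the fact that $f$ need not be irreducible: when $r \geq 2$ the ring $\mathcal O(X) = k[x_0,\dots,x_m]/(f)$ is not a domain, so "unit $\Rightarrow$ homogeneous of degree zero" requires the graded-ring argument above applied componentwise to the (possibly several) minimal primes, or equivalently working in the total ring of fractions; the reduced-and-graded structure still forces units into $k^\ast$ because the idempotents of $\mathcal O(X)_{\mathrm{red}} = \mathcal O(X)$ live in degree zero and the cone is connected in the relevant sense. Second, the normality of $Y$ and the validity of the sequence \eqref{eq:101} in this mildly singular setting (the cone always has a singular point at the origin, and $Y$ may be singular along $Z(x_{m+1}) \cap \Sing$) must be justified — one wants $Y$ normal so that $\Cl(Y)$ makes sense and Nagata's sequence applies; for a square-free $f$ the cone is normal precisely when $V$ is normal, which is not automatic, so the honest statement either restricts to that case or invokes the more robust unit-theoretic argument that bypasses $\Cl(Y)$ entirely. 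I would present the direct argument: $X$ is affine with coordinate ring a connected reduced $\mathbb N$-graded $k$-algebra finitely generated in degree one with $(\mathcal O(X))_0 = k$, and for any such ring the unit group is $k^\ast$; this is self-contained and avoids all normality subtleties.
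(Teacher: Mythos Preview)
Your final, direct argument is correct and is the natural proof: $\mathcal O(X)=k[x_0,\dots,x_m]/(f)$ is a reduced $\mathbb N$-graded $k$-algebra with degree-zero piece equal to $k$; for each minimal (necessarily homogeneous) prime $\mathfrak p_i=(f_i)/(f)$ the quotient is a graded domain with $(A/\mathfrak p_i)_0=k$, so any unit maps to an element of $k^\ast$ in each $A/\mathfrak p_i$; comparing homogeneous components and using $\bigcap_i\mathfrak p_i=0$ forces the unit to lie in $k^\ast$. This is self-contained and, as you note, sidesteps all normality issues. The paper does not actually prove Lemma~\ref{lemma:1.2}: it simply cites \cite{F:Bgac}, so there is no ``paper's proof'' to compare against beyond saying that your graded-ring argument is the standard one and almost certainly what is in that reference.

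One comment on presentation: your first two paragraphs, which set up the Nagata sequence \eqref{eq:101} for the projective closure $Y\subset\mathbb P^{m+1}$, are a detour that becomes circular. You observe that showing $\Div$ is the zero map amounts to showing that every unit of $\mathcal O(X)$ has trivial divisor, and then you justify this by saying ``the only units in this ring are the nonzero constants'' --- but that is exactly the statement of the lemma. The sequence \eqref{eq:101} is useful when one can compute $\Cl(Y)$ or the image of the $Z_i$ in it by \emph{other} means; here you have no independent handle on $\Cl(Y)$, so the sequence contributes nothing. I would drop that discussion entirely (along with the attendant worries about whether $Y$ is normal) and present only the graded-ring argument, which is short, complete, and does all the work.
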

In Lemma~\ref{lemma:1.3} we compute the group of units on a basic
Zariski open affine subset of $\mathbb A^m$.
\begin{lemma}
  \label{lemma:1.3}
  Let $A = k[x_1, \dots, x_m]$ be the coordinate ring of $\mathbb
  A^m$. Let $f \in A$ be non-invertible and square-free with
  factorization $f =f_1 \dotsm f_\nu$ into irreducible elements of
  $A$.  If $R = A[f^{-1}]$, then
  \begin{enumerate}[(a)]
  \item $A^\ast = k[x_1, \dots, x_m]^\ast = k^\ast$.
  \item $R^\ast = k[x_1, \dots, x_m][f^{-1}]^\ast = k^\ast \times
    \langle f_1\rangle \times \dotsm \times \langle f_\nu\rangle$.
  \end{enumerate}
\end{lemma}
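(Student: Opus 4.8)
The plan is to reduce both parts to the fact that $A = k[x_1,\dots,x_m]$ is a UFD and then to exploit uniqueness of factorization. For part (a), I would argue by induction on $m$: the case $m=0$ is the tautology $k^\ast = k^\ast$, and for the inductive step I would use the identification $k[x_1,\dots,x_m] = k[x_1,\dots,x_{m-1}][x_m]$ together with the elementary fact that for an integral domain $B$ one has $B[x]^\ast = B^\ast$ (compare leading coefficients of $g$ and $h$ when $gh=1$). Alternatively a direct total-degree argument works: if $gh=1$ in $A$ then $\deg g + \deg h = 0$, so $g,h \in k$ and hence $g,h \in k^\ast$.

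For part (b), the first observation is that each $f_i$ divides $f$, hence is invertible in $R = A[f^{-1}]$; combined with $k^\ast = A^\ast \subseteq R^\ast$ from part (a), this shows that the subgroup of $R^\ast$ generated by $k^\ast$ and $f_1,\dots,f_\nu$ is contained in $R^\ast$. The substance of the lemma is the reverse inclusion together with the assertion that the product is direct.

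For the reverse inclusion, I would take a unit $u \in R^\ast$ and write $u = g/f^N$, $u^{-1} = h/f^M$ with $g,h \in A$ and $N,M \ge 0$. The relation $uu^{-1}=1$ gives $gh = f^{N+M}$ in $A$, and since $f = f_1 \dotsm f_\nu$ is square-free, $f^{N+M} = f_1^{N+M}\dotsm f_\nu^{N+M}$ is its prime factorization; by uniqueness of factorization every irreducible factor of $g$ is an associate of some $f_i$, so $g = c\, f_1^{a_1}\dotsm f_\nu^{a_\nu}$ for some $c \in A^\ast = k^\ast$ and $0 \le a_i \le N+M$. Dividing by $f^N$ exhibits $u$ in $k^\ast \cdot \langle f_1\rangle \dotsm \langle f_\nu\rangle$. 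To see the product is direct (in particular that each $\langle f_i\rangle$ is infinite cyclic), suppose $c\, f_1^{b_1}\dotsm f_\nu^{b_\nu} = 1$ in $R$ with $c \in k^\ast$ and $b_i \in \mathbb Z$; multiplying through by $f^N$ with $N$ chosen so that $N + b_i \ge 0$ for all $i$ gives $c\, f_1^{N+b_1}\dotsm f_\nu^{N+b_\nu} = f_1^N \dotsm f_\nu^N$ in the UFD $A$, and comparing multiplicities of each prime $f_i$ forces $b_i = 0$ and then $c = 1$.

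I do not expect a real obstacle here; the only point needing a moment's care is that when a unit of $R$ is written as $g/f^N$ the numerator $g$ carries no spurious irreducible factors — but this is exactly what the equation $gh = f^{N+M}$ prevents, since no prime outside $\{f_1,\dots,f_\nu\}$ can divide a power of $f$. Note that the argument uses only that $A$ is a UFD with unit group $k^\ast$, so it goes through verbatim with $\mathbb A^m$ replaced by any factorial affine variety; this is the form in which the lemma will actually be used later.
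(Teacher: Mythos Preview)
Your argument is correct, but the paper proves the lemma differently: it invokes the Nagata-type exact sequence \eqref{eq:101} in the form
\[
1 \rightarrow A^\ast \rightarrow R^\ast \xrightarrow{\Div} \bigoplus_{i=1}^\nu \mathbb Z F_i \rightarrow \Cl(A) \rightarrow \Cl(R) \rightarrow 0,
\]
observes that $\Cl(A) = \langle 0\rangle$ since $A$ is a UFD, so $\Div$ is onto, and then notes that the resulting short exact sequence splits because $\bigoplus \mathbb Z F_i$ is free. Your route is the direct elementary one: write a unit as $g/f^N$, use unique factorization on $gh = f^{N+M}$, and read off the exponents. What your approach buys is self-containment---no divisor machinery is needed, and the directness of the product is made explicit. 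What the paper's approach buys is that it rehearses exactly the exact-sequence template \eqref{eq:101} that drives every later computation in the article; the lemma is really being used as a warm-up for that sequence rather than as a standalone fact, and the splitting argument also makes the identification of the free generators with the $f_i$ (via $\Div(f_i) = F_i$) immediate without a separate directness check.
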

\begin{proof}
  The factorization of $f$ is unique because $A$ is a unique
  factorization domain. In addition, $\Cl(A) = \Cl(R) = \langle
  0\rangle$.  The counterpart of \eqref{eq:101} for the localization
  $R = A[f^{-1}]$ is
  \begin{equation}
    \label{eq:102}
    1 \rightarrow A^\ast
    \rightarrow R^\ast
    \xrightarrow{\Div}
    \bigoplus_{i=1}^\nu \mathbb Z F_i
    \rightarrow
    \Cl(A)
    \rightarrow
    \Cl(R)
    \rightarrow 0
  \end{equation}
  where $F_i = Z(f_i)$, for $i = 1, \dots, \nu$.  The sequence splits
  because $\bigoplus \mathbb Z F_i$ is free.
\end{proof}
We will apply Theorem~\ref{th:1.4} frequently.  It is well known, but
for convenience, as well as to establish notation, we state it here.
\begin{theorem}
  \label{th:1.4}
  Let $G = \langle \sigma\rangle$ be a finite cyclic group of order
  $n$ and $M$ a $G$-module.  In $\mathbb Z G$ define elements $D =
  \sigma - 1$ and $N = 1 + \sigma + \sigma^2 +\dots + \sigma^{n-1}$.
  Let ${_N M} = \{ x \in M \mid Nx = 0\}$ denote the kernel of $N: M
  \rightarrow M$. Then
  \[
  \begin{split}
    \HH^0(G,M) &= M^G \\
    \HH^{2i-1}(G,M) &= {_N M}/D M \\
    \HH^{2i}(G,M) &= {M^G}/N M
  \end{split}
  \]
  for all $i \geq 1$.
\end{theorem}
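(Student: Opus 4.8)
The plan is to compute $\HH^\bullet(G,M) = \operatorname{Ext}^\bullet_{\mathbb Z G}(\mathbb Z, M)$ from the classical $2$-periodic free resolution of $\mathbb Z$ over the group ring $\mathbb Z G = \mathbb Z[\sigma]/(\sigma^n-1)$. Since $G$ is abelian, both $D = \sigma-1$ and $N = 1 + \sigma + \dots + \sigma^{n-1}$ are central in $\mathbb Z G$, so multiplication by either of them is a $\mathbb Z G$-module endomorphism of $\mathbb Z G$. First I would record the identity $DN = ND = \sigma^n - 1 = 0$ in $\mathbb Z G$ and then assert that
\[
\cdots \xrightarrow{N} \mathbb Z G \xrightarrow{D} \mathbb Z G \xrightarrow{N} \mathbb Z G \xrightarrow{D} \mathbb Z G \xrightarrow{\varepsilon} \mathbb Z \to 0
\]
is a free resolution, where $\varepsilon$ is the augmentation.

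The only step with content is verifying exactness, and it reduces to a few short calculations in the free $\mathbb Z$-module $\mathbb Z G$. Writing $\alpha = \sum_{j=0}^{n-1} a_j\sigma^j$: the augmentation $\varepsilon$ is onto with kernel the augmentation ideal $I_G$; since $N\sigma^j = N$ one has $N\alpha = \bigl(\sum_j a_j\bigr)N$, whence $\ker(N\mid \mathbb Z G) = \ker\varepsilon = I_G$, while $\operatorname{im}(D\mid\mathbb Z G) = I_G$ because $I_G$ is spanned by the elements $\sigma^j-1 = D(1+\sigma+\dots+\sigma^{j-1})$; and $D\alpha = \sum_j (a_{j-1}-a_j)\sigma^j$ (indices mod $n$) vanishes exactly when all the $a_j$ coincide, i.e.\ when $\alpha \in \mathbb Z G\cdot N = \operatorname{im}(N\mid\mathbb Z G)$. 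Together with $DN = ND = 0$, these identities give exactness at every term, and since each term is $\mathbb Z G$-free the complex is a projective resolution of $\mathbb Z$.

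Next I would apply $\Hom_{\mathbb Z G}(-,M)$ and use the canonical isomorphism $\Hom_{\mathbb Z G}(\mathbb Z G, M) \cong M$ sending $f$ to $f(1)$. Under this identification the map induced by multiplication by the central element $D$ becomes multiplication by $D$ on $M$, since $f(D) = Df(1)$, and likewise for $N$; the augmentation contributes only in degree $0$. Hence $\HH^\bullet(G,M)$ is the cohomology of
\[
0 \to M \xrightarrow{D} M \xrightarrow{N} M \xrightarrow{D} M \xrightarrow{N} \cdots .
\]
Reading off cohomology: $\HH^0(G,M) = \ker(D\mid M) = \{x\in M : \sigma x = x\} = M^G$; for $i\geq 1$ the complex near degree $2i-1$ is $M \xrightarrow{D} M \xrightarrow{N} M$, giving $\HH^{2i-1}(G,M) = {_NM}/DM$; and near degree $2i$ it is $M \xrightarrow{N} M \xrightarrow{D} M$, giving $\HH^{2i}(G,M) = M^G/NM$. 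The $2$-periodicity of the resolution is precisely the asserted periodicity of the answer, so nothing further is needed.

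There is no serious obstacle here: the whole argument is formal once the periodic resolution is in hand, and that resolution is classical and may be quoted from a standard homological algebra reference such as \cite{R:IHA}. If one preferred to avoid resolutions, $\HH^0$, $\HH^1$, and $\HH^2$ could instead be computed directly from cocycles and the dimension-shift/periodicity theorem for cyclic groups would then propagate the result to all higher degrees; but the resolution approach is cleaner and makes the periodicity transparent.
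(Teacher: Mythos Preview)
Your proof is correct and is exactly the standard argument via the $2$-periodic free resolution of $\mathbb Z$ over $\mathbb Z G$; the paper itself does not give a proof but simply cites \cite[Theorem~10.35]{R:IHA} and \cite[\S~17.2]{MR2286236}, which present precisely this argument. So you have supplied in full what the paper leaves to the references.
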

\begin{proof}
  See \cite[Theorem~10.35]{R:IHA}, or \cite[\S~17.2]{MR2286236}.
\end{proof}

\subsubsection{Background results from \'etale cohomology}
\label{sec:1.2.1}
In this section we review some of the results from \'etale cohomology
that will be utilized in Section~\ref{sec:3.3}.  For any variety $X$
over $k$, we denote by $\mathbb G_m$ the sheaf of units for the
\'etale topology.  Then $\mathcal O^\ast(X) = \HH^0(X,\mathbb G_m)$ is
the group of global units on $X$.  We identify $\Pic{X}$, the Picard
group of $X$, with $\HH^1(X,\mathbb G_m)$
\cite[Proposition~III.4.9]{M:EC}.
The natural map $\Pic(X) \rightarrow
\Cl(X)$ is one-to-one \cite[Corollary~18.5]{F:DCG}.  If $X$ is
regular, $\Pic(X) =\Cl(X)$ \cite[Corollary~II.6.16]{H:AG}.

The torsion subgroup of $\HH^2(X, \mathbb G_m)$ is denoted
$\B^\prime(X)$ and is called the cohomological Brauer group.  If $X$
is regular, $\HH^2(X, \mathbb G_m)$ is torsion \cite[Proposition~1.4,
p.  71]{G:GBII}.  The Brauer group of classes of $\mathcal
O(X)$-Azumaya algebras is denoted $\B(X)$.  There is a natural
embedding $\B(X) \rightarrow \B^\prime(X)$ \cite[(2.1), p.
51]{G:GBI}.  In this article, the Brauer group plays only a peripheral
role. If $X$ is regular and integral with field of rational functions
$K$, then the natural map
\begin{equation}
  \label{eq:8}
  \HH^2(X, \mathbb G_m) \rightarrow \B(K)
\end{equation}
is one-to-one \cite[Corollary~1.8, p. 73]{G:GBII}.

Let $\nu > 1$ be an integer that is invertible in $k$.  The kernel of
the $\nu$th power map on $\mathbb G_m$ is $\mu_\nu$, the sheaf of
$\nu$th roots of unity.  By Kummer theory, the $\nu$th power map gives
an exact sequence of sheaves
\begin{equation}
  \label{eq:7}
  1 \rightarrow \mu_\nu \rightarrow \mathbb G_m
  \xrightarrow{\nu} \mathbb G_m \rightarrow 1
\end{equation}
for the \'etale topology on $X$ \cite[Example~II.2.18(b)]{M:EC}.  The
long exact sequence of cohomology associated to \eqref{eq:7} is
\begin{equation}
  \label{eq:12}
  \dotsm
  \rightarrow \HH^{i-1}(X, \mathbb G_m)
  \xrightarrow{\partial^{i-1}}
  \HH^{i}(X, \mu_\nu)
  \rightarrow
  \HH^{i}(X, \mathbb G_m)
  \xrightarrow{\nu}
  \HH^{i}(X, \mathbb G_m) 
  \rightarrow \dotsm\ldotp
\end{equation}
By ${_\nu\Pic(X)}$ we denote the subgroup of $\Pic(X)$ annihilated by
$\nu$. Then for $i=1$, \eqref{eq:12} gives rise to the short-exact
sequence
\begin{equation}
  \label{eq:11}
  1 \rightarrow \mathcal O^\ast(X) /
  \left(\mathcal O^\ast(X)\right)^\nu
  \rightarrow \HH^1(X,\mu_\nu)
  \rightarrow {_\nu\Pic{X}} \rightarrow 0\ldotp
\end{equation}
As shown in \cite[pp. 125--126]{M:EC}, the cohomology group $\HH^1(X,
\mu_\nu)$ classifies the Galois coverings $Y \rightarrow X$ with
cyclic Galois group $\mathbb Z/\nu$.

The proof of Proposition~\ref{lemma:3.11} utilizes the construction of
the fundamental class $s_{Z/X}$ of a prime divisor $Z$ on $X$.  We
sketch here the results that will be needed.  The reader is referred
to \cite[\S~VI.6]{M:EC} for the details. Let $X$ be a nonsingular
integral variety over $k$ and $Z \subseteq X$ a nonsingular integral
closed subvariety of codimension one. For any sheaf $F$ on $X$ the
long exact sequence of cohomology with supports in $Z$ is
\begin{multline}
  \label{eq:9}
  \dotsm \rightarrow \HH^{i-1}(X,F) \rightarrow \HH^{i-1}(X-Z,F)
  \xrightarrow{\partial^{i-1}} \\
  \HH^{i}_Z(X,F) \rightarrow \HH^{i}(X,F) \rightarrow \HH^{i}(X-Z,F)
  \rightarrow \dotsm \ldotp
\end{multline}
For $F = \mathbb G_m$, the lower degree terms of \eqref{eq:9} are
\begin{multline}
  \label{eq:10}
  1 \rightarrow \mathcal O^\ast(X) \rightarrow \mathcal O^\ast(X-Z)
  \xrightarrow{\partial^0} \\
  \HH^{1}_Z(X,\mathbb G_m) \rightarrow \Pic{(X)} \rightarrow \Pic(X-Z)
  \xrightarrow{\partial^1}  \\
  \HH^{2}_Z(X,\mathbb G_m) \rightarrow \HH^{2}(X,\mathbb G_m)
  \rightarrow \HH^{2}(X-Z,\mathbb G_m) \rightarrow \dotsm \ldotp
\end{multline}
Since $X$ is regular, $\Pic(X) = \Cl(X)$ and by \eqref{eq:101} we know
$\partial^1$ is the zero map. Using \eqref{eq:8} for both $X$ and
$X-Z$ we conclude that $\HH^{2}_Z(X,\mathbb G_m) = \langle 0\rangle$.
Comparing \eqref{eq:10} with \eqref{eq:101} we identify
$\HH^{1}_Z(X,\mathbb G_m)$ with the infinite cyclic group $\mathbb Z
Z$ and see that the generator of $\HH^{1}_Z(X,\mathbb G_m)$ maps to
the divisor class of $Z$ in $\Pic(X)$. The long exact sequence of
cohomology associated to \eqref{eq:7} simplifies to
\begin{equation}
  \label{eq:2}
  \begin{CD}
    {\HH^{1}_Z(X,\mathbb G_m)} @>{\nu}>> {\HH^{1}_Z(X,\mathbb G_m)}
    @>{\partial^1}>> {\HH^{2}_Z(X,\mu_\nu)} @>>>
    {0} \\
    @AA{=}A @AA{=}A
    @AA{=}A\\
    {\mathbb Z} @>{\nu}>> {\mathbb Z} @>>> {\mathbb Z/\nu} @>>> {0}
  \end{CD}
\end{equation}
from which we conclude $\HH^{2}_Z(X,\mu_\nu)$ is cyclic of order
$\nu$.  The generator of $\HH^{2}_Z(X,\mu_\nu)$ corresponding to the
image of $Z$ is called the {\em fundamental class of $Z$ in $X$} and is
denoted $s_{Z/X}$.

\subsection{Motivational examples}
\label{sec:1.3}
\begin{example}
  \label{example:1.4}
  Let $m >1$ and $n > 1$. Let $A = k[x_1, \dots, x_m]$ be the affine
  coordinate ring for $\mathbb A^m$.  In this example we construct an
  affine variety $X$ in $\mathbb A^m$ of degree $n$ such that the
  group $\mathcal O^\ast(X)/k^\ast$ has rank $n-1$.  Let $f_1, \dots,
  f_n$ be linear polynomials in $A$, chosen so that $f = f_1 f_2 \dots
  f_n +1$ is irreducible.  Let $X = Z(f)$, $F_1 = Z(f_1), \dots, F_n =
  Z(f_n)$ be the corresponding affine varieties in $\mathbb A^m$.
  Assume no two of the affine hyperplanes $F_1, \dots, F_n$ are
  disjoint.  Let $\mathbb P^n = \Proj{k[x_0, x_1, \dots, x_m]}$ and
  write $(\,)^\ast$ for homogenization with respect to the variable
  $x_0$.  The projective completions of $X$, $F_1$, \dots, $F_n$ are
  $\bar X = Z(f^\ast)$, $\bar F_1 = Z(f_1^\ast), \dots, \bar F_n =
  Z(f_n^\ast)$.  Let $\bar F_0 = Z(x_0)$ denote the hyperplane at
  infinity.  The complement of $X$ in $\bar X$ is the zero set of
  $x_0, f_1^\ast \dotsm f_n^\ast$.  Let $L_i = \bar F_i \cap \bar X=
  \bar F_i \cap \bar F_0$.  Then each $L_i$ is a hyperplane in $\bar
  F_0$ and $\bar X - X = L_1 +\dots + L_n$.  Using the jacobian
  criterion \cite[Theorem~I.5.1]{H:AG}, one can see that the singular
  locus of $\bar X$ agrees with the singular locus of $L_1 +\dots +
  L_n$.  But $L_1, \dots, L_n$ are distinct hyperplanes in $\bar F_0$,
  by our assumption on $f_1, \dots, f_n$.  Then $\bar X$ is regular in
  codimension one and the Serre criteria
  \cite[Proposition~II.8.23]{H:AG} show $\bar X$ is normal.  In this
  notation, sequence \eqref{eq:101} becomes
  \begin{equation}
    \label{eq:103}
    1
    \rightarrow k^\ast \rightarrow
    \mathcal O^\ast(X)
    \xrightarrow{\Div}
    \bigoplus_{i=1}^n \mathbb Z L_i
    \xrightarrow{\chi}
    \Cl(\bar X)
    \rightarrow
    \Cl(X) \rightarrow 0\ldotp
  \end{equation}
  Let $H$ denote the image of $\chi$, which is the subgroup of
  $\Cl(\bar X)$ generated by the divisors $L_1, \dots, L_n$.  We will
  prove the following.
  \begin{enumerate}[(a)]
  \item $H$ is an infinite group.
  \item $H$ is a homomorphic image of $\mathbb Z \oplus (\mathbb
    Z/n)^{(n-2)}$.
  \item The image of $\Div$ has rank $n-1$.
  \item The elements $f_1, \dots, f_{n-1}$ generate a subgroup of
    finite index in $\mathcal O^\ast(X)/k^\ast$.
  \end{enumerate}
  The free group $\mathbb Z L_i$ maps onto the subgroup of $\Cl(\bar
  X)$ generated by $L_i$. Since $L_i$ has degree one, the degree map
  $D \mapsto \deg(D) L_i$ is a splitting map.  Therefore (a) is true.
  This also shows the image of $\Div$ has rank at most $n-1$.  At the
  generic point of $\bar X$ we have
  \begin{equation}
    \label{eq:29}
    \frac{f_1^\ast}{x_0}\dotsm \frac{f_n^\ast}{x_0} = -1
  \end{equation}
  so the function $f_i^\ast/x_0$ represents a unit in the coordinate
  ring $\mathcal O(X)$.  The diagram
  \begin{equation}
    \label{eq:28}
    \begin{CD}
      1 @>>> \prod_{i=1}^n \left\langle {f_i^\ast}/{x_0} \right\rangle
      @>{\Div}>> {\bigoplus^n_{i=1} \mathbb Z \Div(f_i^\ast/x_0)}
      @>>> 0 \\
      @. @VV{\alpha}V @V{\beta}VV @VVV \\
      1 @>>> \frac{\mathcal O^\ast(X)}{k^\ast} @>{\Div}>>
      {\bigoplus^n_{i=1} \mathbb Z L_i} @>{\chi}>> H @>>> 0
    \end{CD}
  \end{equation}
  commutes, where the second row comes from \eqref{eq:103} and is
  exact.  Since $\bar F_i$ intersects $\bar X$ along $L_i$ with
  intersection multiplicity $n$, we see that the divisor of
  $f_i^\ast/x_0$ on $\bar X$ is
  \begin{equation}
    \label{eq:104}
    \Div(f_i^\ast/x_0) = nL_i - L_1 - \dots - L_n\ldotp
  \end{equation}
  The matrix of the map $\beta$ is:
  \begin{equation}
    \label{eq:26}
    C =
    \begin{bmatrix}
      n-1 & -1 & \dots & -1 & -1 \\
      -1 & n-1 & \dots & -1 & -1\\
      & &         \vdots & & \\
      -1 & -1 & \dots & n-1 & -1 \\
      -1 & -1 & \dots & -1 & n-1
    \end{bmatrix}\ldotp
  \end{equation}
  Using row and column operations, one can compute the invariant
  factors of $C$. They are $1$ and $0$, each with multiplicity one,
  and $n$ with multiplicity $n-2$.  The Snake Lemma
  \cite[Theorem~6.5]{R:IHA} applied to \eqref{eq:28} gives the exact
  sequence
  \begin{equation}
    \label{eq:27}
    0 \rightarrow \Cokernel{\alpha}
    \rightarrow \Cokernel{\beta}
    \rightarrow H \rightarrow 0
  \end{equation}
  of finitely generated abelian groups.  Since $\Cokernel{\beta} \cong
  \mathbb Z\oplus (\mathbb Z/n)^{(n-2)}$, (b) follows from
  \eqref{eq:27}.  Since $H$ is infinite, \eqref{eq:27} implies the
  torsion free rank of $H$ is one.  The exact functor $()\otimes_Z
  \mathbb Q$ applied to the bottom row of \eqref{eq:28} gives (c).
  The exact functor $()\otimes_Z \mathbb Q$ applied to \eqref{eq:27}
  shows $\Cokernel{\alpha}$ is finite. Together with \eqref{eq:29},
  this gives (d).
  
  We have shown that $\mathcal O^\ast(X)/k^\ast$ is a torsion-free
  abelian group of rank $n-1$ and the elements $f_1, \dots, f_{n-1}$
  generate a subgroup of finite index.  We ask whether the elements
  $f_1, \dots, f_{n-1}$ generate the group $\mathcal
  O^\ast(X)/k^\ast$.  We do not know the general answer to this
  question.  Examples~\ref{example:1.5}, 
  \ref{example:1.9}, \ref{example:1.10}, \ref{example:1.6},
  \ref{example:1.11} prove that
  the answer is yes for some particular cases.  Another affirmative
  answer is given in Example~\ref{example:3.14}, where we impose the
  additional hypotheses on $X$ that $\bar X \rightarrow \mathbb
  P^{m-1}$ is cyclic, and the ramification divisor is $\bar X -X$, the
  divisor at infinity.
\end{example}
\begin{example}
  \label{example:1.5}
  In the notation of Example~\ref{example:1.4}, let $f_i = x_i$. If $n
  \leq m$, and $f = x_1 \dotsm x_n + 1$, then in $\mathcal O(X)$ we
  have $x_1 = -x_2^{-1} \dotsm x_{n}^{-1}$.  The isomorphism
  \[
  \mathcal O(X) = \frac{k[x_1, \dots, x_m]}{(x_1 \dotsm x_n + 1)}
  \cong k[x_2, \dots, x_{n}][x_2^{-1}, \dots, x_{n}^{-1}]
  \]
  is defined by eliminating $x_1$.  By Lemma~\ref{lemma:1.3},
  $\mathcal O^\ast(X)$ is equal to $k^\ast\times\langle f_2\rangle
  \times\dotsm \times \langle f_{n}\rangle$ which by symmetry is equal
  to $k^\ast\times\langle f_1\rangle \times\dotsm \times \langle
  f_{n-1}\rangle$.
\end{example}
\begin{example}
  \label{example:1.9}
  In the notation of Example~\ref{example:1.4}, assume $n = m$.
  Moreover, assume $f_1, \dots, f_n$ are linear polynomials in $k[x_1,
  \dots, x_n]$ such that the hyperplanes $F_1 = Z(f_1), \dots, F_n=
  Z(f_n)$ in $\mathbb A^n$ are in general position.  After an affine
  change of coordinates \cite[\S~2.3, p. 40]{MR0313252}, we reduce to
  the case of Example~\ref{example:1.5}.  Therefore, $f_1, \dots,
  f_{n-1}$ make up a free basis for the $\mathbb Z$-module $\mathcal
  O^\ast(X)/k^\ast$.
\end{example}
\begin{example}
  \label{example:1.10}
  In the notation of Example~\ref{example:1.4}, suppose $n=2$ and $m
  \geq 2$. Let $f_1$, $f_2$ be linear polynomials in $k[x_1, x_2,
  \dots, x_m]$ such that the hyperplanes $F_1 = Z(f_1)$, and $F_2=
  Z(f_2)$ in $\mathbb A^n$ have nontrivial intersection.  Let $X$ be
  the hypersurface in $\mathbb A^m$ defined by $f_1f_2+1 = 0$. After
  an affine change of coordinates, we can assume $f_1 = x_1$, $f_2=
  x_2$. Therefore $\mathcal O (X)$ is isomorphic to $R[x_3, \dots,
  x_m]$, where $R = k[x_1, x_2]/(x_1x_2+1)$. By
  Example~\ref{example:1.9}, the group $\mathcal O^\ast(X)$ is equal
  to $k^\ast\times\langle f_1\rangle$.
\end{example}
\begin{example}
  \label{example:1.6}
  In the notation of Example~\ref{example:1.4}, suppose $n=3$ and
  $m=2$. Let $f_1$, $f_2$, $f_3$ be linear polynomials in $k[x_1,
  x_2]$ defining three lines $F_1 = Z(f_1)$, $F_2=Z(f_2)$, $F_3=
  Z(f_3)$ in $\mathbb A^2$, no two of which are parallel.  Let $X$ be
  the affine curve in $\mathbb A^2$ defined by $f_1f_2f_3 +1 = 0$.  In
  this case, $\bar X$ is the nonsingular cubic curve in $\mathbb P^2$
  defined by $f_1^\ast f_2^\ast f_3^\ast +x_0^3 = 0$.  The three
  points on $\bar X$ where $x_0=0$ are denoted $L_1$, $L_2$, $L_3$.
  They generate the group $H$ in \eqref{eq:28}.  Notice that
  \eqref{eq:104} implies $H$ is generated by $L_3$ and $L_1-L_3$.
  Since the genus of $\bar X$ is one, $L_1-L_3$ is not principal
  \cite[Example~II.6.10.1]{H:AG}.  Therefore, $H$ is equal to the
  internal direct sum $\mathbb Z L_3 \oplus (\mathbb Z/3) (L_1 -
  L_3)$.  It follows that $\Cokernel{\alpha} = \langle 0\rangle$ in
  \eqref{eq:27}.  This implies the group $\mathcal O^\ast(X)$ is equal
  to $k^\ast\times\langle f_1\rangle \times\langle f_2\rangle$.
\end{example}
\begin{example}
  \label{example:1.11}
  In the notation of Example~\ref{example:1.4}, suppose $n=3$ and $m
  \geq 2$. Then the group $\mathcal O^\ast(X)$ is equal to
  $k^\ast\times\langle f_1\rangle \times\langle f_2\rangle$. This is
  because by an affine change of coordinates we can reduce to the case
  where $\mathcal O (X)$ is a polynomial ring over either the ring in
  Example~\ref{example:1.9}, or the ring in Example~\ref{example:1.6}.
\end{example}
\begin{example}
  \label{example:1.7}
  A class of curves that have been widely studied are the affine
  Fermat curves $F = Z(x^n + y^n - 1)$ in $\mathbb A^2$, where $n \geq
  2$. Assume $n$ is invertible in $k$. Since $k$ is algebraically
  closed, $x^n + y^n$ factors in $k[x,y]$ into a product of $n$
  distinct linear forms.  Then $F$ can be viewed as $Z(f_1\dotsm f_n
  +1)$ and the argument in Example~\ref{example:1.4} shows that the
  group $\mathcal O^\ast(F)/k^\ast$ is free of rank ${n-1}$.  Let
  $\bar F$ denote the projective completion of $F$, and $X$ the affine
  open subset of $F$ where $x y \neq 0$.  Let $H$ denote the subgroup
  of $\Cl(\bar F)$ generated by the $3 n$ points in $\bar F - X$.  If
  $H_0$ is the kernel of the degree homomorphism, then
  \begin{equation}
    \label{eq:106}
    0 \rightarrow H_0 \rightarrow H \xrightarrow{\deg}
    \mathbb Z \rightarrow 0
  \end{equation}
  is a split exact sequence, because a point has degree one.  As shown
  in \cite{MR563965}, $H_0$ is a finite group which is annihilated by
  $n$.  Using Lemma~\ref{lemma:1.1} we find that $\mathcal
  O^\ast(X)/k^\ast$ has rank $3n-1$.  A basis for $\mathcal
  O^\ast(X)/k^\ast$ is computed in \cite{MR0441978}.
\end{example}
\section{A cyclic cover of affine space}
\label{sec:2}
In this section we consider an affine variety $X$ which is a finite
cyclic cover of $\mathbb A^m$.  It will be helpful to set up some
notation.  Let $A = k[x_1, \dots, x_m]$ be the coordinate ring of
$\mathbb A^m$.  Let $f$ be a non-invertible square-free element of
$A$.  Assume $n \geq 2$ is invertible in $k$ and $\zeta$ is a
primitive $n$th root of unity in $k$.  Let $T = A[z]/(z^n - f)$ denote
$\mathcal O(X)$, the coordinate ring of the affine variety $X$.  Then
$T$ is a ramified cyclic extension of $A$, and the associated morphism
on varieties is $\pi: X \rightarrow \mathbb A^m$.  Let $f = f_1 \dots
f_\nu$ be a factorization of $f$ into irreducibles in $A$.  By
Eisenstein's criterion, for instance, $T$ is an integral domain.  The
singular locus of $X$ corresponds to the singular locus of the affine
hypersurface $F = Z(f) \subseteq \mathbb A^m$.  Hence $X$ is regular
in codimension one. By the Serre criteria, $X$ is normal, and $T$ is
an integrally closed integral domain.  The assignment $\sigma (z) =
\zeta z$ defines an $A$-algebra automorphism of $T$.  If $G = \langle
\sigma\rangle$, then $T^G = A$.  The morphism $\pi$ ramifies only
above the divisor $F$.  If we let $R = A[f^{-1}]$ and $S = T[z^{-1}]$,
then $S$ is a Galois extension of $R$ with cyclic group $G$.  The
rings defined so far make up this commutative diagram
\begin{equation}
  \label{eq:107}
  \begin{CD}
    T=A[\sqrt[n]{f}] @>>>   S=R[\sqrt[n]{f}] \\
    @AAA @AAA \\
    A @>>> R = A[f^{-1}]
  \end{CD}
\end{equation}
where an arrow represents set inclusion.  The norm
\begin{equation}
  \label{eq:108}
  N : T^\ast \rightarrow k^\ast  
\end{equation}
is a homomorphism of abelian groups, defined by $N(u) = u \sigma(u)
\dotsm \sigma^{n-1}(u)$.  If $a \in k^\ast$, then $N(a) = a^n$ because
$k$ is a subfield of $A$. Since $k$ is algebraically closed, $N$ is
onto.
\subsection{The group of units on a cyclic cover of affine space}
\label{sec:2.1}
The notation established in the preceding paragraph is used throughout
this section.
\begin{proposition}
  \label{prop:2.1}
  In the above context, the following are true.
  \begin{enumerate}[(a)]
  \item $\displaystyle{\HH^i(G,k^\ast) =
      \begin{cases}
        k^\ast
        & \text{if $i=0$}\\
        \mu_n & \text{if $i=1, 3, 5, \dots$}\\
        \langle 1\rangle & \text{if $i=2, 4, 6, \dots$}
      \end{cases}}$
  \item $\displaystyle{\HH^i(G,R^\ast) =
      \begin{cases}
        R^\ast = k^\ast \times \langle f_1\rangle \times \dotsm \times
        \langle f_\nu\rangle
        & \text{if $i=0$}\\
        \mu_n & \text{if $i=1, 3, 5, \dots$}\\
        \frac{\langle f_1\rangle}{\langle f_1^n\rangle} \times \dotsm
        \times \frac{\langle f_\nu\rangle}{\langle f_\nu^n\rangle} &
        \text{if $i=2, 4, 6, \dots$}
      \end{cases}}$
  \item $\displaystyle{\HH^i(G,R^\ast/k^\ast) =
      \begin{cases}
        R^\ast/k^\ast = \langle f_1\rangle \times \dotsm \times
        \langle f_\nu\rangle
        & \text{if $i=0$}\\
        \langle 1\rangle &
        \text{if $i=1, 3, 5, \dots$}\\
        \frac{\langle f_1\rangle}{\langle f_1^n\rangle} \times \dotsm
        \times \frac{\langle f_\nu\rangle}{\langle f_\nu^n\rangle} &
        \text{if $i=2, 4, 6, \dots$}
      \end{cases}}$
  \end{enumerate}
\end{proposition}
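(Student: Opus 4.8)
The plan is to observe that in all three parts the $G$-module in question carries the trivial $G$-action, so that everything reduces to Theorem~\ref{th:1.4} applied to a trivial module, combined with the structure of $R^\ast$ from Lemma~\ref{lemma:1.3}. First I would verify triviality of the action: since $\sigma$ is an $A$-algebra automorphism of $T$ with $\sigma(z)=\zeta z$, it restricts to the identity on $A$ and hence fixes $R = A[f^{-1}]$ pointwise; therefore $G$ acts trivially on $R^\ast$, on $k^\ast \subseteq R^\ast$, and on the quotient $R^\ast/k^\ast$. For a trivial $G$-module $M$, written multiplicatively, the element $D=\sigma-1$ of $\mathbb Z G$ acts as the constant map to $1$, so $DM=\langle 1\rangle$, while $N = 1+\sigma+\dots+\sigma^{n-1}$ acts as the $n$th power map $u\mapsto u^n$; thus ${}_N M = \{u\in M\mid u^n=1\}$. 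Feeding this into Theorem~\ref{th:1.4} gives $\HH^0(G,M)=M$, $\HH^{2i-1}(G,M) = \{u\in M\mid u^n=1\}$, and $\HH^{2i}(G,M) = M/M^n$ for all $i\geq 1$.

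Part (a) is then immediate from two facts about the multiplicative group of an algebraically closed field in which $n$ is invertible: the $n$th power map $k^\ast\to k^\ast$ is surjective, so $k^\ast/(k^\ast)^n = \langle 1\rangle$; and its kernel is exactly the group $\mu_n$ of $n$th roots of unity, which has order $n$. Hence $\HH^{2i-1}(G,k^\ast)=\mu_n$ and $\HH^{2i}(G,k^\ast)=\langle 1\rangle$ for $i\geq 1$, while $\HH^0(G,k^\ast)=k^\ast$.

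For (b) and (c) I would insert the decomposition $R^\ast = k^\ast\times\langle f_1\rangle\times\dots\times\langle f_\nu\rangle$ of Lemma~\ref{lemma:1.3}, in which each $\langle f_j\rangle$ is infinite cyclic. Forming $n$-torsion commutes with finite direct products, and the infinite cyclic factors are torsion free, so $\{u\in R^\ast\mid u^n=1\} = \{u\in k^\ast\mid u^n=1\} = \mu_n$, which gives the odd-degree groups in (b). Likewise, passing to the quotient by $n$th powers commutes with the product, and $(k^\ast)^n=k^\ast$ while $\langle f_j\rangle^n = \langle f_j^n\rangle$, so $R^\ast/(R^\ast)^n \cong \langle f_1\rangle/\langle f_1^n\rangle\times\dots\times\langle f_\nu\rangle/\langle f_\nu^n\rangle$, the even-degree groups in (b); and $\HH^0$ is $R^\ast$ itself. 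For (c), $R^\ast/k^\ast\cong\langle f_1\rangle\times\dots\times\langle f_\nu\rangle$ is a free $\mathbb Z$-module, so it has no $n$-torsion and the odd-degree cohomology vanishes, while its quotient by $n$th powers is again $\langle f_1\rangle/\langle f_1^n\rangle\times\dots\times\langle f_\nu\rangle/\langle f_\nu^n\rangle$ and $\HH^0$ equals $R^\ast/k^\ast$.

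There is no genuinely hard step here; once the trivial-action observation is in place the argument is bookkeeping. The only points deserving care are that $\sigma$ really does fix $R$ (so that the three modules are trivial $G$-modules), the two standard facts about $k^\ast$ used in part (a), and the internal consistency of the stated answers — note that $\langle f_j^n\rangle$ is precisely the image of $\langle f_j\rangle$ under the $n$th power map, so the even-degree groups in (b) and (c) coincide exactly as written.
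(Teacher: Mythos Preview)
Your proposal is correct and follows essentially the same approach as the paper: the paper's proof simply records the exact sequence $1\to k^\ast\to R^\ast\to R^\ast/k^\ast\to 1$ of \emph{trivial} $G$-modules and then says ``Use Lemma~\ref{lemma:1.3} and Theorem~\ref{th:1.4}.'' Your write-up is a careful expansion of exactly this, computing each cohomology group directly from Theorem~\ref{th:1.4} once the action is seen to be trivial and $R^\ast$ is decomposed via Lemma~\ref{lemma:1.3}.
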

\begin{proof}
  The sequence of trivial $G$-modules
  \begin{equation}
    \label{eq:109}
    1 \rightarrow k^\ast  \rightarrow R^\ast \rightarrow R^\ast/k^\ast
    \rightarrow 1
  \end{equation}
  is exact. Use Lemma~\ref{lemma:1.3} and Theorem~\ref{th:1.4}.
\end{proof}
\begin{lemma}
  \label{lemma:2.2}
  In the context of Section~\ref{sec:2.1}, let $H$ be a subgroup of
  $G$ of order $h$.  If $(T^\ast)^H = k^\ast$, then the following are
  true.
  \begin{enumerate}[(a)]
  \item $(T^\ast/k^\ast)^H = \langle 1\rangle$.
  \item $\HH^{2i}(H,T^\ast/k^\ast) = \langle 1\rangle$, for all $i
    >0$.
  \item There is an exact sequence
    \[
    1 \rightarrow \mu_h \rightarrow \HH^{2i-1}(H,T^\ast) \rightarrow
    \HH^{2i-1}(H, T^\ast/k^\ast) \rightarrow 1
    \]
    for all $i >0$.
  \end{enumerate}
\end{lemma}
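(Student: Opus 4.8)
The plan is to restrict the exact sequence of $G$-modules \eqref{eq:109} to the subgroup $H$ and run the associated long exact cohomology sequence, reading off (a), (b), (c) from three consecutive segments. The one preliminary computation I would make is the cohomology of the trivial $H$-module $k^\ast$: since $k\subseteq A = T^G\subseteq T^H$, the group $k^\ast$ is a trivial $H$-module, and since $k$ is algebraically closed the $h$th power map is surjective on $k^\ast$ with kernel $\mu_h$. Exactly as in Proposition~\ref{prop:2.1}(a), but with $n$ replaced by $h=|H|$, Theorem~\ref{th:1.4} then gives $\HH^0(H,k^\ast)=k^\ast$, $\HH^{2i-1}(H,k^\ast)=\mu_h$, and $\HH^{2i}(H,k^\ast)=\langle 1\rangle$ for all $i\geq 1$.

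For (a): the initial segment of the long exact sequence is
\[
1\rightarrow \HH^0(H,k^\ast)\rightarrow (T^\ast)^H\rightarrow (T^\ast/k^\ast)^H\rightarrow \HH^1(H,k^\ast)=\mu_h .
\]
The hypothesis $(T^\ast)^H=k^\ast=\HH^0(H,k^\ast)$ forces the first arrow to be an isomorphism, so the connecting map embeds $(T^\ast/k^\ast)^H$ into the finite group $\mu_h$. But $X=\Spec T$ is a normal affine variety, so by Lemma~\ref{lemma:1.1} the group $\mathcal O^\ast(X)/k^\ast = T^\ast/k^\ast$ is torsion free; hence its subgroup $(T^\ast/k^\ast)^H$ is simultaneously torsion free and finite, and therefore trivial. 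This proves (a).

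Parts (b) and (c) are then formal. By Theorem~\ref{th:1.4}, for every $i\geq 1$ we have $\HH^{2i}(H,T^\ast/k^\ast)=(T^\ast/k^\ast)^H / N(T^\ast/k^\ast)$, whose numerator already vanishes by (a); this is (b). For (c), consider the segment
\[
\HH^{2i-2}(H,T^\ast/k^\ast)\rightarrow \HH^{2i-1}(H,k^\ast)\rightarrow \HH^{2i-1}(H,T^\ast)\rightarrow \HH^{2i-1}(H,T^\ast/k^\ast)\rightarrow \HH^{2i}(H,k^\ast)
\]
for $i\geq 1$. Its left-hand term equals $(T^\ast/k^\ast)^H$ when $i=1$, trivial by (a), and is trivial when $i\geq 2$ by (b); its right-hand term $\HH^{2i}(H,k^\ast)$ is trivial by the preliminary computation. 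Inserting $\HH^{2i-1}(H,k^\ast)=\mu_h$ and deleting the trivial outer terms leaves the asserted exact sequence $1\rightarrow \mu_h\rightarrow \HH^{2i-1}(H,T^\ast)\rightarrow \HH^{2i-1}(H,T^\ast/k^\ast)\rightarrow 1$.

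The only step that is not diagram chasing is (a), and there the decisive input is the torsion freeness of $\mathcal O^\ast(X)/k^\ast$ supplied by Lemma~\ref{lemma:1.1}: that is what rules out a nontrivial $\mu_h$-obstruction living in $(T^\ast/k^\ast)^H$. Everything else reduces to the vanishing of the positive even-degree cohomology of the trivial module $k^\ast$, which rests only on $k$ being algebraically closed.
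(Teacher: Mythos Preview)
Your argument is correct and is essentially the same as the paper's: both run the long exact cohomology sequence for $1\to k^\ast\to T^\ast\to T^\ast/k^\ast\to 1$, use Lemma~\ref{lemma:1.1} to force $(T^\ast/k^\ast)^H$ to be trivial, and then deduce (b) and (c) from Theorem~\ref{th:1.4} and the vanishing of $\HH^{2i}(H,k^\ast)$. One small slip: the sequence you want is \eqref{eq:110}, not \eqref{eq:109} (which involves $R^\ast$ rather than $T^\ast$); your subsequent computations make clear you are working with the correct one.
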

\begin{proof}
  Begin with the exact sequence
  \begin{equation}
    \label{eq:110}
    1 \rightarrow k^\ast
    \rightarrow T^\ast
    \rightarrow T^\ast/k^\ast
    \rightarrow 1\ldotp
  \end{equation}
  By Lemma~\ref{lemma:1.1}, $T^\ast/k^\ast$ is a finitely generated
  torsion free $\mathbb Z$-module.  The long exact sequence associated
  to \eqref{eq:110} is
  \begin{multline}
    \label{eq:111}
    1 \rightarrow k^\ast \xrightarrow{\alpha^0} (T^\ast)^H \rightarrow
    (T^\ast/k^\ast)^H
    \xrightarrow{\partial^0} \\
    \HH^1(H,k^\ast) \xrightarrow{\alpha^1} \HH^1(H,T^\ast) \rightarrow
    \HH^1(H, T^\ast/k^\ast)
    \xrightarrow{\partial^1} \\
    \HH^2(H,k^\ast) \xrightarrow{\alpha^2} \HH^2(H,T^\ast) \rightarrow
    \HH^2(H, T^\ast/k^\ast) \xrightarrow{\partial^2} \dots
  \end{multline}
  By hypothesis, $\alpha^0$ is an isomorphism, so $\partial^0$ is
  one-to-one.  Since $G$ acts trivially on $k^\ast$, $\HH^1(H,k^\ast)
  = \mu_h$, a finite group.  Because $(T^\ast/k^\ast)^H$ is a subgroup
  of the torsion free group $T^\ast/k^\ast$, we conclude that (a) is
  true.  Because $H$ is a cyclic group, part (b) follows from
  Theorem~\ref{th:1.4} and part (a).  By
  Proposition~\ref{prop:2.1}(a), for $i > 0$, $\HH^{2i}(H,k^\ast) =
  \langle 1 \rangle$. Part (c) follows from \eqref{eq:111}, part (a)
  and periodicity.
\end{proof}
\begin{theorem}
  \label{th:2.3}
  In the context of Section~\ref{sec:2.1}, assume $n= p$ is a prime
  number.  There is an isomorphism of $\mathbb Z[G]$-modules
  $T^\ast/k^\ast \cong A_1 \oplus \dotsm \oplus A_t$ where $A_1,
  \dots, A_t$ are $\mathbb Z[\zeta]$-ideals in $\mathbb Q[\zeta]$. It
  follows that $T^\ast/k^\ast$ is a free $\mathbb Z$-module of rank
  $(p-1) t$ and there are isomorphisms
  \[
  \begin{split}
    \HH^1(G,T^\ast) & \cong \mu_p \times  (\mathbb Z/p)^{(t)}\\
    \HH^1(G,T^\ast/k^\ast) & \cong (\mathbb Z/p)^{(t)}
  \end{split}
  \]
\end{theorem}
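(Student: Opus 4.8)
The plan is to exploit the fact that $T^\ast/k^\ast$ is a finitely generated torsion-free $\mathbb Z$-module (Lemma~\ref{lemma:1.1}) carrying an action of the cyclic group $G = \langle\sigma\rangle$ of prime order $p$, so that it becomes a module over the group ring $\mathbb Z[G] \cong \mathbb Z[x]/(x^p-1)$. The key structural input is that $\mathbb Z[G]$ sits in a pullback (conductor-square) arrangement with $\mathbb Z$ (the quotient by the augmentation-like factor $x-1$) and $\mathbb Z[\zeta] \cong \mathbb Z[x]/(\Phi_p(x))$ (where $\Phi_p$ is the $p$th cyclotomic polynomial), glued along $\mathbb Z/p$. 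A torsion-free $\mathbb Z[G]$-module decomposes up to isomorphism into indecomposables, and by the classical theory of integral representations of $\mathbb Z[\mathbb Z/p]$ (Diederichsen--Reiner) each indecomposable is either $\mathbb Z$ (trivial action), $\mathbb Z[\zeta]$-lattices (equivalently, fractional $\mathbb Z[\zeta]$-ideals in $\mathbb Q[\zeta]$), or the gluing-type modules that are nontrivial extensions of one by the other. First I would invoke the hypothesis that $(T^\ast)^G = k^\ast$, i.e.\ $(T^\ast/k^\ast)^G = \langle 1\rangle$ by Lemma~\ref{lemma:2.2}(a): this says that $T^\ast/k^\ast$ has \emph{no} fixed points, which rules out both the trivial summand $\mathbb Z$ and the gluing-type summands (each of which has nonzero $G$-fixed submodule — the image of $N$ or the kernel of $D$ detects it). Hence $T^\ast/k^\ast \cong A_1 \oplus \dotsm \oplus A_t$ with each $A_j$ a fractional $\mathbb Z[\zeta]$-ideal in $\mathbb Q[\zeta]$.

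Once the decomposition is in hand, the rank statement is immediate: each $A_j$ has $\mathbb Z$-rank $[\mathbb Q[\zeta]:\mathbb Q] = p-1$, so $T^\ast/k^\ast$ is free of rank $(p-1)t$. For the cohomology computations I would compute $\HH^\ast(G, A_j)$ for a single fractional ideal $A$ and then take the direct sum. Using Theorem~\ref{th:1.4}, $\HH^1(G,A) = {}_N A / DA$ and $\HH^2(G,A) = A^G/NA$. Since $A \subseteq \mathbb Q[\zeta]$ and $G$ acts via multiplication by $\zeta$, one has $A^G = 0$ (no nonzero element of $\mathbb Q[\zeta]$ is fixed by multiplication by a primitive root of unity), hence ${}_N A = A$ because $N$ corresponds to multiplication by $1 + \zeta + \dots + \zeta^{p-1} = 0$; and $DA = (\zeta-1)A$. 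Thus $\HH^1(G,A) = A/(\zeta-1)A \cong \mathbb Z[\zeta]/(\zeta-1)\mathbb Z[\zeta] \cong \mathbb Z/p$, using that $(\zeta-1)$ is the unique prime of $\mathbb Z[\zeta]$ above $p$ with residue field $\mathbb F_p$, and that this quotient is insensitive to replacing $\mathbb Z[\zeta]$ by any fractional ideal in the same localization at $p$. Summing over $j$ gives $\HH^1(G, T^\ast/k^\ast) \cong (\mathbb Z/p)^{(t)}$.

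Finally, for $\HH^1(G,T^\ast)$ I would feed this into Lemma~\ref{lemma:2.2}(c) with $H=G$ and $i=1$: there is a short exact sequence
\[
1 \rightarrow \mu_p \rightarrow \HH^1(G,T^\ast) \rightarrow \HH^1(G,T^\ast/k^\ast) \rightarrow 1,
\]
so $\HH^1(G,T^\ast)$ is an extension of $(\mathbb Z/p)^{(t)}$ by $\mu_p$. Since every group in sight is killed by $p$, the extension splits (it is an extension of $\mathbb F_p$-vector spaces), giving $\HH^1(G,T^\ast) \cong \mu_p \times (\mathbb Z/p)^{(t)}$. The main obstacle I anticipate is the first step: carefully justifying the Diederichsen--Reiner classification of integral $\mathbb Z[\mathbb Z/p]$-lattices and, in particular, verifying that the no-fixed-points hypothesis genuinely excludes the trivial summand and the two genera of gluing modules (one must check that those modules each contribute a nonzero $G$-invariant, which amounts to a short computation with $N$ and $D$ on $\mathbb Z$ and on the amalgamated-sum lattices). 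Everything after the structure theorem is routine cohomology of a cyclic group together with the arithmetic of the ramified prime $(\zeta-1)$ in $\mathbb Z[\zeta]$.
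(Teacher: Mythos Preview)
Your proposal is correct and follows essentially the same approach as the paper: the paper's proof simply cites Lemma~\ref{lemma:2.2} together with the Diederichsen--Reiner structure theory for $\mathbb Z[G]$-lattices (referenced there as \cite[Theorem~(74.3)]{CR:RTF}), which is exactly the classification you invoke. Your write-up spells out the details---excluding the trivial and gluing-type indecomposables via the vanishing of $G$-invariants, computing $\HH^1(G,A_j)\cong\mathbb Z/p$ for each ideal summand, and splitting the extension from Lemma~\ref{lemma:2.2}(c)---but the underlying argument is the same.
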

\begin{proof}
  Since $(T^\ast)^G = k^\ast$, this follows from Lemma~\ref{lemma:2.2}
  and the structure theory for a module over a cyclic group of order
  $p$, \cite[Theorem~(74.3)]{CR:RTF}. A similar argument is given in
  \cite[Proposition~2.17]{F:Bgrccs}.
\end{proof}
\begin{example}
  \label{example:2.4}
  When $\nu > 1$, the group of units of $T$ can be trivial.  For
  instance, let $f = (xy-1) (x-\alpha_1) \dotsm (x-\alpha_d)$, where
  $\alpha_1, \dots, \alpha_d$ are distinct elements in $k^\ast$.  As
  computed in \cite[\S 3.3]{F:rBgadp}, the group of units in
  $T=A[z]/(z^2 -f)$ is $k^\ast$.
\end{example}
The group $G$ acts as a group of automorphisms of $\Cl(T)$.  This
action is induced on the group of divisors $\bigDiv(T)$ by sending a
height one prime ideal $I$ to its conjugate $\sigma(I)$.  By this same
action, $G$ acts as a group of automorphisms of $\Pic(S)$.  Because
$\Spec{S}$ is nonsingular, $\Pic{(S)} = \Cl{(S)}$.  Associated to the
Galois extension $S/R$ is
\begin{multline}
  \label{eq:112}
  1 \rightarrow \HH^{1}(G,{S^\ast}) \xrightarrow{\alpha_1} \Pic(R)
  \xrightarrow{\alpha_2} \left(\Pic{S}\right)^G
  \xrightarrow{\alpha_3} \\
  \HH^2(G, S^\ast) \xrightarrow{\alpha_4} \B(S/R)
  \xrightarrow{\alpha_5} \HH^{1}(G,\Pic{S}) \xrightarrow{\alpha_6}
  \HH^{3}(G,{S^\ast})
\end{multline}
the so-called Chase-Harrison-Rosenberg exact sequence of cohomology
groups \cite[Corollary~5.5]{CHR:GtGccr}.  The group $\B(S/R)$
appearing in \eqref{eq:112} is the kernel of the natural map on Brauer
groups $\B(R) \rightarrow \B(S)$. In this article it plays only a
peripheral role.  Since $\Pic{R} = 0$, sequence \eqref{eq:112} and
periodicity implies
\begin{equation}
  \label{eq:113}
  \HH^i(G, S^\ast) = \langle 1\rangle
\end{equation}
if $i > 0$ is odd.
\begin{proposition}
  \label{prop:2.5}
  In the context of Section~\ref{sec:2.1}, the following are true.
  \begin{enumerate}[(a)]
  \item If $i \geq 0$ is even, then
    \[
    \HH^i(G,S^\ast/R^\ast) = (S^\ast/R^\ast)^G \cong \mathbb Z/n
    \]
    is generated by the coset containing $z$.
  \item If $i > 0$ is odd, there is an exact sequence
    \[
    1 \rightarrow \HH^i(G, S^\ast/R^\ast) \rightarrow \HH^{i+1}(G,
    R^\ast) \rightarrow \HH^{i+1}(G, S^\ast) \rightarrow 1
    \]
    of $\mathbb Z/n$-modules.
  \end{enumerate}
\end{proposition}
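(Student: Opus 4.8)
\emph{Proof proposal.} The plan is to feed the short exact sequence of $G$-modules
\[
1 \longrightarrow R^\ast \longrightarrow S^\ast \longrightarrow S^\ast/R^\ast \longrightarrow 1
\]
into the long exact cohomology sequence of the cyclic group $G$, and to exploit three inputs: that $S/R$ is Galois with group $G$, so $S^G = R$ and hence $(S^\ast)^G = R^\ast$; the vanishing $\HH^i(G, S^\ast) = \langle 1\rangle$ for odd $i>0$ recorded in \eqref{eq:113}; and the computation $\HH^j(G, R^\ast) = \mu_n$ for odd $j$ from Proposition~\ref{prop:2.1}(b), where $\mu_n$ has order $n$ because $\zeta \in k$.

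For part (a) I would first observe that the norm $N\colon S^\ast \to S^\ast$, $N(u) = u\,\sigma(u)\cdots \sigma^{n-1}(u)$, takes values in $(S^\ast)^G = R^\ast$, so the induced endomorphism of $S^\ast/R^\ast$ is trivial. By Theorem~\ref{th:1.4} and periodicity this gives $\HH^{2i}(G, S^\ast/R^\ast) = (S^\ast/R^\ast)^G$ for every $i \geq 0$, reducing (a) to identifying $(S^\ast/R^\ast)^G$. Since $\sigma(z) = \zeta z$ with $\zeta \in R^\ast$, the class $\bar z$ is $G$-fixed. Conversely, for any $G$-fixed class $\bar u$ the element $r = \sigma(u)\,u^{-1}$ lies in $R^\ast$ and, being $G$-fixed, satisfies $\sigma^j(u) = r^j u$ and hence $r^n = 1$, so $r \in \mu_n$. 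The resulting homomorphism $(S^\ast/R^\ast)^G \to \mu_n$, $\bar u \mapsto \sigma(u)u^{-1}$, is injective, since $\sigma(u) = u$ forces $u \in (S^\ast)^G = R^\ast$; and it carries $\bar z$ to the generator $\zeta$, hence is onto. Thus $(S^\ast/R^\ast)^G$ is cyclic of order $n$ generated by $\bar z$. (Equivalently, the low-degree part of the long exact sequence collapses to $R^\ast \xrightarrow{=} R^\ast \to (S^\ast/R^\ast)^G \xrightarrow{\partial^0} \HH^1(G, R^\ast) \to \HH^1(G, S^\ast) = \langle 1\rangle$ with $\HH^1(G,R^\ast) = \mu_n$, and $\partial^0(\bar z)$ is the cocycle $\sigma \mapsto \zeta$.)

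For part (b), fix odd $i>0$ and examine the stretch of the long exact sequence from $\HH^i(G, S^\ast)$ through $\HH^{i+2}(G, S^\ast)$. Since $i$ and $i+2$ are odd, \eqref{eq:113} makes $\HH^i(G, S^\ast)$ and $\HH^{i+2}(G, S^\ast)$ trivial; the first vanishing makes $\partial^i\colon \HH^i(G, S^\ast/R^\ast) \to \HH^{i+1}(G, R^\ast)$ injective, and the second makes $\partial^{i+1}\colon \HH^{i+1}(G, S^\ast/R^\ast) \to \HH^{i+2}(G, R^\ast)$ surjective. By part (a) the source of $\partial^{i+1}$ is cyclic of order $n$ (here $i+1$ is even), and by Proposition~\ref{prop:2.1}(b) its target $\mu_n$ also has order $n$; a surjection between finite groups of equal order is an isomorphism, so $\partial^{i+1}$ is injective, which forces $\HH^{i+1}(G, R^\ast) \to \HH^{i+1}(G, S^\ast)$ to be onto. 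Exactness then yields the asserted $1 \to \HH^i(G, S^\ast/R^\ast) \to \HH^{i+1}(G, R^\ast) \to \HH^{i+1}(G, S^\ast) \to 1$, and since every term is annihilated by $n = |G|$ and every map is $\mathbb Z[G]$-linear, it is a sequence of $\mathbb Z/n$-modules.

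None of this is hard once Proposition~\ref{prop:2.1} and \eqref{eq:113} are in place; the only step I expect to require a little care is the order count in part (b) that promotes $\partial^{i+1}$ from a surjection to an isomorphism. That count is precisely the statement that $(S^\ast/R^\ast)^G$ has order exactly $n$ and no larger, and it is where one genuinely uses both the structure of $R^\ast$ from Lemma~\ref{lemma:1.3} (so that its $n$-torsion is exactly $\mu_n$) and the Galois hypothesis on $S/R$ (so that $(S^\ast)^G = R^\ast$).
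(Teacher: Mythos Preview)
Your argument is correct and follows essentially the same route as the paper: both proofs feed the short exact sequence $1\to R^\ast\to S^\ast\to S^\ast/R^\ast\to 1$ into the long exact cohomology sequence, use $(S^\ast)^G=R^\ast$ together with \eqref{eq:113} to make $\partial^0$ an isomorphism onto $\HH^1(G,R^\ast)=\mu_n$, observe that the norm is trivial on $S^\ast/R^\ast$ so that even-degree cohomology equals $(S^\ast/R^\ast)^G$, and then use the order count to promote $\partial^{i+1}$ (for even $i+1$) from a surjection to an isomorphism, yielding part~(b). Your explicit description of $\partial^0$ as $\bar u\mapsto \sigma(u)u^{-1}$ is just the paper's connecting map made visible, and your derivation of the order of $\bar z$ via $\bar z\mapsto\zeta$ replaces the paper's appeal to the minimal polynomial of $z$; otherwise the two proofs coincide.
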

\begin{proof}
  The exact sequence of $G$-modules
  \begin{equation}
    \label{eq:114}
    1 \rightarrow R^\ast
    \rightarrow S^\ast
    \rightarrow S^\ast/R^\ast \rightarrow 1
  \end{equation}
  gives rise to the exact sequence of cohomology
  \begin{multline}
    \label{eq:115}
    1 \rightarrow R^\ast \rightarrow (S^\ast)^G \rightarrow
    (S^\ast/R^\ast)^G \xrightarrow{\partial^0} \HH^1(G, R^\ast)
    \rightarrow
    \HH^1(G, S^\ast) \rightarrow \\
    \HH^1(G, S^\ast/R^\ast) \xrightarrow{\partial^1} \HH^2(G, R^\ast)
    \rightarrow
    \HH^2(G, S^\ast) \rightarrow \\
    \HH^2(G, S^\ast/R^\ast) \xrightarrow{\partial^2} \HH^3(G, R^\ast)
    \rightarrow \HH^3(G, S^\ast) \rightarrow \dots \ldotp
  \end{multline}
  Equation \eqref{eq:113} implies $\partial^0$ and $\partial^2$ are
  onto.  Since $R^\ast = (S^\ast)^G$, $\partial^0$ is an isomorphism.
  By Proposition~\ref{prop:2.1}, $\HH^j(G,R^\ast)$ is a cyclic group
  of order $n$, if $j$ is odd.  The minimum polynomial of $z$ is $z^n
  - f$. In $S^\ast/R^\ast$, the coset containing $z$ has order $n$.
  This proves $(S^\ast/R^\ast)^G$ is cyclic of order $n$ and is
  generated by the coset containing $z$.  The image of the norm map $N
  : S^\ast/R^\ast \rightarrow S^\ast/R^\ast$ is $\langle 1 \rangle$.
  By Theorem~\ref{th:1.4}, if $i$ is even, $\HH^i(G, S^\ast/R^\ast) =
  (S^\ast/R^\ast)^G$.  This proves (a). Since $\partial^2$ is also an
  isomorphism, we get (b).
\end{proof}
\begin{proposition}
  \label{prop:2.6}
  In the context of Section~\ref{sec:2.1}, the following are true
  \begin{enumerate}[(a)]
  \item $\displaystyle{(S^\ast/k^\ast)^G = \langle z\rangle \times
      \langle f_2\rangle \times \dotsm \times \langle f_\nu\rangle}$
    is a free $\mathbb Z$-module of rank $\nu$.
  \item If $i > 0$ is odd, then
    \[
    \HH^i(G, S^\ast/k^\ast) = \langle 1\rangle\ldotp
    \]
  \item If $i > 0$ is even, there is an exact sequence
    \[
    1 \rightarrow \HH^i(G, S^\ast) \rightarrow \HH^i(G, S^\ast/k^\ast)
    \rightarrow \mathbb Z/n \rightarrow 1
    \]
    of $\mathbb Z/n$-modules.
  \end{enumerate}
\end{proposition}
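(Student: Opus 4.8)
The plan is to exploit two short exact sequences of $G$-modules together with the cohomology already computed in Propositions~\ref{prop:2.1} and~\ref{prop:2.5}. For part~(a) I would take $G$-fixed points in
\[
1 \rightarrow R^\ast/k^\ast \rightarrow S^\ast/k^\ast \rightarrow S^\ast/R^\ast \rightarrow 1,
\]
and for parts~(b) and~(c) I would feed the known groups into the long exact cohomology sequence of the $S$-analogue of~\eqref{eq:110},
\[
1 \rightarrow k^\ast \rightarrow S^\ast \rightarrow S^\ast/k^\ast \rightarrow 1.
\]
The relevant inputs are $\HH^\bullet(G,k^\ast)$ and $\HH^\bullet(G,R^\ast/k^\ast)$ from Proposition~\ref{prop:2.1}, the vanishing $\HH^i(G,S^\ast)=\langle 1\rangle$ for odd $i>0$ from~\eqref{eq:113}, and the identification $(S^\ast/R^\ast)^G\cong\mathbb Z/n$ generated by the coset of $z$ from Proposition~\ref{prop:2.5}(a).

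For part~(a): since $G$ fixes $R$, it acts trivially on $R^\ast/k^\ast=\langle f_1\rangle\times\dots\times\langle f_\nu\rangle$, and Proposition~\ref{prop:2.1}(c) gives $\HH^1(G,R^\ast/k^\ast)=\langle 1\rangle$; hence the connecting map out of $(S^\ast/R^\ast)^G$ vanishes and the fixed-point sequence collapses to
\[
1 \rightarrow R^\ast/k^\ast \rightarrow (S^\ast/k^\ast)^G \rightarrow \mathbb Z/n \rightarrow 1,
\]
with the coset of $z$ (which is $G$-invariant, as $\sigma(z)=\zeta z$ with $\zeta\in k^\ast$) mapping to a generator of $\mathbb Z/n$. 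So $(S^\ast/k^\ast)^G$ is generated by $z,f_1,\dots,f_\nu$, and $z^n=f_1\dotsm f_\nu$ is the only relation: if $f_1^{a_1}\dotsm f_\nu^{a_\nu}z^b$ is trivial, its image in $S^\ast/R^\ast$ forces $n\mid b$, and then substituting $z^b=(f_1\dotsm f_\nu)^{b/n}$ and using that $f_1,\dots,f_\nu$ freely generate $R^\ast/k^\ast$ forces $a_i=-b/n$ for all $i$. Thus $(S^\ast/k^\ast)^G$ is $\mathbb Z^{\nu+1}$ modulo the primitive vector encoding that relation, hence free of rank $\nu$, and eliminating $f_1=z^nf_2^{-1}\dotsm f_\nu^{-1}$ exhibits $\langle z\rangle\times\langle f_2\rangle\times\dots\times\langle f_\nu\rangle$ as a basis.

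For parts~(b) and~(c): run the long exact $G$-cohomology sequence of $1\to k^\ast\to S^\ast\to S^\ast/k^\ast\to 1$. When $i>0$ is odd, $\HH^i(G,S^\ast)=\langle 1\rangle$ by~\eqref{eq:113} and $\HH^{i+1}(G,k^\ast)=\langle 1\rangle$ by Proposition~\ref{prop:2.1}(a), so the piece $\HH^i(G,S^\ast)\to\HH^i(G,S^\ast/k^\ast)\to\HH^{i+1}(G,k^\ast)$ forces $\HH^i(G,S^\ast/k^\ast)=\langle 1\rangle$, which is~(b). When $i>0$ is even, $\HH^i(G,k^\ast)=\langle 1\rangle$ (Proposition~\ref{prop:2.1}(a)), so $\HH^i(G,S^\ast)\to\HH^i(G,S^\ast/k^\ast)$ is injective, while $\HH^{i+1}(G,S^\ast)=\langle 1\rangle$ (again~\eqref{eq:113}, as $i+1$ is odd) makes $\HH^i(G,S^\ast/k^\ast)\to\HH^{i+1}(G,k^\ast)=\mu_n\cong\mathbb Z/n$ surjective; exactness identifies the kernel of this surjection with $\HH^i(G,S^\ast)$, which is precisely the sequence asserted in~(c).

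I do not expect a serious obstacle: once Propositions~\ref{prop:2.1} and~\ref{prop:2.5} are available the argument is a diagram chase. The one place demanding a bit of attention is the bookkeeping in~(a)---verifying that $z^n=f_1\dotsm f_\nu$ really is the \emph{only} relation in $(S^\ast/k^\ast)^G$ (that no extra relation is introduced by the extension) and that the resulting presentation vector $(1,\dots,1,-n)$ has content $1$, so that $(S^\ast/k^\ast)^G$ comes out torsion free of rank exactly $\nu$.
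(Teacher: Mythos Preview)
Your proposal is correct and follows essentially the same route as the paper: the same two short exact sequences, the same inputs from Propositions~\ref{prop:2.1} and~\ref{prop:2.5} and~\eqref{eq:113}, and the same diagram chase for~(b) and~(c). The only cosmetic difference is in finishing~(a): you verify by hand that $z^n=f_1\dotsm f_\nu$ is the only relation among the generators, whereas the paper simply invokes Lemma~\ref{lemma:1.1} to see that $(S^\ast/k^\ast)^G\subseteq S^\ast/k^\ast$ is torsion free and then counts rank.
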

\begin{proof}
  In the exact sequence of $G$-modules
  \begin{equation}
    \label{eq:116}
    1 \rightarrow R^\ast/k^\ast
    \rightarrow S^\ast/k^\ast
    \xrightarrow{\eta} S^\ast/R^\ast \rightarrow 1
  \end{equation}
  $\eta(z) = z$.  The sequence of cohomology associated to
  \eqref{eq:116} is
  \begin{equation}
    \label{eq:117}
    1  \rightarrow R^\ast/k^\ast
    \rightarrow (S^\ast/k^\ast)^G
    \xrightarrow{\eta}
    (S^\ast/R^\ast)^G
    \xrightarrow{\partial^0}
    \HH^1(G, R^\ast/k^\ast)
    \rightarrow     \dots \ldotp
  \end{equation}
  By Proposition~\ref{prop:2.1}, in \eqref{eq:117}, $\eta$ is onto. By
  Proposition~\ref{prop:2.5}, the image of $\eta$ is generated by
  $\eta(z)$.  So $(S^\ast/k^\ast)^G$ is an extension of
  $R^\ast/k^\ast$ by the finite cyclic group $\langle z\rangle$.  By
  Lemma~\ref{lemma:1.1}, $(S^\ast/k^\ast)^G$ is a finitely generated
  torsion free $\mathbb Z$-module.  By Lemma~\ref{lemma:1.3},
  $R^\ast/k^\ast = \langle f_1\rangle \times \dotsm \times \langle
  f_\nu\rangle$.  Since $z^n = f_1 \dotsm f_\nu$, this implies
  $(S^\ast/k^\ast)^G$ is generated by $z, f_2, \dots, f_\nu$, proving
  (a).
  
  The exact sequence of $G$-modules
  \begin{equation}
    \label{eq:118}
    1 \rightarrow k^\ast  \rightarrow S^\ast \rightarrow S^\ast/k^\ast
    \rightarrow 1
  \end{equation}
  gives the exact sequence of cohomology
  \begin{multline}
    \label{eq:119}
    \HH^{2i-1}(G, S^\ast) \rightarrow \HH^{2i-1}(G, S^\ast/k^\ast)
    \xrightarrow{\partial^{2i-1}}
    \HH^{2i}(G, k^\ast) \\
    \rightarrow \HH^{2i}(G, S^\ast) \rightarrow \HH^{2i}(G,
    S^\ast/k^\ast) \xrightarrow{\partial^{2i}} \HH^{2i+1}(G, k^\ast)
    \rightarrow \HH^{2i+1}(G, S^\ast) \rightarrow
  \end{multline}
  where $i > 0$ is arbitrary.  Parts (b) and (c) follow from
  \eqref{eq:119}, \eqref{eq:113}, and Proposition~\ref{prop:2.1}(a).
\end{proof}
\begin{example}
  When $\nu > 1$, the group of units of $T$ can be non-trivial. For
  instance, let $f = (xy-1)(xy+1) = f_1 f_2$.  As computed in
  \cite[\S~3.2]{F:rBgadp}, the group of units in $T=A[z]/(z^2 -f)$ is
  \begin{equation}
    \label{eq:120}
    T^\ast = k^\ast
    \times \langle z-xy\rangle
  \end{equation}
  and the group of units in $S = T[z^{-1}]$ is
  \begin{equation}
    \label{eq:121}
    S^\ast =  k^\ast
    \times \langle z-xy\rangle
    \times \langle z-xy+1\rangle
    \times \langle z-xy -1\rangle\ldotp 
  \end{equation}
  Use the identity $\sigma(z-xy) = (z-xy)^{-1}$ to compute
  \begin{equation}
    \label{eq:122}
    \HH^i(G,T^\ast) =
    \begin{cases}
      k^\ast & \text{if $i=0$}\\
      \mu_2 \times \frac{\langle z-xy\rangle}{\langle (z-xy)^2\rangle}
      & \text{if $i=1, 3, 5, \dots$}\\
      \langle 1\rangle & \text{if $i=2, 4, 6, \dots$}
    \end{cases}
  \end{equation}
  and
  \begin{equation}
    \label{eq:123}
    \HH^i(G,T^\ast/k^\ast) =
    \begin{cases}
      \langle 1\rangle & \text{if $i$ is even}             \\
      \frac{\langle z-xy\rangle}{\langle (z-xy)^2\rangle} & \text{if
        $i$ is odd.}
    \end{cases}
  \end{equation}
  Use the identities
  \[
  \begin{split}
    \sigma(z-xy+1) &= (z-xy+1) (z- xy)^{-1} \\
    \sigma(z-xy-1) &= -(z-xy-1) (z- xy)^{-1} \\
    2 z &=   ( z-xy+1)(z-xy-1) (z- xy)^{-1}    \\
    -2 (xy-1) &=   ( z-xy+1)^2 (z- xy)^{-1}    \\
    -2 (xy+1) &= (z-xy-1)^2 (z- xy)^{-1} 
  \end{split}
  \]
  to compute
  \begin{equation}
    \label{eq:124}
    \HH^i(G,S^\ast) =
    \begin{cases}
      R^\ast = k^\ast \times \langle xy-1\rangle\times\langle
      xy+1\rangle
      & \text{if $i=0$}\\
      \langle 1\rangle & \text{if $i>0$}
    \end{cases}
  \end{equation}
  and
  \begin{equation}
    \label{eq:125}
    \HH^i(G,S^\ast/k^\ast) =
    \begin{cases}
      \langle z\rangle\times\langle xy+1\rangle
      & \text{if $i=0$}\\
      \langle 1\rangle & \text{if $i=1, 3, 5, \dots$}\\
      \frac{\langle z\rangle}{\langle z^2\rangle} & \text{if $i=2, 4,
        6, \dots$.}
    \end{cases}
  \end{equation}
  These results agree with Propositions~\ref{prop:2.5} and
  ~\ref{prop:2.6}.
\end{example}

\subsection{The ramification divisor is irreducible}
\label{sec:2.2}
In addition to the notation established in the opening paragraph of
Section~\ref{sec:2}, assume that $T = A[z]/(z^n-f)$, where $f$ is
irreducible in $A$.
\begin{theorem}
  \label{th:2.8}
  In the context above, the following are true.
  \begin{enumerate}[(a)]
  \item $\Cl(T) = \Cl(S)$.
  \item $\Cl(T)^G = \Cl(S)^G = \langle 0\rangle$.
  \item
    \[
    \HH^i(G, T^\ast) =
    \begin{cases}
      k^\ast & \text{if $i=0$}\\
      \cong \mathbb Z/n & \text{if $i=1, 3, 5, \dots$}\\
      \langle 1\rangle & \text{if $i=2, 4, 6, \dots$}
    \end{cases}
    \]
  \item
    \[
    \HH^i(G, S^\ast) =
    \begin{cases}
      R^\ast =
      k^\ast\times \langle f\rangle & \text{if $i=0$}\\
      \langle 1\rangle & \text{if $i>0$}
    \end{cases}
    \]
  \item $\B(S/R) \cong \HH^1(G, \Pic{S})$.
  \end{enumerate}
\end{theorem}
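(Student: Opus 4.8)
The plan is to deduce all five parts from the cohomology of $S^\ast$, proving them in the order (a), (d), (c), and then (b) together with (e). First, for (a): since $f$ is irreducible in the unique factorization domain $A$, the ideal $zT$ is prime --- indeed $T/zT\cong A/(f)$ --- and of height one, so $Z=Z(z)$ is a prime divisor on $X=\Spec T$; localizing at its generic point, where $T$ becomes a discrete valuation ring with maximal ideal $zT_{zT}$, shows $z$ is a uniformizer, so $\Div_T(z)=Z$ and $\Div_T(f)=\Div_T(z^n)=nZ$. Since $\Spec S=\Spec T[z^{-1}]=X-Z$, the Nagata sequence (the open-subvariety analogue of \eqref{eq:101}, \cite[Theorem~1.1]{F:Bgac})
\[
1\to T^\ast\to S^\ast\xrightarrow{\Div}\mathbb Z Z\to\Cl(T)\to\Cl(S)\to 0
\]
has its middle map onto, because $z\mapsto Z$; hence $\Cl(T)\to\Cl(S)$ is injective, and being surjective it is an isomorphism, which is (a). The same sequence shows $S^\ast/T^\ast\cong\mathbb Z$, and $G$ acts trivially on it since $\sigma(z)=\zeta z$ with $\zeta\in k^\ast\subseteq T^\ast$.

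Next, (d). As $S/R$ is Galois with group $G$, $\HH^0(G,S^\ast)=(S^\ast)^G=R^\ast=k^\ast\times\langle f\rangle$ by Lemma~\ref{lemma:1.3}(b) (here $\nu=1$). For odd $i>0$ the vanishing is \eqref{eq:113}. For even $i>0$ I would feed Proposition~\ref{prop:2.5}(b) the observation that $S^\ast/R^\ast\cong\mathbb Z/n$ carries the trivial $G$-action (the coset of $z$ generates it and $\sigma$ fixes it, as $\zeta\in R^\ast$), so $\HH^{i-1}(G,S^\ast/R^\ast)\cong\mathbb Z/n$ by Theorem~\ref{th:1.4}; since $\HH^i(G,R^\ast)\cong\langle f\rangle/\langle f^n\rangle\cong\mathbb Z/n$ by Proposition~\ref{prop:2.1}(b), the injection of Proposition~\ref{prop:2.5}(b) is a monomorphism $\mathbb Z/n\to\mathbb Z/n$, hence an isomorphism, forcing $\HH^i(G,S^\ast)=\langle 1\rangle$. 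Now (c) follows from the long exact cohomology sequence of $1\to T^\ast\to S^\ast\to S^\ast/T^\ast\to 1$: here $(T^\ast)^G=T^\ast\cap A=A^\ast=k^\ast$ (an element of $A$ invertible in the free $A$-module $T$ is invertible in $A$), the $\HH^i(G,S^\ast)$ are those just computed, and $\HH^i(G,\mathbb Z)=\mathbb Z,0,\mathbb Z/n,0,\dots$ for the trivial module by Theorem~\ref{th:1.4}; in degree one $f=z^n$ maps to $n\in\mathbb Z$, whence $\HH^1(G,T^\ast)\cong\mathbb Z/n$, and for $j\ge 2$ the vanishing of $\HH^{j-1}(G,S^\ast)$ and $\HH^j(G,S^\ast)$ gives $\HH^j(G,T^\ast)\cong\HH^{j-1}(G,\mathbb Z)$, which is $\mathbb Z/n$ for $j$ odd and $\langle 1\rangle$ for $j$ even --- this is (c).

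Finally, (b) and (e) come from the Chase--Harrison--Rosenberg sequence \eqref{eq:112}. Since $\Pic R=0$ and $\HH^1(G,S^\ast)=\HH^2(G,S^\ast)=\HH^3(G,S^\ast)=\langle 1\rangle$ by (d), and $\Pic S=\Cl S$ because $\Spec S$ is nonsingular, the segment of \eqref{eq:112} around $(\Pic S)^G$ collapses to give $(\Pic S)^G=\langle 0\rangle$, i.e. $\Cl(S)^G=\langle 0\rangle$, which equals $\Cl(T)^G$ via the $G$-equivariant isomorphism of (a); and the segment $\HH^2(G,S^\ast)\to\B(S/R)\to\HH^1(G,\Pic S)\to\HH^3(G,S^\ast)$ becomes $\langle 1\rangle\to\B(S/R)\to\HH^1(G,\Pic S)\to\langle 1\rangle$, giving (e). The only part that is not pure bookkeeping is the geometric input in (a): that $\Div_T(z)=Z(z)$ and that $G$ acts trivially on $S^\ast/T^\ast$ and on $S^\ast/R^\ast$. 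Granted that, everything else is an assembly of Propositions~\ref{prop:2.1} and~\ref{prop:2.5}, Theorem~\ref{th:1.4}, \eqref{eq:113}, and the two displayed exact sequences.
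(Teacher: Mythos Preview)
Your argument for parts (a), (b), (e), and for $\HH^1(G,T^\ast)$ is sound and matches the paper's. The gap is in your proof of (d) for even $i$: you assert that $S^\ast/R^\ast\cong\mathbb Z/n$, generated by the coset of $z$. This is not something you have established, and in fact it is equivalent to $T^\ast=k^\ast$. Indeed, from the Nagata sequence you derived $S^\ast/T^\ast\cong\mathbb Z$ (generated by $z$), and since $R^\ast\cap T^\ast=k^\ast$ while $f=z^n$ maps to $n\in\mathbb Z$, one gets a short exact sequence
\[
1\longrightarrow T^\ast/k^\ast\longrightarrow S^\ast/R^\ast\longrightarrow \mathbb Z/n\longrightarrow 0.
\]
So $S^\ast/R^\ast\cong\mathbb Z/n$ holds precisely when $T^\ast=k^\ast$, which is the conjecture stated immediately \emph{after} Theorem~\ref{th:2.8} and is only proved later (Theorems~\ref{th:2.10} and~\ref{th:2.12}) using Theorem~\ref{th:2.8}(c) as input. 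Your route is therefore circular. Proposition~\ref{prop:2.5}(a) only gives you $(S^\ast/R^\ast)^G\cong\mathbb Z/n$, not the full group.

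The paper avoids this by computing the norm directly. For (d), even $i$: since $\HH^{2i}(G,S^\ast)=(S^\ast)^G/N(S^\ast)=R^\ast/N(S^\ast)$, it suffices to show $N:S^\ast\to R^\ast$ is onto. One checks $N(z)=z\cdot\zeta z\cdots\zeta^{n-1}z=\zeta^{n(n-1)/2}f$, so $f\in N(S^\ast)\cdot k^\ast$; and $N(a)=a^n$ for $a\in k^\ast$, which is onto because $k$ is algebraically closed. Hence $N(S^\ast)=R^\ast$. The same observation ($N(a)=a^n$ surjects onto $k^\ast$) gives $\HH^{2i}(G,T^\ast)=k^\ast/N(T^\ast)=\langle 1\rangle$ directly, without appealing to (d). With these two norm computations in hand, the rest of your bookkeeping (periodicity, the long exact sequence of $1\to T^\ast\to S^\ast\to\mathbb Z\to 0$, and the Chase--Harrison--Rosenberg sequence) goes through exactly as you wrote it.
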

\begin{proof}
  Since $T/(z) = A/(f)$, the ideal $I = Tz$ is a height one prime.  In
  $T$, $z^n = f$, so the ideal $T f$ has only one minimal prime,
  namely the height one prime $I = Tz$.  By Nagata's Theorem
  \cite[Theorem~7.1]{F:DCG}, the sequence
  \begin{equation}
    \label{eq:126}
    1 \rightarrow T^\ast
    \rightarrow S^\ast
    \xrightarrow{\Div}
    \mathbb Z I
    \rightarrow \Cl(T) \rightarrow \Cl(S) \rightarrow 0
  \end{equation}
  is exact.  The divisor of $z$ is $\Div(z) =I$, so \eqref{eq:126}
  shows $\Cl(T) = \Cl(S)$. It also follows from \eqref{eq:126} that
  $S^\ast/T^\ast = \langle z\rangle$. Since $\sigma(z) = \zeta z$ and
  $\zeta \in T^\ast$, we see that $S^\ast/T^\ast$ is a trivial
  $G$-module.  Therefore,
  \begin{equation}
    \label{eq:127}
    \HH^i(G,  S^\ast/T^\ast) =
    \begin{cases}
      \langle z\rangle  & \text{if $i=0$,}\\
      \langle 1\rangle  & \text{if $i = 1, 3, 5, \dots$,}\\
      \langle z\rangle/ \langle z^n\rangle & \text{if $i = 2, 4, 6,
        \dots$}
    \end{cases}
  \end{equation}
  by Theorem~\ref{th:1.4}.  The long exact sequence associated to $1
  \rightarrow T^\ast \rightarrow S^\ast \rightarrow S^\ast/T^\ast
  \rightarrow 1$ is
  \begin{equation}
    \label{eq:128}
    1 \rightarrow (T^\ast)^G \rightarrow (S^\ast)^G \rightarrow
    S^\ast/T^\ast 
    \rightarrow \HH^1(G, T^\ast) \rightarrow \HH^1(G, S^\ast)
    \rightarrow
    \HH^1(G, S^\ast/T^\ast) \rightarrow\dots
  \end{equation}
  As in Lemma~\ref{lemma:1.3}, $\Pic(R) = \langle 0\rangle$.  By
  sequence \eqref{eq:112}, $\HH^i(G,S^\ast) = \langle 1\rangle$ for
  odd $i$.  Again by Lemma~\ref{lemma:1.3}, $(T^\ast)^G = A^\ast=
  k^\ast$.  The norm map \eqref{eq:108} is onto, so for $i = 2, 4, 6,
  \dots$, we have $\HH^i(G, T^\ast) = 1$. By Lemma~\ref{lemma:1.3},
  $(S^\ast)^G = R^\ast = k^\ast \times \langle f\rangle$. The terms of
  lowest degree in \eqref{eq:128} give rise to the short exact
  sequence
  \begin{equation}
    \label{eq:129}
    1  \rightarrow  \langle f\rangle
    \rightarrow  \langle z\rangle
    \rightarrow \HH^1(G, T^\ast) \rightarrow 1\ldotp
  \end{equation}
  Therefore $\HH^i(G, T^\ast) =\langle z\rangle/\langle z^n\rangle
  \cong \mathbb Z/n$ for odd $i$, proving (c).  On the element $z$,
  the norm map $N: S^\ast \rightarrow R^\ast$ is
  \begin{equation}
    \label{eq:130}
    N(z) = z \zeta z \dotsm \zeta^{n-1} z = \zeta^{n(n-1)/2} f\ldotp
  \end{equation}
  Use this and the fact that \eqref{eq:108} is onto to prove that
  $\HH^i(G, S^\ast) = 1$ for $i = 2, 4, 6, \dots$, proving (d).  Part
  (b) follows from (a), (d), and sequence \eqref{eq:112}. Part (e)
  follows from sequence \eqref{eq:112}.
\end{proof}
\begin{conjecture}
  If $f$ is irreducible, then $T^\ast = k^\ast$. A partial answer is
  given in Theorems~\ref{th:2.10} and ~\ref{th:2.12}.
\end{conjecture}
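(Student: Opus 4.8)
Although this is phrased as a conjecture, I sketch the strategy that proves the two cases recorded in Theorems~\ref{th:2.10} and~\ref{th:2.12}, and then say where the argument breaks down. By Lemma~\ref{lemma:1.1} the group $T^\ast/k^\ast$ is a finitely generated torsion free $\mathbb Z$-module, so it suffices to show that its rank is $0$. The mechanism throughout is to confront the $\mathbb Z[G]$-module structure of the lattice $T^\ast/k^\ast$ with the Tate cohomology of $T^\ast$ already computed in Theorem~\ref{th:2.8}(c): for $f$ irreducible one has $\HH^i(G,T^\ast)\cong\mathbb Z/n$ for every odd $i>0$, and $(T^\ast)^G=k^\ast$.

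\emph{The prime case $n=p$.} Here $G$ is cyclic of order $p$ and $T^\ast/k^\ast$ is a $\mathbb Z[G]$-lattice, so by the integral representation theory invoked in Theorem~\ref{th:2.3} it is a direct sum of $t$ fractional $\mathbb Z[\zeta]$-ideals inside $\mathbb Q[\zeta]$; on such a summand $\sigma$ acts as multiplication by $\zeta$, so $N$ acts as $0$ and $\sigma-1$ as multiplication by the element $\zeta-1$ of norm $p$, whence $\HH^1(G,T^\ast/k^\ast)$ has order $p^t$. Because $(T^\ast)^G=k^\ast$, Lemma~\ref{lemma:2.2}(c) gives a short exact sequence $1\to\mu_p\to\HH^1(G,T^\ast)\to\HH^1(G,T^\ast/k^\ast)\to1$, so $\HH^1(G,T^\ast)$ has order $p^{t+1}$. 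Comparing with Theorem~\ref{th:2.8}(c), which says this group has order $p$, forces $t=0$. Then $T^\ast/k^\ast$ has rank $(p-1)t=0$ and, being torsion free, is trivial; that is, $T^\ast=k^\ast$.

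\emph{The case $n=4$.} The skeleton is the same, but $\mathbb Z[\mathbb Z/4]$ is no longer a hereditary order. It is however still of finite representation type, so $T^\ast/k^\ast$ decomposes into indecomposable $\mathbb Z[G]$-sublattices taken from a fixed finite list (assembled from $\mathbb Z$, $\mathbb Z[i]$, and the relevant pullback and extension lattices between the idempotent blocks of $\mathbb Q[\mathbb Z/4]$). I would tabulate the Tate cohomology of each indecomposable summand, then use the vanishing statements $\HH^{2i}(G,T^\ast)=\langle1\rangle$ for $i>0$ and $\Cl(T)^G=\langle0\rangle$ from Theorem~\ref{th:2.8} to rule out most of them, and finally check that the only combination compatible with $\HH^1(G,T^\ast)\cong\mathbb Z/4$ is the one in which every summand equals $\langle0\rangle$. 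The real work here is the bookkeeping over $\mathbb Z[i]$ together with keeping track of the $\mu_4$-extension of Lemma~\ref{lemma:2.2}(c), which need no longer split; this is why $n=4$ deserves a separate argument rather than following formally from the prime case.

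\emph{The main obstacle.} The method is tied to the integral representation theory of $\mathbb Z[\mathbb Z/n]$: once $n$ leaves the short list of cyclic groups whose integral group ring has finite lattice type (e.g.\ as soon as $n$ is divisible by $p^3$, or by $p^2q$ for distinct primes), there are infinitely many, indeed wild, indecomposable $\mathbb Z[\mathbb Z/n]$-lattices, the finite case-check collapses, and the cohomology of $T^\ast$ no longer pins down the rank of $T^\ast/k^\ast$. A proof of the conjecture in general would presumably have to avoid the module decomposition entirely --- for instance by a geometric input bounding $\Pic(X)$ or the \'etale fundamental group of $X$ directly (a Lefschetz-type argument along a generic pencil of hyperplane sections), or by an induction that replaces a cover of composite degree by a tower of prime-degree covers while propagating the vanishing of $T^\ast/k^\ast$ up the tower.
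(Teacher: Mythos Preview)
Your prime-degree argument is exactly the paper's: Theorem~\ref{th:2.3} gives $\HH^1(G,T^\ast)\cong\mu_p\times(\mathbb Z/p)^t$, Theorem~\ref{th:2.8}(c) forces $t=0$, done.

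For $n=4$, however, the paper does \emph{not} classify indecomposable $\mathbb Z[\mathbb Z/4]$-lattices. Instead it runs precisely the tower argument you relegate to your last sentence. One sets $H=\langle\sigma^2\rangle$ and $T_1=T_2^H$; by the prime case $T_1^\ast=k^\ast$, so Lemma~\ref{lemma:2.2} applies to the $H$-module $T_2^\ast$. The extra input is arithmetic, not representation-theoretic: Lemma~\ref{lemma:2.11}(e) identifies $\HH^1(H,S_2^\ast)$ with the relative class group $\Cl(S_2/S_1)=\ker\bigl(\Cl(S_1)\to\Cl(S_2)\bigr)$, which is $2$-torsion; a cited result says ${_2\Cl(S_1)}=\Cl(S_1)^{G/H}$, and Theorem~\ref{th:2.8}(b) kills that. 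One then reads off $\HH^1(H,T_2^\ast)\cong\mu_2$, hence $\HH^1(H,T_2^\ast/k^\ast)=1$, and the $p=2$ structure theory over $H$ finishes as before. So the propagation step up the tower is bought with a class-group computation, not a lattice tabulation.

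Your proposed route via the (indeed finite) list of indecomposable $\mathbb Z[\mathbb Z/4]$-lattices is plausible but is only a sketch; you would still have to explain why the constraints $\HH^1(G,T^\ast)\cong\mathbb Z/4$, $\HH^{2i}(G,T^\ast)=1$, and $(T^\ast)^G=k^\ast$ single out the zero lattice, and the nonsplit $\mu_4$-extension you flag is a real issue. The paper's method sidesteps all of this. Note also that the paper's follow-up conjecture for $n=2^s$ pinpoints exactly where its tower argument stalls: one needs ${_2\Cl(S_i)}\subseteq\Cl(S_i)^G$ at every stage, and this is only known at the bottom. That is the concrete obstruction you should record, rather than wild representation type.
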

\begin{theorem}
  \label{th:2.10}
  In the context of Section~\ref{sec:2.2}, assume $n=p$ is a prime
  number.  Then $T^\ast = k^\ast$.
\end{theorem}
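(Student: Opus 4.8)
The plan is to exploit the fact that, once $n=p$ is prime, the group $\HH^1(G,T^\ast)$ has already been computed in two independent ways, and to force the two answers to agree. One computation, coming from the structure theory for $\mathbb Z[\mathbb Z/p]$-modules, produces a group whose order grows with the (a priori unknown) rank of $T^\ast/k^\ast$; the other, which uses the irreducibility of $f$ in an essential way, pins that group down to be as small as possible. Matching them will kill the rank.

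First I would invoke Theorem~\ref{th:2.3}. An irreducible $f$ is precisely the case $\nu=1$ of the setup of Section~\ref{sec:2.1}, and the standing hypothesis $(T^\ast)^G=k^\ast$ holds here because $T^G=A$ and $A^\ast=k^\ast$ by Lemma~\ref{lemma:1.3}. So Theorem~\ref{th:2.3} applies and yields a $\mathbb Z[G]$-module isomorphism $T^\ast/k^\ast\cong A_1\oplus\dots\oplus A_t$ with each $A_i$ a $\mathbb Z[\zeta]$-ideal in $\mathbb Q[\zeta]$, together with $\HH^1(G,T^\ast)\cong\mu_p\times(\mathbb Z/p)^{(t)}$. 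Since $k$ contains a primitive $p$th root of unity, $\mu_p$ has order $p$, so this says $\HH^1(G,T^\ast)$ is finite of order $p^{t+1}$.

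Next I would bring in Theorem~\ref{th:2.8}(c), whose proof is exactly where irreducibility of $f$ is used: it makes $I=Tz$ a height-one prime with $Tf=I^{n}$, hence $S^\ast/T^\ast=\langle z\rangle$ with $f=z^{n}$, and this forces $\HH^1(G,T^\ast)\cong\mathbb Z/n=\mathbb Z/p$, a group of order $p$. Comparing this with the previous paragraph, the two descriptions refer to the same group, so $p^{t+1}=p$, whence $t=0$. Therefore $T^\ast/k^\ast$ is the empty direct sum, i.e.\ trivial, and $T^\ast=k^\ast$. (Equivalently, one can route the last step through Lemma~\ref{lemma:2.2}(c) with $i=1$: the exact sequence $1\to\mu_p\to\HH^1(G,T^\ast)\to\HH^1(G,T^\ast/k^\ast)\to1$ combined with $|\HH^1(G,T^\ast)|=p$ from Theorem~\ref{th:2.8}(c) shows $\HH^1(G,T^\ast/k^\ast)=\langle1\rangle$, and then $t=0$ by Theorem~\ref{th:2.3}.)

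There is essentially no remaining obstacle, since all the real work lives in Theorems~\ref{th:2.3} and~\ref{th:2.8}; the only points demanding care are checking that $(T^\ast)^G=k^\ast$ so that Theorem~\ref{th:2.3} and Lemma~\ref{lemma:2.2} genuinely apply, and keeping track of $|\mu_p|=p$ so that the order count reads $p^{t+1}=p$ and not something weaker. The conceptual content worth recording is that irreducibility of the ramification polynomial forces $\HH^1(G,T^\ast)$ to be as small as the module structure theory permits, and smallness there means that the free $\mathbb Z$-module $T^\ast/k^\ast$, of rank $(p-1)t$, must vanish.
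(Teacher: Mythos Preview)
Your proposal is correct and is exactly the paper's own argument, just written out in full: the paper's proof reads in its entirety ``This is a consequence of Theorem~\ref{th:2.3} and Theorem~\ref{th:2.8}(c),'' and you have supplied precisely the comparison of $\HH^1(G,T^\ast)\cong\mu_p\times(\mathbb Z/p)^{(t)}$ against $\HH^1(G,T^\ast)\cong\mathbb Z/p$ that forces $t=0$. Your parenthetical alternative via Lemma~\ref{lemma:2.2}(c) is also fine and amounts to the same thing.
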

\begin{proof}
  This is a consequence of Theorem~\ref{th:2.3} and
  Theorem~\ref{th:2.8}(c).
\end{proof}
\begin{lemma}
  \label{lemma:2.11}
  Let $p$ be a prime and $n = p^2$.  Let $G = \langle\sigma\rangle$,
  $H = \langle\sigma^p\rangle$.  Let
  \[
  \begin{split}
    T_2 &= A[z]/(z^n - f), \\
    S_2 &=
    R[z]/(z^n - f),\\
    T_1 &= T_2^H = A[z^p],\\
    S_1 &= S_2^H = R[z^p]\ldotp
  \end{split}
  \]
  The following are true.
  \begin{enumerate}[(a)]
  \item
    \[
    \HH^i(H, S_2^\ast/T_2^\ast) =
    \begin{cases}
      \langle z\rangle & \text{if $i = 0$}\\
      \langle 1\rangle & \text{if $i = 1, 3, 5, \dots$}\\
      \langle z\rangle/\langle z^p\rangle & \text{if $i = 2, 4, 6,
        \dots$.}
    \end{cases}
    \]
  \item $\HH^0(H, T_2^\ast) = T_1^\ast = k^\ast$ and $\HH^0(H,
    S_2^\ast) = S_1^\ast = k^\ast\times \langle z^p\rangle$.
  \item If $i > 0$ is even, then $\HH^i(H, T_2^\ast) = 1$ and
    $\HH^i(H, S_2^\ast) = 1$.
  \item If $i > 0$ is odd, there is a short exact sequence
    \[
    1 \rightarrow \mu_p \rightarrow \HH^i(H, T_2^\ast) \rightarrow
    \HH^i(H, S_2^\ast) \rightarrow 1\ldotp
    \]
  \item The group $\HH^1(H, S_2^\ast)$ is isomorphic to
    $\Cl(S_2/S_1)$, which is the kernel of the natural map $i :
    \Cl(S_1) \rightarrow \Cl(S_2)$.
  \end{enumerate}
\end{lemma}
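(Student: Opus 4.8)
The plan is to exploit the factorisation of the degree $p^2$ cover $T_2/A$ through $T_1$. Write $w=z^p$, so $w^p=z^{p^2}=f$; then $T_1=A[w]/(w^p-f)$ and $S_1=R[w]/(w^p-f)$ are again covers of the type treated in Section~\ref{sec:2.2}, obtained by adjoining a $p$-th root of an irreducible (resp.\ invertible) element of the base, and moreover $S_2=T_2[z^{-1}]$, $S_1=T_1[w^{-1}]$ because $f=z^n$ in $T_2$ and $f=w^p$ in $T_1$. For (a): since $f$ is irreducible, $T_2/(z)=A/(f)$ is a domain, so $I=T_2z$ is a height one prime and $T_2f$ has $I$ as its unique minimal prime; Nagata's sequence exactly as in \eqref{eq:126} then gives $S_2^\ast/T_2^\ast=\langle z\rangle$, infinite cyclic, with $\Div(z)=I$. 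Because $\sigma^p(z)=\zeta^pz$ and $\zeta^p\in k^\ast\subseteq T_2^\ast$, the subgroup $H$ acts trivially on $\langle z\rangle$, so Theorem~\ref{th:1.4} applied to the cyclic group $H$ of order $p$ acting trivially on $\mathbb Z$ yields (a).

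For (b) I would first note that an $H$-fixed unit of $T_2$ lies in $T_2^H=T_1$ and has $H$-fixed (hence $T_1$-valued) inverse, so $(T_2^\ast)^H=T_1^\ast$ and likewise $(S_2^\ast)^H=S_1^\ast$. Now $T_1=A[w]/(w^p-f)$ satisfies the hypotheses of Section~\ref{sec:2.2} with the prime exponent $p$, so Theorem~\ref{th:2.10} gives $T_1^\ast=k^\ast$; and Nagata's sequence applied to $T_1\subseteq S_1=T_1[w^{-1}]$, with $w$ generating the height one prime $T_1w$, gives $S_1^\ast=k^\ast\times\langle w\rangle=k^\ast\times\langle z^p\rangle$. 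Part (c) is then immediate from Theorem~\ref{th:1.4}: for even $i>0$, $\HH^i(H,M)=M^H/N_HM$ with $N_H=1+\sigma^p+\dots+\sigma^{p(p-1)}$; for $M=T_2^\ast$ we have $M^H=k^\ast$, and $N_H$ already maps $k^\ast$ onto $k^\ast$ via $a\mapsto a^p$; for $M=S_2^\ast$ we have $M^H=k^\ast\times\langle z^p\rangle$, and a direct computation gives $N_H(z)=\zeta^{p^2(p-1)/2}z^p$, a scalar multiple of $z^p$, so $N_H$ is onto in both cases.

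Part (d) I would read off the $H$-cohomology long exact sequence of $1\to T_2^\ast\to S_2^\ast\to\langle z\rangle\to1$. For odd $i>0$, $\HH^i(H,\langle z\rangle)=0$ by (a), so $\HH^i(H,T_2^\ast)\to\HH^i(H,S_2^\ast)$ is onto, with kernel the cokernel of $\HH^{i-1}(H,S_2^\ast)\to\HH^{i-1}(H,\langle z\rangle)$; for $i=1$ this cokernel is $\langle z\rangle/\langle z^p\rangle$ (the image of $(S_2^\ast)^H=k^\ast\times\langle z^p\rangle$ being $\langle z^p\rangle$), and for $i\geq3$ it is all of $\HH^{i-1}(H,\langle z\rangle)\cong\mathbb Z/p$ since $\HH^{i-1}(H,S_2^\ast)=1$ by (c). Either way the kernel is cyclic of order $p$, and one identifies it with $\mu_p$ (for instance as the image of $\HH^i(H,k^\ast)$, using Lemma~\ref{lemma:2.2}(c), which applies because $(T_2^\ast)^H=k^\ast$). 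For (e) I would invoke the analogue of the Chase--Harrison--Rosenberg sequence \eqref{eq:112} for the Kummer extension $S_2=S_1[z]/(z^p-w)$ of $S_1$ with group $H$ (note $\zeta^p\in S_1$ and $w\in S_1^\ast$): since $S_1$ and $S_2$ are regular we have $\Pic=\Cl$, and the injectivity of $\HH^1(H,S_2^\ast)\to\Pic(S_1)$ together with $(\Pic S_2)^H\hookrightarrow\Pic S_2$ identifies $\HH^1(H,S_2^\ast)$ with $\ker(\Cl(S_1)\to\Cl(S_2))=\Cl(S_2/S_1)$.

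The genuinely new input is (b): recognising that $(T_2^\ast)^H=T_1^\ast$ and that $T_1$ is again of the form $A[\sqrt[p]{f}]$ with $f$ irreducible, so that Theorem~\ref{th:2.10} forces $T_1^\ast=k^\ast$. Once this is in hand the rest is routine bookkeeping with Theorem~\ref{th:1.4}, Nagata's sequence \eqref{eq:126}, and the Chase--Harrison--Rosenberg sequence \eqref{eq:112}; the only point needing a little care is the case split $i=1$ versus $i\geq3$ in the long exact sequence used for (d).
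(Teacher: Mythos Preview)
Your proof is correct and follows essentially the same route as the paper: Nagata's sequence to identify $S_2^\ast/T_2^\ast=\langle z\rangle$ with trivial $H$-action (the paper just cites Theorem~\ref{th:2.8} for this), Theorem~\ref{th:2.10} applied to $T_1=A[\sqrt[p]{f}]$ for (b), norm surjectivity for (c), the long exact sequence of $1\to T_2^\ast\to S_2^\ast\to\langle z\rangle\to1$ for (d), and the Chase--Harrison--Rosenberg sequence for $S_2/S_1$ for (e) (the paper instead cites \cite[Theorem~16.1]{F:DCG}, which is the same identification). Your case split $i=1$ versus $i\geq 3$ in (d) is what the paper hides under the word ``periodicity''.
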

\begin{proof} {F}rom Theorem~\ref{th:2.8}, $S_2^\ast/T_2^\ast =
  \langle z\rangle$ with trivial $G$-action. This gives (a).  Since
  $T_2^H = T_1$, Theorem~\ref{th:2.10} implies $(T_2^\ast)^H =
  T_1^\ast = k^\ast$ and $(S_2^\ast)^H = S_1^\ast = k^\ast \times
  \langle z^p\rangle$, which is (b).  Since $H$ is cyclic, if $i > 0$
  is even, then $\HH^i(H, T_2^\ast)$ is equal to $k^\ast/ N T_2^\ast$,
  which is trivial for the same reason that the norm \eqref{eq:108} is
  onto.  As in \eqref{eq:130}, one can check that $z^p$ is in the
  image of the norm map $S_2^\ast \rightarrow S_1^\ast$.  Part (c)
  follows.  The exact sequence
  \begin{multline}
    \label{eq:131}
    1 \rightarrow (T_2^\ast)^H \rightarrow (S_2^\ast)^H \rightarrow
    S_2^\ast/T_2^\ast \rightarrow \HH^1(G, T_2^\ast) \rightarrow  \\
    \HH^1(H, S_2^\ast) \rightarrow \HH^1(H, S_2^\ast/T_2^\ast)
    \rightarrow \HH^2(H, T_2^\ast) \rightarrow \HH^2(H, S_2^\ast)
    \rightarrow \\
    \HH^2(H, S_2^\ast/T_2^\ast) \rightarrow \HH^3(H, T_2^\ast)
    \rightarrow \HH^3(H, S_2^\ast) \rightarrow \HH^3(H,
    S_2^\ast/T_2^\ast) \rightarrow \dots
  \end{multline}
  is the counterpart of \eqref{eq:128} for the group $H$. Use
  \eqref{eq:131}, parts (a), (b), (c), and periodicity to get (d). The
  groups in (d) are $\mathbb Z/p$-modules, so the sequence splits.
  Part (e) is proved in \cite[Theorem~16.1]{F:DCG}.
\end{proof}
\begin{theorem}
  \label{th:2.12}
  If $n = 4$, then $T^\ast = k^\ast$.
\end{theorem}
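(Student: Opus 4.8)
The plan is to determine the $\mathbb Z[G]$-module structure of $M := T^\ast/k^\ast$ and then to force $M$ to be trivial by computing $\HH^1(G,M)$ in two different ways and comparing. Throughout, $H=\langle\sigma^2\rangle$ denotes the subgroup of order two; note that $4$ being invertible in $k$ means the characteristic of $k$ is not $2$ and $\zeta^2=-1$.

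First I would exploit the intermediate cover $T_1:=T^H=A[z^2]$. Writing $w=z^2$, this is $A[w]/(w^2-f)$ with $f$ irreducible and $2$ invertible in $k$, so Theorem~\ref{th:2.10} (the prime case $p=2$) applies and gives $T_1^\ast=k^\ast$, that is, $(T^\ast)^H=k^\ast$. Feeding this hypothesis into Lemma~\ref{lemma:2.2}(a) for the subgroup $H$ yields $M^H=(T^\ast/k^\ast)^H=\langle 1\rangle$. By Lemma~\ref{lemma:1.1}, $M$ is a finitely generated torsion-free $\mathbb Z$-module; since over $\mathbb Q$ the algebra $\mathbb Q[H]$ splits as $\mathbb Q\times\mathbb Q$, a $\mathbb Q[H]$-module with vanishing $H$-invariants is purely sign-isotypic, so $\sigma^2$ acts as $-1$ on $M\otimes_{\mathbb Z}\mathbb Q$, hence on $M$ itself. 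Thus $M$ is a module over $\mathbb Z[\sigma]/(\sigma^2+1)\cong\mathbb Z[i]$, and being finitely generated and torsion-free over this principal ideal domain it is free: $M\cong\mathbb Z[i]^{(s)}$ for some $s\geq 0$, with $\sigma$ acting as multiplication by $i$.

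Now I would compute $\HH^1(G,M)$ twice. On one hand, Theorem~\ref{th:1.4} applies directly: on $M$ the element $N=1+\sigma+\sigma^2+\sigma^3$ acts as $1+i-1-i=0$ and $D=\sigma-1$ acts as $i-1$, so ${}_{N}M=M$ and
\[
\HH^{2j-1}(G,M)={}_{N}M/DM=M/(i-1)M\cong\bigl(\mathbb Z[i]/(1-i)\mathbb Z[i]\bigr)^{(s)}\cong(\mathbb Z/2)^{(s)}
\]
for every $j\geq 1$. On the other hand, the exact sequence of $G$-modules $1\to k^\ast\to T^\ast\to M\to 1$ (trivial action on $k^\ast$) has a long exact cohomology sequence in which $M^G\subseteq M^H=\langle 1\rangle$ kills the relevant connecting map; combined with Proposition~\ref{prop:2.1}(a) ($\HH^1(G,k^\ast)=\mu_4$ and $\HH^2(G,k^\ast)=\langle 1\rangle$) and with $\HH^0(G,T^\ast)=k^\ast$ from Theorem~\ref{th:2.8}(c), this yields the short exact sequence
\[
1\to\mu_4\to\HH^1(G,T^\ast)\to\HH^1(G,M)\to 1 .
\]
By Theorem~\ref{th:2.8}(c) with $n=4$ we have $\HH^1(G,T^\ast)\cong\mathbb Z/4$, and $\mu_4\cong\mathbb Z/4$ as well; an injective endomorphism of $\mathbb Z/4$ is onto, so $\HH^1(G,M)=\langle 1\rangle$. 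Comparing with the first computation, $(\mathbb Z/2)^{(s)}$ is trivial, hence $s=0$, $M$ is trivial, and $T^\ast=k^\ast$.

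The main obstacle is the structural step — controlling $M$ as a $\mathbb Z[G]$-module — because for a cyclic group of non-prime order $4$ there are many indecomposable integral lattices, so the analogue of Theorem~\ref{th:2.3} is not available off the shelf. The leverage comes from applying Theorem~\ref{th:2.10} to the quadratic subcover $T^H$: the resulting identity $M^H=\langle 1\rangle$ is exactly what collapses the possibilities to $\sigma^2=-1$, i.e.\ to $M$ being a torsion-free $\mathbb Z[i]$-module, after which the argument reduces to the two small cohomology computations above. (This intermediate-cover data is precisely what Lemma~\ref{lemma:2.11} organizes; an alternative route would combine Lemma~\ref{lemma:2.11}(d),(e) with injectivity of $\Cl(S_1)\to\Cl(S_2)$, but the approach above sidesteps the class-group input by working at the level of $G$, where $\HH^1(G,T^\ast)$ is already cyclic of order $4$.)
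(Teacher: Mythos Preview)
Your proof is correct and takes a genuinely different route from the paper's. Both arguments begin the same way: apply Theorem~\ref{th:2.10} to the intermediate quadratic cover $T_1=T^H=A[z^2]$ to obtain $(T^\ast)^H=k^\ast$, hence $M^H=(T^\ast/k^\ast)^H=\langle 1\rangle$ (this is the content of Lemma~\ref{lemma:2.11}(b) and Lemma~\ref{lemma:2.2}(a)). From there the paths diverge. The paper works at the level of the subgroup $H$: it brings in class-group input, namely Lemma~\ref{lemma:2.11}(e) together with the identity ${}_2\Cl(S_1)=\Cl(S_1)^{G/H}$ from an external reference and Theorem~\ref{th:2.8}(b), to conclude $\HH^1(H,S_2^\ast)=\langle 1\rangle$; then Lemma~\ref{lemma:2.11}(d) gives $\HH^1(H,T^\ast)\cong\mu_2$, so $\HH^1(H,M)=\langle 1\rangle$, and the prime-order structure theory (as in Theorem~\ref{th:2.3}, now for $H$) forces $M=\langle 1\rangle$. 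You instead work at the level of the full group $G$: the vanishing $M^H=\langle 1\rangle$ already pins down $\sigma^2=-1$ on $M$, making $M$ a free $\mathbb Z[i]$-lattice of some rank $s$, whence $\HH^1(G,M)\cong(\mathbb Z/2)^{(s)}$; then Theorem~\ref{th:2.8}(c), which is proved for arbitrary $n$ and in particular for $n=4$, gives $\HH^1(G,T^\ast)\cong\mathbb Z/4$, so the short exact sequence of Lemma~\ref{lemma:2.2}(c) forces $\HH^1(G,M)=\langle 1\rangle$ and $s=0$. Your argument is more self-contained---it avoids the class-group computation and the outside reference entirely---whereas the paper's approach, by staying at prime order, fits the framework of Theorem~\ref{th:2.3} and Lemma~\ref{lemma:2.11} and points toward the inductive strategy for $n=2^s$ suggested in the conjecture that follows.
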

\begin{proof}
  In Lemma~\ref{lemma:2.11}(e), the group $H$ has order two. By
  \cite[Corollary~17.27]{MR2286236}, the group $\Cl(S_2/S_1)$ is
  annihilated by two. Hence $\Cl(S_2/S_1)$ is a subgroup of
  ${_2\Cl(S_1)}$.  By \cite[Theorem~2.3]{F:rBgadp}, ${_2\Cl(S_1)}$ is
  equal to $\Cl(S_1)^{G/H}$, which by Theorem~\ref{th:2.8}(b) is equal
  to $\langle 0\rangle$.  Lemma~\ref{lemma:2.11}(e) implies $\HH^1(H,
  S_2^\ast) = \langle 1\rangle$ and Lemma~\ref{lemma:2.11}(d) implies
  $\HH^1(H, T_2^\ast) \cong \mu_2$.  By Lemma~\ref{lemma:2.2},
  $\HH^1(H, T_2^\ast/k^\ast) = \langle 1 \rangle$.  As in the proof of
  Theorem~\ref{th:2.10}, $T_2^\ast/k^\ast = \langle 1 \rangle$.
\end{proof}
\begin{conjecture}
  If $n = 2^s$, for any $s > 0$, then $T^\ast = k^\ast$.  To iterate
  the argument used in Theorem~\ref{th:2.12}, it is necessary to know
  that elements of order two in $\Cl(S_i)$ are fixed by $G$.
\end{conjecture}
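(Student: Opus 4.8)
The plan is to bypass the tower of intermediate class groups $\Cl(S_i)$ mentioned in the remark, and instead to run the argument of Theorem~\ref{th:2.10} directly for the full group $G$, which is now cyclic of prime-power order $2^s$ rather than of prime order. The only feature of the proof of Theorem~\ref{th:2.10} that is special to prime order is the use of the classification of $\mathbb Z[G]$-lattices through Theorem~\ref{th:2.3}; I would replace it by an elementary determinant computation that remains valid for any cyclic $p$-group.

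First, Lemma~\ref{lemma:2.2} applies with $H = G$: since $T^G = A$ and $A^\ast = k^\ast$ by Lemma~\ref{lemma:1.3}(a), we have $(T^\ast)^G = (T^G)^\ast = k^\ast$. Parts (a) and (c) of Lemma~\ref{lemma:2.2} then give $(T^\ast/k^\ast)^G = \langle 1\rangle$ together with an exact sequence
\[
1 \longrightarrow \mu_n \longrightarrow \HH^1(G,T^\ast) \longrightarrow \HH^1(G,T^\ast/k^\ast) \longrightarrow 1 .
\]
By Theorem~\ref{th:2.8}(c) the middle group is cyclic of order $n = 2^s$, and $\mu_n$ is cyclic of the same order because $k$ is algebraically closed with $n$ invertible in $k$; an injection between finite groups of equal cardinality is onto, so $\HH^1(G,T^\ast/k^\ast) = \langle 1\rangle$. (Nothing so far uses that $n$ is a power of $2$.)

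It remains to establish the following lattice statement, which is where the hypothesis $n = 2^s$ is used: if $G$ is cyclic of order $2^s$ and $M$ is a $\mathbb Z[G]$-lattice with $M^G = \langle 1\rangle$ and $\HH^1(G,M) = \langle 1\rangle$, then $M = \langle 1\rangle$. Applied to $M = T^\ast/k^\ast$, which is a lattice by Lemma~\ref{lemma:1.1}, this completes the proof. To prove the statement, let $\sigma$ generate $G$ and put $D = \sigma - 1$, $N = 1 + \sigma + \dots + \sigma^{n-1}$. Since $M^G = \langle 1\rangle$, the trivial character is absent from $M \otimes_{\mathbb Z}\mathbb Q$, so $N$ acts as $0$ on $M\otimes\mathbb Q$ and hence on $M$; by Theorem~\ref{th:1.4}, $\HH^1(G,M) = {}_N M/DM = M/DM$. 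Also $\ker D = M^G = \langle 1\rangle$, so $D$ is injective on the lattice $M$ and $|M/DM| = |\det(D\mid M)|$. Because $\sigma^{2^s} = 1$, the characteristic polynomial of $\sigma$ on $M$ is a product $\prod_{k=1}^{s}\Phi_{2^k}(x)^{e_k}$ of the cyclotomic polynomials $\Phi_{2^k}$, no factor $\Phi_1 = x - 1$ occurring because $M^G = \langle 1\rangle$; therefore
\[
|\det(D\mid M)| = \prod_{k=1}^{s}\Phi_{2^k}(1)^{e_k} = 2^{\,e_1 + \dots + e_s},
\]
using $\Phi_{2^k}(1) = 2$ for every $k \ge 1$. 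If $M \neq \langle 1\rangle$ then $\operatorname{rank}_{\mathbb Z} M = \sum_k \varphi(2^k) e_k \ge 1$, so some $e_k \ge 1$ and $|\HH^1(G,M)| \ge 2$, contradicting the vanishing just proved. Hence $M = \langle 1\rangle$.

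The decisive point, and the only obstacle I foresee, is the inequality $\Phi_{2^k}(1) = 2 \neq \pm 1$: it is exactly what makes $\HH^1(G,M)$ nonzero for every nonzero lattice $M$ with no $G$-invariants. The same proof works verbatim for any prime power $n = p^s$, via $\Phi_{p^k}(1) = p$. It fails when $n$ has two distinct prime divisors, because then $\Phi_d(1) = 1$ for composite-order $d \mid n$ and the lattice $M = \mathbb Z[\zeta_d]$, with $\sigma$ acting as multiplication by $\zeta_d$, satisfies $M^G = \langle 1\rangle$ and $\HH^1(G,M) = \langle 1\rangle$ but is nonzero. So the real difficulty is confined to the general conjecture following Theorem~\ref{th:2.8}: one would there have to rule out such composite-order summands in $T^\ast/k^\ast$ by a finer analysis of its $\mathbb Z[G]$-module structure. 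For $n = 2^s$ the argument above needs no control over the $2$-torsion in $\Cl(S_i)$ at all.
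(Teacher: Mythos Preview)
Your argument is correct and, notably, settles what the paper leaves as an open conjecture: the paper offers no proof here, and the second sentence of the conjecture explains why the author's intended inductive route through the tower $T_1 \subset T_2 \subset \cdots$ (as in Lemma~\ref{lemma:2.11} and Theorem~\ref{th:2.12}) stalls beyond $s=2$, namely the need for ${_2\Cl(S_i)} \subseteq \Cl(S_i)^G$ at each stage.

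Your approach bypasses the tower entirely and is genuinely different from anything in the paper. The paper's method for prime $n$ (Theorem~\ref{th:2.10} via Theorem~\ref{th:2.3}) leans on the Diederichsen--Reiner classification of $\mathbb Z[G]$-lattices for $|G|$ prime, and the $n=4$ case (Theorem~\ref{th:2.12}) uses the identification $\HH^1(H,S_2^\ast)\cong\Cl(S_2/S_1)$ together with special facts about $2$-torsion in $\Cl(S_1)$. Your determinant computation replaces both: once Theorem~\ref{th:2.8}(c) and Lemma~\ref{lemma:2.2} give $M^G=\langle 1\rangle$ and $\HH^1(G,M)=\langle 1\rangle$ for $M=T^\ast/k^\ast$, the identity $|M/(\sigma-1)M|=\prod_k\Phi_{p^k}(1)^{e_k}=p^{\sum e_k}$ forces $M=\langle 1\rangle$ for \emph{any} prime power $n=p^s$, not only $2^s$. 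So you have in fact proved more than the stated conjecture, and your closing remark correctly isolates the residual obstruction to the full Conjecture following Theorem~\ref{th:2.8}: for $n$ with two distinct prime factors, $\Phi_d(1)=1$ for composite $d\mid n$, and lattices of the form $\mathbb Z[\zeta_d]$ show the implication ``$M^G=\langle 1\rangle$ and $\HH^1(G,M)=\langle 1\rangle$ $\Rightarrow$ $M=\langle 1\rangle$'' fails there.
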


\section{Localization of a cyclic cover of projective space}
\label{sec:3}
In this section we consider the group of units on a ramified cyclic
cover $\pi : X \rightarrow U$ of affine varieties which is the
restriction of a cyclic cover of projective space $\pi:Y \rightarrow
\mathbb P^m$ to an open set.  We treat two special cases. In
Section~\ref{sec:3.1}, the localization of $\pi$ is such that along
the ``divisor at infinity'', $\pi$ is unramified.
Section~\ref{sec:3.3} considers the case where the ``divisor at
infinity'' is equal to the ramification divisor.  In
Section~\ref{sec:3.2}, these results are applied to the group of units
on an affine curve.  In Section~\ref{sec:3.4}, the results of this
section are applied to a special case of Example~\ref{example:1.4}.
Throughout Section~\ref{sec:3}, $k$ is an algebraically closed field
and if $G$ is the cyclic group whose action on $Y$ induces the
quotient morphism $\pi$, then we assume the order of $G$ is invertible
in $k$.

\subsection{Unramified at infinity}
\label{sec:3.1}
Start with a normal projective variety $Y$ of dimension $m > 0$,
together with $G = \langle \sigma \rangle$ a cyclic group of order $n$
acting on $Y$ such that the quotient morphism is $\pi : Y \rightarrow
\mathbb P^m$.  Next, we restrict $\pi$ to an affine open subset of
$\mathbb P^m$ which is the complement of a prime divisor that is split
by $\pi$.  Specifically, let $F \subseteq \mathbb P^m$ be an
irreducible hypersurface and $U = \mathbb P^m - F$ the affine open
complement.  Assume the irreducible hypersurface $F \subseteq\mathbb
P^m$ is split by $\pi$.  By this we mean $\pi : Y\times_{\mathbb P^m}
F \rightarrow F$ is unramified, and lying above $F$ are $n$
irreducible components.  If we write $\pi^{-1}(F) = F_1 \cup \dotsm
\cup F_n$, then for each $i$, $\pi : F_i \rightarrow F$ is an
isomorphism.  Taking $X$ to be $\pi^{-1}(U)$, the group $G$ acts on
$X$ and $\pi : X \rightarrow U$ is a ramified cyclic covering of
affine varieties.
The varieties defined so far make up the diagram:
\begin{equation}
  \label{eq:21}
  \begin{CD}
    X @>{\subseteq}>> Y @<{\supseteq}<<
        F_1+\dots +F_n \\
        @VV{\pi}V @VV{\pi}V @VV{\pi}V \\
    U @>{\subseteq}>>
    \mathbb P^m
    @<{\supseteq}<< F
  \end{CD}
\end{equation}
In this section we study the groups of units on the affine
varieties $U$ and $X$.
\begin{lemma}
  \label{lemma:3.1}
  \cite[Example~II.6.5.1]{H:AG} Let $F \subseteq\mathbb P^m$ be an
  irreducible hypersurface of degree $d$.  For the open affine $U =
  \mathbb P^m - F$,
  \begin{enumerate}[(a)]
  \item $\mathcal O^\ast(U)$, the group of units, is equal to
    $k^\ast$, and
  \item $\Cl(U)$, the class group, is a cyclic group of order $d$.
  \end{enumerate}
\end{lemma}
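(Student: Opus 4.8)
The plan is to feed the pair consisting of the normal projective variety $\mathbb P^m$ and its affine open subset $U = \mathbb P^m - F$ into the Nagata exact sequence \eqref{eq:101} used in the proof of Lemma~\ref{lemma:1.1}. Here the ``divisor at infinity'' is the single prime divisor $F$, so $r = 1$, and since $\mathcal O^\ast(\mathbb P^m) = k^\ast$ the sequence specializes to
\[
1 \rightarrow k^\ast \rightarrow \mathcal O^\ast(U)
\xrightarrow{\Div} \mathbb Z F
\xrightarrow{\chi} \Cl(\mathbb P^m)
\rightarrow \Cl(U) \rightarrow 0 .
\]
Both assertions then follow from understanding the single map $\chi$, which sends the generator $F$ to its divisor class $[F]$.

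Next I would recall the classical computation $\Cl(\mathbb P^m) \cong \mathbb Z$, the isomorphism sending a divisor to its degree and a hyperplane to a generator \cite[Proposition~II.6.4]{H:AG}. Writing $F = Z(g)$ for a homogeneous polynomial $g$ of degree $d$ and letting $H = Z(x_0)$ be the hyperplane at infinity, the rational function $g/x_0^d$ has divisor $F - dH$, so $[F] = d[H]$; hence, under the identification $\Cl(\mathbb P^m) = \mathbb Z$, the map $\chi$ is multiplication by $d$ on $\mathbb Z$. Since $d > 0$, $\chi$ is injective, so by exactness $\Div \colon \mathcal O^\ast(U) \to \mathbb Z F$ is the zero map; together with $\mathcal O^\ast(\mathbb P^m) = k^\ast$ this forces $\mathcal O^\ast(U) = k^\ast$, proving (a). The cokernel of multiplication by $d$ on $\mathbb Z$ is $\mathbb Z/d\mathbb Z$, and exactness of the sequence identifies this cokernel with $\Cl(U)$, proving (b).

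I do not anticipate any real obstacle: the only points needing attention are that the sequence \eqref{eq:101} genuinely applies here (it does, since $\mathbb P^m$ is normal and $U$ is the complement of a prime divisor) and the elementary linear-equivalence computation $[F] = d[H]$ in $\Cl(\mathbb P^m)$. Both are routine, so the argument is complete once the general exact sequence and the value of $\Cl(\mathbb P^m)$ are invoked; if one prefers, (b) alone can also be read off directly from \cite[Proposition~II.6.3]{H:AG} without the units part of the sequence.
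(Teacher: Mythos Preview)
Your proof is correct and follows essentially the same route as the paper: both specialize the Nagata sequence \eqref{eq:101} to $\mathbb P^m \supseteq U$, identify $\Cl(\mathbb P^m)\cong\mathbb Z$ via degree, observe that $\chi(F)=d$ so $\chi$ is injective, and read off parts (a) and (b) from exactness. Your write-up is just slightly more explicit in verifying $[F]=d[H]$ via the rational function $g/x_0^d$.
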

\begin{proof}
  The degree homomorphism $\deg : \Cl(\mathbb P^m) \rightarrow \mathbb
  Z$ is an isomorphism.  The exact sequence \eqref{eq:101} becomes
  \begin{equation}
    \label{eq:132}
    1 \rightarrow
    k^\ast
    \rightarrow
    \mathcal O^\ast(U)
    \xrightarrow{\Div}
    \mathbb Z F
    \xrightarrow{\chi}
    \Cl(\mathbb P^m)
    \rightarrow
    \Cl(U)
    \rightarrow 0
  \end{equation}
  where $\chi(F) = d$.  Therefore, $\chi$ is one-to-one.
\end{proof}
\begin{lemma}
  \label{lemma:3.2}
  In the context of Section~\ref{sec:3.1}, the following are true.
  \begin{enumerate}[(a)]
  \item $\left(\mathcal O^\ast(X)/k^\ast\right)^G = \langle 1\rangle$.
  \item The $\mathbb Z$-module $\mathcal O^\ast(X)/k^\ast$ is free and
    has rank less than or equal to $n-1$.
  \end{enumerate}
\end{lemma}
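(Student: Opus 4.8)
The plan is to establish (a) by the cohomological argument of Lemma~\ref{lemma:2.2} and (b) by a norm computation on the divisors at infinity; freeness in (b) comes for free from Lemma~\ref{lemma:1.1}.

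For (a) I will first record that $\mathcal O(X)^G = \mathcal O(U)$, since $\pi : X \to U$ is the quotient morphism for the $G$-action obtained by restricting the quotient $Y \to \mathbb P^m$ to the $G$-stable open set $X = \pi^{-1}(U)$. Because the inverse of a $G$-fixed unit is again $G$-fixed, it follows that $\mathcal O^\ast(X)^G = \mathcal O^\ast(U)$, which equals $k^\ast$ by Lemma~\ref{lemma:3.1}(a). Now apply cohomology to the short exact sequence of $G$-modules $1 \to k^\ast \to \mathcal O^\ast(X) \to \mathcal O^\ast(X)/k^\ast \to 1$, where $G$ acts trivially on $k^\ast$. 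Its low-degree terms read
\[
1 \to k^\ast \to \mathcal O^\ast(X)^G \to \bigl(\mathcal O^\ast(X)/k^\ast\bigr)^G \xrightarrow{\ \partial^0\ } \HH^1(G,k^\ast).
\]
Since $\mathcal O^\ast(X)^G = k^\ast$, the first arrow is an isomorphism, so $\partial^0$ is injective; and $\HH^1(G,k^\ast) = \mu_n$ is finite by Theorem~\ref{th:1.4}. Hence $\bigl(\mathcal O^\ast(X)/k^\ast\bigr)^G$ is finite. But by Lemma~\ref{lemma:1.1} the group $\mathcal O^\ast(X)/k^\ast$ is torsion free, so a finite subgroup of it is trivial, giving (a).

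For (b), freeness is immediate from Lemma~\ref{lemma:1.1}, so only the rank bound remains. Regard $X$ as an open subvariety of the normal projective variety $Y$; its divisor at infinity is $Y - X = \pi^{-1}(F) = F_1 + \dots + F_n$, a sum of $n$ distinct prime divisors on which $G$ acts transitively --- hence, $\langle\sigma\rangle$ being cyclic of order $n$, \emph{simply} transitively, so after relabeling $\sigma(F_i) = F_{i+1}$ with indices read modulo $n$. Since $\mathcal O^\ast(Y) = k^\ast$, the exact sequence~\eqref{eq:101} for the pair $(Y,X)$ identifies $\mathcal O^\ast(X)/k^\ast$ with the image of $\Div : \mathcal O^\ast(X) \to \bigoplus_{i=1}^n \mathbb Z F_i$, so it suffices to bound the rank of that image by $n-1$. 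Take $u \in \mathcal O^\ast(X)$ and write $\Div(u) = \sum_{i=1}^n a_i F_i$. The norm $N(u) = u\,\sigma(u)\dotsm\sigma^{n-1}(u)$ is visibly $G$-fixed, hence lies in $\mathcal O^\ast(X)^G = k^\ast$, so $\Div(N(u)) = 0$. On the other hand, $\Div$ is $G$-equivariant, so
\[
\Div(N(u)) = \sum_{j=0}^{n-1}\sigma^j\bigl(\Div(u)\bigr) = \Bigl(\sum_{i=1}^n a_i\Bigr)\bigl(F_1 + \dots + F_n\bigr),
\]
the last equality being the bookkeeping that every $F_\ell$ occurs with total coefficient $\sum_i a_i$. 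As the $F_i$ are distinct, $\sum_i a_i = 0$; thus the image of $\Div$ lies in the rank-$(n-1)$ subgroup $\{\sum a_i F_i : \sum a_i = 0\}$ of $\bigoplus_{i=1}^n \mathbb Z F_i$, and the bound follows.

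The only steps needing a sentence of justification are the identification $\mathcal O(X)^G = \mathcal O(U)$ and the simple transitivity of $G$ on $\{F_1,\dots,F_n\}$, both forced by the hypotheses of Section~\ref{sec:3.1}; I do not expect a genuine obstacle. If one prefers, (b) can be phrased module-theoretically: $\bigoplus_i \mathbb Z F_i$ is the regular representation $\mathbb Z G$, and $\mathcal O^\ast(X)/k^\ast \otimes \mathbb Q$, having no nonzero $G$-fixed vector by (a), embeds into the codimension-one $G$-stable complement of the trivial summand in $\mathbb Q G$.
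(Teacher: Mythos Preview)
Your proof is correct; both parts follow the spirit of the paper but take slightly different routes.

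For (a), the paper works inside the $G$-module $\bigoplus_i \mathbb Z F_i \cong \mathbb Z G$: it shows that the image of $\Div$ lands in $\image(D) = \ker(\deg)$ and then computes $(\image(D))^G = 0$ from the long exact sequence attached to $0 \to \image(N) \to \mathbb Z G \to \image(D) \to 0$. You instead use the intrinsic argument of Lemma~\ref{lemma:2.2}: from $\mathcal O^\ast(X)^G = \mathcal O^\ast(U) = k^\ast$ and the short exact sequence $1 \to k^\ast \to \mathcal O^\ast(X) \to \mathcal O^\ast(X)/k^\ast \to 1$ you embed $(\mathcal O^\ast(X)/k^\ast)^G$ into the finite group $\HH^1(G,k^\ast)$, and torsion-freeness finishes. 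This is a bit cleaner in that it never touches the completion $Y$.

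For (b), the paper proves $\image(\Div) \subseteq \ker(\deg)$ by invoking the fact that a principal divisor on the projective variety $Y$ has degree zero. You reach the same conclusion via the norm: $N(u) \in \mathcal O^\ast(X)^G = k^\ast$ forces $\sum a_i = 0$. Your argument uses only the $G$-action and the identification $\mathcal O^\ast(X)^G = k^\ast$ already established, rather than the projective geometry of $Y$; the paper's argument is perhaps more direct but depends on knowing that degree kills principal divisors on a normal projective variety. Your closing module-theoretic rephrasing (no $G$-fixed vectors, hence lands in the augmentation ideal of $\mathbb Q G$) is essentially the paper's viewpoint stated representation-theoretically.
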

\begin{proof}
  Consider the commutative diagram
  \begin{equation}
    \label{eq:133}
    \begin{CD}
      {1 \rightarrow\mathcal O^\ast(U)/k^\ast} @>>> {\mathbb Z F} @>>>
      {\Cl(\mathbb P^m)} @>>>
      {\Cl(U) \rightarrow 0} \\
      @VV{\pi^\sharp}V @VV{\pi^\ast}V @VVV
      @VVV      \\
      {1\rightarrow\mathcal O^\ast(X)/k^\ast} @>{\Div}>>
      {\bigoplus_{i=1}^n \mathbb Z F_i} @>{\chi}>> {\Cl(Y)}
      @>>>{\Cl(X)\rightarrow 0}
    \end{CD}
  \end{equation}
  The top row of \eqref{eq:133} is \eqref{eq:132} and the second row
  is \eqref{eq:101}.  The map $\pi^\ast$ is defined by $F\mapsto
  F_1+\dots+F_n$.  The maps on class groups are the natural maps.  The
  $G$-module $\bigoplus_{i=1}^n \mathbb Z F_i$ is free of rank one.
  By \cite[Lemma~10.34]{R:IHA} the sequence
  \begin{equation}
    \label{eq:3}
    \bigoplus_{i=1}^n \mathbb Z F_i
    \xrightarrow{N}
    \bigoplus_{i=1}^n \mathbb Z F_i
    \xrightarrow{D}
    \bigoplus_{i=1}^n \mathbb Z F_i
    \xrightarrow{\deg}
    \mathbb Z \rightarrow 0
  \end{equation}
  of $G$-modules is exact, where $N$ and $D$ are as in
  Theorem~\ref{th:1.4} and $\deg(a_1 F_1+ \dots + a_n F_n) = {a_1 +
    \dots + a_n}$.  The image of $N$ is the cyclic $\mathbb Z$-module
  generated by $\pi^\ast(F) = F_1 + \dots + F_n$.  It follows that
  \begin{equation}
    \label{eq:134}
    \HH^i\Bigl(G,\bigoplus_{i=1}^n \mathbb Z F_i\Bigr) =
    \begin{cases}
      \langle \pi^\ast(F)\rangle = \langle
      F_1+\dots+F_n\rangle & \text{if $i=0$}\\
      (0) & \text{if $i > 0$.}
    \end{cases}
  \end{equation}
  Eq. \eqref{eq:3} breaks up, yielding short exact sequences
  \begin{gather}
    \label{eq:4}
    0 \rightarrow \image{(D)} \rightarrow \bigoplus_{i=1}^n \mathbb Z
    F_i \xrightarrow{\deg}
    \mathbb Z \rightarrow 0\\
    \label{eq:5}
    0 \rightarrow \image{(N)} \rightarrow \bigoplus_{i=1}^n \mathbb Z
    F_i \rightarrow \image{(D)} \rightarrow 0
  \end{gather}
  of $G$-modules.  As a sequence of $\mathbb Z$-modules, \eqref{eq:4}
  is split-exact, so $\image(D)$ is a free $\mathbb Z$-module of rank
  $n-1$.  A principal divisor on $Y$ has degree $0$ (see
  \cite[Exercise~II.6.2]{H:AG}), so in \eqref{eq:133}, the image of
  $\Div$ is a subgroup of the kernel of the map $\deg$ of
  \eqref{eq:4}.  Thus, $\image(\Div)$ is a free $\mathbb Z$-module of
  rank at most $n-1$, which proves (b).  The image of $N$ is $\langle
  \pi^\ast(F)\rangle$, which is a trivial $G$-module.
  Theorem~\ref{th:1.4} shows $\HH^1\left(G, \langle \pi^\ast(F)\rangle
  \right) = (0)$.  Using \eqref{eq:134}, the long exact sequence of
  cohomology associated to \eqref{eq:5} simplifies to
  \begin{equation}
    \label{eq:135}
    0 \rightarrow
    \langle \pi^\ast(F)\rangle
    \xrightarrow{=}
    \langle \pi^\ast(F)\rangle
    \rightarrow \left( \image{(D)}\right)^G
    \xrightarrow{\partial^0}
    0\ldotp
  \end{equation}
  Hence $\left( \image{(D)}\right)^G = (0)$.  Since $\image(\Div)
  \subseteq \image(D)$, this proves (a).
\end{proof}
\begin{remark}
  The upper bound of Lemma~\ref{lemma:3.2}(b) is sharp, as shown in
  Example~\ref{example:1.7}.
\end{remark}
\begin{theorem}
  \label{th:3.3}
  In the above context, assume $n= p$ is a prime number and $H$ is the
  subgroup of $\Cl(Y)$ generated by the prime divisors $F_1, \dots,
  F_n$.  For the group $\mathcal O^\ast(X)/k^\ast$, there are two
  possibilities.  The first is $\mathcal O^\ast(X) =k^\ast$, in which
  case $H$ is a free $\mathbb Z$-module of rank $p$.  The second
  possibility is that $\mathcal O^\ast(X)/k^\ast$ is a free $\mathbb
  Z$-module of rank $p-1$ and in this case $H$ is isomorphic to an
  extension of $\mathbb Z$ by a finite group.
\end{theorem}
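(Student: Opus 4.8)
The plan is to read the dichotomy off the $\mathbb Z[G]$-module structure carried by the Nagata sequence in \eqref{eq:133}. That diagram contains the exact sequence
\[
  1 \rightarrow k^\ast \rightarrow \mathcal O^\ast(X)
  \xrightarrow{\Div} M \xrightarrow{\chi} \Cl(Y),
\]
where $M = \bigoplus_{i=1}^p \mathbb Z F_i$ and, by definition, $H = \image(\chi)$. So $\Div$ induces an isomorphism $\mathcal O^\ast(X)/k^\ast \xrightarrow{\sim} \ker(\chi) = \image(\Div)$, and this is an isomorphism of $\mathbb Z[G]$-modules once one checks $G$-equivariance: the divisor of $\sigma(u)$ is $\sigma$ applied to the divisor of $u$, and the class map $\bigDiv(Y) \rightarrow \Cl(Y)$ commutes with the $G$-action on $Y$. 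As recorded in the proof of Lemma~\ref{lemma:3.2}, $M$ is $\mathbb Z[G]$-free of rank one, and $\image(\Div)$ lies in the augmentation ideal $I = \ker(\deg) = (\sigma-1)M$ because a principal divisor on $Y$ has degree zero. Hence $W := \image(\Div) \cong \mathcal O^\ast(X)/k^\ast$ is a $\mathbb Z[G]$-submodule of $I$.

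Next I would determine $I$ up to isomorphism. Writing $\mathbb Z[G] = \mathbb Z[t]/(t^p-1)$ and using $t^p - 1 = (t-1)\Phi_p(t)$ with $\Phi_p(t) = 1 + t + \dots + t^{p-1}$, multiplication by $t-1$ gives a $\mathbb Z[G]$-module isomorphism $\mathbb Z[t]/(\Phi_p(t)) \xrightarrow{\sim} I$, where the source carries the $G$-action $\sigma \mapsto t$. Since $p$ is prime, $\Phi_p$ is irreducible over $\mathbb Q$, so $I \otimes_{\mathbb Z} \mathbb Q \cong \mathbb Q[t]/(\Phi_p(t)) = \mathbb Q(\zeta_p)$ is a simple $\mathbb Q[G]$-module of dimension $p-1$; this is compatible with Lemma~\ref{lemma:3.2}(a), which says the $G$-invariants of $\mathcal O^\ast(X)/k^\ast$ are trivial. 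Consequently the $\mathbb Q[G]$-submodule $W \otimes_{\mathbb Z} \mathbb Q$ is either $0$ or all of $\mathbb Q(\zeta_p)$. Since $\mathcal O^\ast(X)/k^\ast$ is finitely generated and torsion free by Lemma~\ref{lemma:1.1}, it is therefore either trivial or a free $\mathbb Z$-module of rank $p-1$. (Alternatively, one may invoke the structure theory of torsion-free modules over a cyclic group of prime order, exactly as in Theorem~\ref{th:2.3}: a torsion-free $\mathbb Z[G]$-module with trivial $G$-invariants and $\mathbb Z$-rank at most $p-1$ is either zero or a single $\mathbb Z[\zeta_p]$-ideal.)

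It remains to describe $H$ in the two cases. If $\mathcal O^\ast(X)/k^\ast$ is trivial, then $\chi$ is injective, so $H \cong M \cong \mathbb Z[G]$ is a free $\mathbb Z$-module of rank $p$. If instead $\mathcal O^\ast(X)/k^\ast$ is free of rank $p-1$, then from the short exact sequence $0 \rightarrow W \rightarrow M \xrightarrow{\chi} H \rightarrow 0$ we see that $H$ is a finitely generated abelian group of $\mathbb Z$-rank $p-(p-1) = 1$; hence its torsion subgroup $T$ is finite, $H/T \cong \mathbb Z$, and $0 \rightarrow T \rightarrow H \rightarrow \mathbb Z \rightarrow 0$ exhibits $H$ as an extension of $\mathbb Z$ by a finite group, as claimed.

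No single step is the main obstacle; the argument merely assembles the Nagata sequence, its $G$-equivariance, and the simplicity of $\mathbb Q(\zeta_p)$ as a $\mathbb Q[G]$-module. The one place that needs care is precisely that equivariance: only because $\mathcal O^\ast(X)/k^\ast$ is a $\mathbb Z[G]$-submodule of $I$ — not merely an abelian subgroup — is its rank forced into $\{0, p-1\}$, rather than being merely bounded by $p-1$ as in Lemma~\ref{lemma:3.2}(b).
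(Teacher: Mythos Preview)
Your proposal is correct and follows essentially the same route as the paper: both extract from \eqref{eq:133} the short exact sequence of $\mathbb Z[G]$-modules $1 \to \mathcal O^\ast(X)/k^\ast \to \bigoplus_i \mathbb Z F_i \to H \to 0$, note via Lemma~\ref{lemma:3.2} that $\mathcal O^\ast(X)/k^\ast$ has trivial $G$-invariants, and then invoke the structure theory for $\mathbb Z[G]$-modules with $G$ cyclic of prime order (as in Theorem~\ref{th:2.3}). Your version is slightly more self-contained in that you replace the black-box citation of \cite[Theorem~(74.3)]{CR:RTF} with the concrete observation that the augmentation ideal $I\cong\mathbb Z[\zeta_p]$ becomes the simple $\mathbb Q[G]$-module $\mathbb Q(\zeta_p)$ after tensoring with $\mathbb Q$, but this is exactly the relevant special case of that structure theorem, and you yourself note the equivalence.
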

\begin{proof}
  The second row of \eqref{eq:133} gives rise to the exact sequence
  \begin{equation}
    \label{eq:136}
    1 \rightarrow \mathcal O^\ast(X)/k^\ast \xrightarrow{\Div}
    \bigoplus_{i=1}^n \mathbb Z F_i \xrightarrow{\chi} H \rightarrow 0
  \end{equation}
  of $G$-modules.  As in Theorem~\ref{th:2.3}, the proof follows from
  Lemma~\ref{lemma:3.2}, the structure theory for a module over a
  cyclic group of order $p$, and \eqref{eq:136}.
\end{proof}

\subsection{The group of units on an affine curve}
\label{sec:3.2}
In this section we apply Theorem~\ref{th:3.3} to study the group of
units on an affine curve over $k=\mathbb C$, the field of complex
numbers.

Let $p$ be a prime number and $n \geq 2$ an integer such that $p \mid
n$.  Let $\lambda_1, \dots, \lambda_n$ be distinct elements of $k$ and
set $f(x) = (x-\lambda_1)\dotsm (x-\lambda_n)$.  Let $X = Z(y^p -
f(x))$, a nonsingular affine curve in $\mathbb A^2$. Let $\pi : X
\rightarrow \mathbb A^1$ be the morphism induced by $k[x] \rightarrow
\mathcal O(X)$.  Let $Y$ be the complete nonsingular model for $X$ and
$\pi : Y \rightarrow \mathbb P^1$ the extension of $\pi$.  Then $Y$ is
a cyclic cover of $\mathbb P^1$ of degree $p$, and we are in the
context of the introduction to Section~\ref{sec:3}.

Let $P_i$ denote the point on $X$ (and on $Y$) where $y = x-\lambda_i
= 0$.  Let $Q_i = \pi(P_i)$.  The map $\pi$ ramifies at $P_i$ and the
ramification index is $p$.  These are the only points where $\pi$ is
ramified.  Solve the Riemann-Hurwitz Formula
\cite[Corollary~IV.2.4]{H:AG} for the genus of $Y$ to get $g(Y) =
(p-1)(n-2)/2$.

If $n=p=2$, then $Y$ is a nonsingular plane curve of genus zero. It is
an exercise \cite[Exercise~I.1.1]{H:AG} to show that $\mathcal O(X)$
is isomorphic to $k[x,y]/(xy+1)$.  This is the special case of
Example~\ref{example:1.5} for which $n=2$. It follows that $\mathcal
O^\ast(X)/k^\ast$ is isomorphic to $\mathbb Z$. For the remainder of
Section~\ref{sec:3.2}, we will assume $n \geq 3$, hence $g(Y) \geq 1$.

The degree map $\deg : \Cl(Y) \rightarrow \mathbb Z$ is onto.  The
kernel of the degree map, denoted $\Cl^0(Y)$, is the jacobian variety.
By \cite[p.  64]{MR0282985}, $\Cl^0(Y)$ is an abelian variety of
dimension $g(Y)$.  By \cite[(iv), p. 42]{MR0282985}, $\Cl^0(Y)$ is a
divisible group.  Let $P_0$ be any closed point of $Y$.  The group of
divisors on $Y$ of degree zero, denoted $\bigDiv^0(Y)$, is generated
by the set $\{P-P_0 \mid P \in \bigDiv(Y)\}$.  In this context, the
exact sequence \eqref{eq:101} becomes
\begin{equation}
  \label{eq:137}
  1 \rightarrow   \mathcal O^\ast(Y)
  \rightarrow \mathcal O^\ast(Y - P_0)
  \rightarrow
  \mathbb Z P_0
  \xrightarrow{\chi}
  \Cl(Y)
  \rightarrow \Cl(Y -P_0)
  \rightarrow 0\ldotp
\end{equation}
Since $P_0$ has degree one, the degree map is a splitting for $\chi$,
so $\Cl(Y-P_0)$ is isomorphic to $\Cl^0(Y)$.

\begin{theorem}
  \label{th:3.4}
  Let $k = \mathbb C$ be the field of complex numbers and $p$ a prime
  number.  Say $G = \langle \sigma\rangle$ is a cyclic group of order
  $p$ acting on a nonsingular projective curve $Y$ such that the
  quotient map is $\pi: Y \rightarrow \mathbb P^1$.  Assume $Y$ has
  positive genus $g(Y) > 0$.  For a sufficiently general $Q\in \mathbb
  P^1$, if $X = \pi^{-1}(\mathbb P^1 - Q)$, then $\mathcal O^\ast(X) =
  k^\ast$.
\end{theorem}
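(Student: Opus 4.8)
The plan is to run everything through Theorem~\ref{th:3.3} and reduce the statement to the fact that a curve inside an abelian variety contains only countably many torsion points. Fix a point $Q\in\mathbb P^1$ that is not a branch point of $\pi$. Then $\pi^{-1}(Q)=\{P_1,\dots,P_p\}$ consists of $p$ distinct points which $\sigma$ permutes in a single $p$-cycle, and $X=\pi^{-1}(\mathbb P^1-Q)=Y-\{P_1,\dots,P_p\}$, so we are exactly in the situation of Section~\ref{sec:3.1} with $F=Q$. Thus Theorem~\ref{th:3.3} applies: writing $H=\langle [P_1],\dots,[P_p]\rangle\subseteq\Cl(Y)$ and $H^0=H\cap\Cl^0(Y)=\ker(\deg\colon H\to\mathbb Z)$, either $\mathcal O^\ast(X)=k^\ast$, or $\mathcal O^\ast(X)/k^\ast$ is free of rank $p-1$ and $H$ is an extension of $\mathbb Z$ by a \emph{finite} group, i.e.\ $H^0$ is finite. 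Since each $[P_i]$ has degree one, $H^0$ is generated by the classes $[P_{i+1}-P_i]$ ($i=1,\dots,p$, indices mod $p$). Hence it suffices to prove: for all but countably many $Q$, the group $H^0$ is infinite --- equivalently, at least one class $[\,\sigma(P)-P\,]$ with $P\in\pi^{-1}(Q)$ has infinite order in $\Cl^0(Y)$.

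The device is the morphism $\Theta\colon Y\to\Cl^0(Y)$, $v\mapsto[\,\sigma(v)-v\,]$; this is a morphism of varieties, being the difference (in the group $\Cl^0(Y)$) of the two Abel--Jacobi morphisms $v\mapsto[v-P_0]$ and $v\mapsto[\sigma(v)-P_0]$. The crucial point is that \emph{$\Theta$ is non-constant}. Indeed, if $\Theta$ were constant, then for all $v,w\in Y$ we would have $0=\Theta(v)-\Theta(w)=[\sigma v-\sigma w]-[v-w]=(\sigma_\ast-\mathrm{id})([v-w])$; the classes $[v-w]$ generate $\Cl^0(Y)$, so $\sigma_\ast=\mathrm{id}$ on $\Cl^0(Y)$, and then $g_\ast=\mathrm{id}$ on $\Cl^0(Y)$ for every $g\in G$. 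Consequently $p\cdot\Cl^0(Y)=\bigl(\sum_{g\in G}g_\ast\bigr)\Cl^0(Y)=\pi^\ast\pi_\ast\Cl^0(Y)\subseteq\pi^\ast\Cl^0(\mathbb P^1)=0$, and since $\Cl^0(Y)$ is divisible this forces $\Cl^0(Y)=0$, i.e.\ $g(Y)=0$, contrary to hypothesis. (Equivalently, $\sigma_\ast=\mathrm{id}$ on the Jacobian forces $\sigma$ to act trivially on $\HH^0(Y,\Omega^1_Y)$, whence $g(Y)=g(Y/G)=g(\mathbb P^1)=0$.)

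With $\Theta$ non-constant, being a morphism from the projective curve $Y$ it has finite fibres. Hence $\Theta^{-1}\bigl((\Cl^0 Y)_{\mathrm{tors}}\bigr)=\bigcup_{N\ge1}\Theta^{-1}\bigl(\Cl^0(Y)[N]\bigr)$ is a countable union of finite sets, hence countable, and so is its image $\Sigma:=\pi\bigl(\Theta^{-1}((\Cl^0 Y)_{\mathrm{tors}})\bigr)\subseteq\mathbb P^1$. Now suppose $Q\notin\Sigma$ and $Q$ is not a branch point. If $H^0$ were finite, then every element of $H^0$ --- in particular every $[\sigma(P_i)-P_i]=\Theta(P_i)$ --- would be torsion, so $\pi^{-1}(Q)\subseteq\Theta^{-1}((\Cl^0 Y)_{\mathrm{tors}})$ and $Q\in\Sigma$, a contradiction. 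Hence $H^0$ is infinite, and Theorem~\ref{th:3.3} gives $\mathcal O^\ast(X)=k^\ast$. The set $\Sigma$ together with the finitely many branch points is countable, so --- $\mathbb P^1(\mathbb C)$ being uncountable --- the conclusion holds for $Q$ outside a countable subset of $\mathbb P^1$, which is the meaning of ``sufficiently general'' here (compare the remarks in Section~\ref{sec:1.1}).

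The main obstacle is the non-constancy of $\Theta$, i.e.\ that $\sigma_\ast\ne\mathrm{id}$ on $\Cl^0(Y)$: this is precisely where the hypothesis $g(Y)>0$ enters, and it is the only geometric input. Everything else is bookkeeping with the dichotomy of Theorem~\ref{th:3.3} and a countability count of torsion points, both of which are routine.
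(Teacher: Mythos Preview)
Your proof is correct and follows essentially the same route as the paper: reduce to the dichotomy of Theorem~\ref{th:3.3}, and show that for all but countably many $Q$ the degree-zero part $H^0$ of $H$ is infinite by proving that the map $\Theta\colon P\mapsto[\sigma(P)-P]$ is non-constant and then counting torsion preimages.

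The only real difference lies in how non-constancy of $\Theta$ is established. The paper manipulates the principal divisor $\pi^\ast(\pi(P)-Q_0)$ to obtain the identity in \eqref{eq:139}, which in $\mathbb Z G$ amounts to $(1+2\sigma+\dots+p\sigma^{p-1})(1-\sigma)=N-p$; applied in $\Cl^0(Y)$ (where $N$ vanishes) this shows that the $\Theta$-images generate $p\Cl^0(Y)=\Cl^0(Y)$, which is uncountable. You instead argue the contrapositive directly: $\Theta$ constant forces $\sigma_\ast=\mathrm{id}$ on $\Cl^0(Y)$, whence $p\cdot\mathrm{id}=N=\pi^\ast\pi_\ast$ factors through $\Cl^0(\mathbb P^1)=0$. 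The two arguments are equivalent (both exploit $N=p$ modulo the image of $1-\sigma$), but yours is a bit more streamlined and makes explicit the morphism $\Theta$ and the countability of its torsion fibre, a step the paper leaves implicit.
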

\begin{proof}
  Let $P_0 \in Y$ be a point where $\pi$ is ramified. Let $Q_0 =
  \pi(P_0)$.  For an arbitrary $P \in Y$, the divisor $\pi(P) - Q_0$
  is a principal divisor on $\mathbb P^1$. Therefore,
  \begin{equation}
    \label{eq:138}
    \pi^\ast(\pi(P)-Q_0) =
    P + \sigma(P) + \dots + \sigma^{p-1}(P) - p P_0
  \end{equation}
  is a principal divisor on $Y$.  Manipulation of \eqref{eq:138} shows
  that
  \begin{equation}
    \label{eq:139}
    (P - \sigma(P))
    + 2 \sigma(P - \sigma(P)) 
    + 3 \sigma^2(P - \sigma(P)) 
    + \dots + p \sigma^{p-1}(P -  P_0)
  \end{equation}
  is a principal divisor on $Y$.  The group $\Cl^0(Y)$ is generated by
  the set of divisors $\{P-P_0 \mid P \in Y\}$ which is equal to the
  set $\{\sigma(P)-P_0 \mid P \in Y\}$.  By \eqref{eq:139} we see that
  $p \Cl^0(Y)$ is generated by the set of divisors $\{P- \sigma(P)
  \mid P \in Y\}$.  The group $\Cl^0(Y)$ is an abelian variety of
  dimension $g(Y)$ over $\mathbb C$. The subgroup of torsion elements
  in $\Cl^0(Y)$ make up a discrete group which is isomorphic to
  $(\mathbb Q/\mathbb Z)^{(2g)}$.  For a sufficiently general choice
  of $P$, $P-\sigma(P)$ generates an infinite subgroup of $\Cl^0(Y)$.
  Fix one such $P$.  Let $Q = \pi(P)$ and $X = Y - \pi^{-1}(Q) =
  \pi^{-1}(\mathbb P^1 - Q)$.  Let $H$ denote the subgroup of $\Cl(Y)$
  generated by $P, \sigma(P), \dots, \sigma^{p-1}(P)$.  We are in the
  context of Theorem~\ref{th:3.3}. Therefore $\mathcal O^\ast(X) =
  k^\ast$ if and only if $H$ is not an extension of $\mathbb Z$ by a
  finite group.  Consider the degree map $\deg : H \rightarrow \mathbb
  Z$, which splits since $\deg(P) = 1$.  By choice of $P$, the kernel
  of $\deg$ contains the infinite subgroup $\langle P- \sigma(P),
  \sigma(P)- \sigma^2(P), \dots, \sigma^{p-1}(P) - P \rangle$.  Since
  $H$ is not an extension of $\mathbb Z$ by a finite group,
  Theorem~\ref{th:3.3} says $H$ is a free $\mathbb Z$-module of rank
  $p$ and $\mathcal O^\ast(X) = k^\ast$.  The proof follows.
\end{proof}
\begin{proposition}
  \label{prop:3.5}
  Let $k = \mathbb C$ be the field of complex numbers.  Let $p$ be a
  prime number, $n \geq 3$ an integer such that $p \mid n$, and
  $\lambda_1, \dots, \lambda_n$ distinct elements in $k$.  Let $f(x) =
  \prod_{i=1}^n (x -\lambda_i)$ and $X = Z(y^p - f(x))$, an affine
  curve in $\mathbb A^2$.  For a sufficiently general choice of
  $\lambda_1, \dots, \lambda_n$, the group of units on $X$ is equal to
  $k^\ast$.
\end{proposition}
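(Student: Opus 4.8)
The plan is to recognize that $X$ sits inside the framework of Section~\ref{sec:3.1}, reduce the statement (via Theorems~\ref{th:3.3} and the dichotomy it provides) to a single non-torsion assertion about a divisor class on $Y$, and then run a ``thin set'' argument over $\mathbb C$ in which the one non-torsion example needed is produced from Theorem~\ref{th:3.4} by a M\"obius change of coordinates. More precisely: since $\deg f = n$ with $p\mid n$, a local computation at infinity ($x=u^{-1}$, $y=vu^{-n/p}$, giving $v^p=\prod_i(1-\lambda_i u)$ which equals $1$ at $u=0$) shows $\pi$ is unramified over $\infty$, so $\pi^{-1}(\infty)$ is a single free $G$-orbit $P_\infty,\sigma(P_\infty),\dots,\sigma^{p-1}(P_\infty)$ and $X=\pi^{-1}(\mathbb A^1)=Y-\pi^{-1}(\infty)$. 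We are thus exactly in the situation of diagram~\eqref{eq:21} with $U=\mathbb A^1=\mathbb P^1-\{\infty\}$ and $g(Y)=(p-1)(n-2)/2\ge 1$. Theorem~\ref{th:3.3} then says that either $\mathcal O^\ast(X)=k^\ast$ with $H=\langle\sigma^j(P_\infty)\rangle\subseteq\Cl(Y)$ free of rank $p$, or $\mathcal O^\ast(X)/k^\ast$ has rank $p-1$ with $H$ an extension of $\mathbb Z$ by a finite group. Because $\deg(\sigma^j P_\infty)=1$, the degree map splits off a $\mathbb Z$-summand and $H\cong\mathbb Z\oplus H_0$ with $H_0$ generated by the $\sigma$-orbit of $\xi:=[P_\infty-\sigma(P_\infty)]\in\Cl^0(Y)$; so $\mathcal O^\ast(X)=k^\ast$ \emph{if and only if} $\xi$ has infinite order in $\Cl^0(Y)$.

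Next I would put the curves into a family over the irreducible $\mathbb C$-variety $\mathcal M\subseteq\mathbb A^n$ of tuples $\lambda=(\lambda_1,\dots,\lambda_n)$ of distinct points, writing $Y_\lambda,X_\lambda,\xi_\lambda$ for the corresponding objects. In the chart above the $p$ points over $\infty$ are the constant sections $v=\zeta^j$, so they are honest disjoint sections of the family and no monodromy intervenes. For each integer $N\ge 1$ set
\[
\mathcal M_N=\{\lambda\in\mathcal M : N\xi_\lambda=0 \text{ in }\Cl^0(Y_\lambda)\}.
\]
A degree-zero line bundle on a curve has a nonzero section exactly when it is trivial, so $\mathcal M_N$ is the locus where $h^0\bigl(Y_\lambda,\mathcal O_{Y_\lambda}(N\sigma(P_\infty)-NP_\infty)\bigr)\ge 1$, hence Zariski closed by upper semicontinuity of cohomology. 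Granting, for the moment, that each $\mathcal M_N$ is a \emph{proper} closed subset, $\bigcup_{N\ge1}\mathcal M_N$ is a countable union of proper closed subsets of an irreducible variety over the uncountable field $\mathbb C$, so it is not all of $\mathcal M$; for any $\lambda$ outside it, $\xi_\lambda$ has infinite order and the first paragraph gives $\mathcal O^\ast(X_\lambda)=k^\ast$. This is the desired conclusion, and it also pins down the meaning of ``sufficiently general''.

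To see $\mathcal M_N\neq\mathcal M$ it suffices to exhibit a single $\mu\in\mathcal M$ with $\mathcal O^\ast(X_\mu)=k^\ast$, and this is where Theorem~\ref{th:3.4} enters. Fix any $\lambda\in\mathcal M$; then $g(Y_\lambda)>0$, so Theorem~\ref{th:3.4} supplies a point $Q\in\mathbb P^1$ — which we may also take distinct from all $\lambda_i$ — with $\mathcal O^\ast(\pi_\lambda^{-1}(\mathbb P^1-Q))=k^\ast$. Choose $\phi\in\mathrm{PGL}_2(k)$ with $\phi(Q)=\infty$ and put $\mu_i=\phi(\lambda_i)\in\mathbb A^1$ (finite since $\lambda_i\neq Q$). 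Substituting $x\mapsto\phi^{-1}(x)$ in $y^p=\prod_i(x-\lambda_i)$, clearing the denominator, and rescaling $y$ by the suitable power of the resulting linear factor — here the divisibility $p\mid n$ is exactly what lets one extract the needed $p$-th root — converts the equation into $(y')^p=\prod_i(x-\mu_i)$. This identifies $Y_\lambda$ with $Y_\mu$ compatibly with the $G$-actions and with $\phi$ on the base, carrying $\pi_\lambda^{-1}(Q)$ onto $\pi_\mu^{-1}(\infty)$; hence $X_\mu\cong\pi_\lambda^{-1}(\mathbb P^1-Q)$ and $\mathcal O^\ast(X_\mu)=k^\ast$, so $\mu\notin\mathcal M_N$ for every $N$.

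The hard part will be the technical underpinnings of the family argument: constructing $Y_\lambda\to\mathcal M$ as a smooth projective family with the sections $v=\zeta^j$ at infinity, and justifying the closedness of $\mathcal M_N$ via semicontinuity of $h^0$ for the family of degree-zero line bundles $\mathcal O(N\sigma(P_\infty)-NP_\infty)$ (equivalently, invoking that the $N$-torsion locus of a family of abelian varieties is closed). The M\"obius reduction in the third paragraph is elementary but must be carried out carefully enough to confirm that the transported cover really is again of the form $y^p=\prod(x-\mu_i)$ with $p\mid n$. This is also precisely where the hypothesis $k=\mathbb C$, and specifically the uncountability of $\mathbb C$, is used, as flagged in the introduction.
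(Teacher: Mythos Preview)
Your proof is correct and follows the paper's route, which consists of the single line ``Apply Theorem~\ref{th:3.4} to the nonsingular projective completion of $X$.'' You have supplied all the details the paper leaves implicit --- the local check that $\pi$ is unramified over $\infty$ when $p\mid n$, the M\"obius substitution that converts Theorem~\ref{th:3.4}'s ``general $Q$'' into a curve of the same shape $y^p=\prod(x-\mu_i)$, and the semicontinuity/uncountability argument that makes ``sufficiently general'' precise as the complement of a countable union of proper closed subsets.
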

\begin{proof}
  Apply Theorem~\ref{th:3.4} to the nonsingular projective completion
  of $X$.
\end{proof}
\begin{proposition}
  \label{prop:3.6}
  Let $k = \mathbb C$, the field of complex numbers.  Let $n \geq 3$
  and $\lambda_1, \dots, \lambda_n$ distinct elements in $k$.  Let
  $f(x) = \prod_{i=1}^n (x -\lambda_i)$ and $X = Z(y^2 - f(x))$ the
  corresponding affine hyperelliptic curve in $\mathbb A^2$.  For a
  sufficiently general choice of $\lambda_1, \dots, \lambda_n$, the
  group of units on $X$ is equal to $k^\ast$.
\end{proposition}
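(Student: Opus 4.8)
The plan is to observe that Proposition~\ref{prop:3.6} is simply the case $p=2$ of Proposition~\ref{prop:3.5}. Indeed, taking $p=2$, the polynomial $f(x)=\prod_{i=1}^n(x-\lambda_i)$ has $p\mid n$ precisely when $n$ is even, but more to the point the hypotheses needed are just that $f$ is separable (guaranteed by the $\lambda_i$ being distinct) and that $X=Z(y^2-f(x))$ is a nonsingular affine curve in $\mathbb A^2$, which holds by the Jacobian criterion since $f$ is square-free. So the first step is to reduce to the earlier statement, and then the only remaining issue is the parity/divisibility condition: Proposition~\ref{prop:3.5} demands $p\mid n$, so for $p=2$ one must assume $n$ even, whereas Proposition~\ref{prop:3.6} allows all $n\geq 3$. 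The resolution is that when $n$ is odd the ramification of $\pi:Y\rightarrow\mathbb P^1$ includes the point at infinity, so there are $n+1$ ramification points rather than $n$, but the key structural input — that $Y$ has positive genus $g(Y)\geq 1$ — still holds for all $n\geq 3$ by Riemann--Hurwitz.

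Concretely, I would argue as follows. Let $Y$ be the nonsingular projective completion of $X$, with $\pi:Y\rightarrow\mathbb P^1$ the degree-$2$ cover extending the projection to the $x$-line, and $G=\langle\sigma\rangle$ the hyperelliptic involution. Compute the genus: the branch points are the $\lambda_i$ together with $\infty$ when $n$ is odd, so $g(Y)=\lfloor (n-1)/2\rfloor$, which is $\geq 1$ exactly when $n\geq 3$. Thus in all cases covered by the statement, $g(Y)>0$. Now apply Theorem~\ref{th:3.4} directly to $Y$ with $p=2$: for a sufficiently general $Q\in\mathbb P^1$, the affine curve $X'=\pi^{-1}(\mathbb P^1-Q)$ has $\mathcal O^\ast(X')=k^\ast$. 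The final step is to identify $X'$ with $X=Z(y^2-f(x))$ for a suitable arrangement of the $\lambda_i$: choosing $Q=\infty$ corresponds exactly to removing the fibre over infinity, and Theorem~\ref{th:3.4}'s ``sufficiently general $Q$'' translates, after an automorphism of $\mathbb P^1$ moving a general point to $\infty$, into ``sufficiently general $\lambda_1,\dots,\lambda_n$'' for the curve $y^2-\prod(x-\lambda_i)$; this is precisely the bookkeeping already carried out in the deduction of Proposition~\ref{prop:3.5} from Theorem~\ref{th:3.4}.

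The main obstacle, such as it is, is purely one of hypothesis-matching rather than genuine mathematical difficulty: one must make sure that the ``general position'' in Theorem~\ref{th:3.4} (a general point $P\in Y$, equivalently general $Q=\pi(P)\in\mathbb P^1$) is compatible with the family of curves $y^2-\prod_{i=1}^n(x-\lambda_i)$ as the $\lambda_i$ vary, and in particular that the genericity can be arranged while keeping $Q$ a non-branch point (so that $\pi^{-1}(Q)$ consists of two points and the setup of Section~\ref{sec:3.1} with a split divisor applies). Since the hyperelliptic case is the most classical instance of the construction and the family of such curves is parametrized by the $\lambda_i$ in an obvious way, the genericity condition on $Q$ pulls back to a genericity condition on $(\lambda_1,\dots,\lambda_n)$, and the argument of Proposition~\ref{prop:3.5} applies verbatim. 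Hence the proof reduces to a single sentence invoking Proposition~\ref{prop:3.5} with $p=2$, together with the remark that the genus hypothesis $g(Y)\geq 1$ needed there holds for every $n\geq 3$ regardless of the parity of $n$.
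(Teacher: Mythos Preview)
Your treatment of the even case is correct and matches the paper: for $n$ even one simply invokes Proposition~\ref{prop:3.5} with $p=2$. The gap is in the odd case. You correctly observe that for $n$ odd the point $\infty\in\mathbb P^1$ is a branch point of $\pi$, so $\pi^{-1}(\infty)$ is a \emph{single} point of $Y$. But Theorem~\ref{th:3.4} and the whole setup of Section~\ref{sec:3.1} concern the removal of a fibre over a \emph{split} point $Q$, i.e.\ one with $\pi^{-1}(Q)$ consisting of $p$ distinct points; the proof of Theorem~\ref{th:3.4} explicitly uses the divisors $P-\sigma(P)$ coming from such a fibre. Your proposed fix, moving a general $Q$ to $\infty$ by an automorphism of $\mathbb P^1$, does not preserve the form of the equation: the old branch point $\infty$ is carried to a finite point, so the resulting affine model is $y^2=\prod_{j=1}^{n+1}(x-\mu_j)$ of degree $n+1$, not $n$. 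Thus Proposition~\ref{prop:3.5} is not being applied to the curve in the statement.

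The paper's argument for odd $n$ is both different and simpler, and in fact gives a stronger conclusion. Since $Y-X$ consists of a single point $P_0$, the Nagata sequence \eqref{eq:137}
\[
1 \rightarrow k^\ast \rightarrow \mathcal O^\ast(X) \rightarrow \mathbb Z P_0 \xrightarrow{\chi} \Cl(Y)
\]
shows that $\mathcal O^\ast(X)/k^\ast$ is the kernel of $\chi$. But $\deg(P_0)=1$, so $\chi$ is injective and $\mathcal O^\ast(X)=k^\ast$. Note that this holds for \emph{every} choice of $\lambda_1,\dots,\lambda_n$, not merely a general one; the genericity hypothesis is only needed when $n$ is even.
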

\begin{proof}
  Let $Y$ be the nonsingular projective completion for $X$.  If $n$ is
  even, this follows from Proposition~\ref{prop:3.5}.  If $n$ is odd,
  then $Y -X$ consists of only one point and we apply \eqref{eq:137}.
\end{proof}

\begin{example}
  \label{example:3.7}
  In Proposition~\ref{prop:3.6}, if $X$ is arbitrary, the group of
  units can be non-trivial.  For instance, let $f = x^4 -1$. In
  $\mathcal O(X)$, we have $1 = x^4 - y^2$, hence $x^2 -y$ is
  invertible and $\mathcal O^\ast(X) \neq k^\ast$.  For another
  example, let $f = x^4 +x$ and $X = Z(y^2 - f)$. In $\mathcal O(X)$,
  \[
  \begin{split}
    \left((x^3 + 1/2) + x y \right) \left((x^3 + 1/2) - x y\right)
    &= (x^3 + 1/2)^2 - x^2 y^2 \\
    &= x^6 + x^3 + 1/4 - x^2(x^4+x) \\
    &= 1/4
  \end{split}
  \]
  Hence $\mathcal O^\ast(X) \neq k^\ast$.
\end{example}

\subsection{The ramification divisor is at infinity}
\label{sec:3.3}
Start with a normal projective variety $Y$ of dimension $m > 0$,
together with $G = \langle \sigma \rangle$ a cyclic group of order $n$
acting on $Y$ such that the quotient morphism is $\pi : Y \rightarrow
\mathbb P^m$.  Assume the ramification divisor of $\pi$ on $Y$ is the
set of prime divisors $Q = \{Q_1, \dots, Q_r\}$ and that the
ramification index at each $Q_i$ is $n$. Let $\pi(Q) = P = \{P_1,
\dots, P_r\}$.  Let $U = \mathbb P^m - P$ and $X = Y - Q$. Then $\pi:
X \rightarrow U$ is a Galois cover with group $G$.
The varieties defined so far make up the diagram:
\begin{equation}
\label{eq:22}
  \begin{CD}
    X @>{\subseteq}>> Y @<{\supseteq}<<
       Q = Q_1+\dots + Q_r \\
        @VV{\pi}V @VV{\pi}V @VV{\pi}V \\
    U @>{\subseteq}>>
    \mathbb P^m
    @<{\supseteq}<< P = P_1 + \dots+P_r
  \end{CD}
\end{equation}
The Nagata
sequences \eqref{eq:101} for $U$ and $X$ give the rows of the
commutative diagram:
\begin{equation}
  \label{eq:140}
  \begin{CD}
    {1\rightarrow\mathcal O^\ast(U)/k^\ast} @>>> {\bigoplus^r_{i=1}
      \mathbb Z P_i} @>>> {\Cl(\mathbb P^m)} @>>>
    {\Cl(U) \rightarrow 0}\\
    @VV{\pi^\sharp}V @VV{\delta}V
    @VVV @VVV \\
    {1\rightarrow\mathcal O^\ast(X)/k^\ast} @>>> {\bigoplus^r_{i=1}
      \mathbb Z Q_i} @>{\chi}>> {\Cl(Y)} @>>> {\Cl(X) \rightarrow 0}
  \end{CD}
\end{equation}
By the map $\delta$, $P_i$ is mapped to $nQ_i$ \cite[p. 30]{F:DCG}.
We consider the group of units $\mathcal O^\ast(X)/k^\ast$ and the
image of $\chi$, which is equal to the subgroup of $\Cl(Y)$ generated
by $Q_1, \dots, Q_r$.
\begin{proposition}
  \label{prop:3.9}
  In the above context, if $\Cl(U) = \langle 0\rangle$, then
  \begin{enumerate}[(a)]
  \item $\mathcal O^\ast(U)/k^\ast$ is a free $\mathbb Z$-module of
    rank $r-1$,
  \item $\mathcal O^\ast(X)/k^\ast$ is a free $\mathbb Z$-module of
    rank $r-1$, and
  \item $\HH^1(U, \mu_\nu)$ is a free $\mathbb Z/\nu$-module of rank
    $r-1$, for all positive integers $\nu$ such that $\nu$ is
    invertible in $k$.
  \end{enumerate}
\end{proposition}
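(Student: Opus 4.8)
The plan is to deduce the three parts in turn, each from its predecessor, using the commutative diagram~\eqref{eq:140}, the Nagata sequence~\eqref{eq:101}, and the Kummer sequence~\eqref{eq:11}. First I would prove (a) from the top row of~\eqref{eq:140}, which is the Nagata sequence for the affine variety $U = \mathbb{P}^m - P$. Under the identification $\Cl(\mathbb{P}^m) \cong \mathbb{Z}$ by degree, the map $\bigoplus_{i=1}^r \mathbb{Z}P_i \to \Cl(\mathbb{P}^m)$ sends $P_i \mapsto \deg(P_i)$ (as in Lemma~\ref{lemma:3.1}), so its cokernel is $\mathbb{Z}/\gcd(\deg P_1, \dots, \deg P_r)\mathbb{Z}$. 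Hence the hypothesis $\Cl(U) = \langle 0\rangle$ is equivalent to $\gcd(\deg P_1, \dots, \deg P_r) = 1$ and forces this map to be onto $\Cl(\mathbb{P}^m) \cong \mathbb{Z}$. Its kernel, which by exactness equals $\image(\Div) \cong \mathcal{O}^\ast(U)/k^\ast$, is then a rank $r-1$ subgroup of the rank $r$ free group $\bigoplus_{i=1}^r \mathbb{Z}P_i$, and it is free because $\mathcal{O}^\ast(U)/k^\ast$ is torsion free by Lemma~\ref{lemma:1.1}. This gives (a).

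Next I would prove (b). By Lemma~\ref{lemma:1.1} the group $\mathcal{O}^\ast(X)/k^\ast$ is free, and by the bottom row of~\eqref{eq:140} it embeds via $\Div$ into $\bigoplus_{i=1}^r \mathbb{Z}Q_i$, so its rank is at most $r$; the task is to show it equals $r-1$. The lower bound follows from (a) once one checks that $\pi^\sharp : \mathcal{O}^\ast(U)/k^\ast \to \mathcal{O}^\ast(X)/k^\ast$ is injective, which holds because a regular function on $U$ whose pullback under $\pi$ is a nonzero constant must itself be that constant ($\pi$ being dominant). For the upper bound it is enough to show the image of $\chi$ in $\Cl(Y)$ is infinite, and this is the step I expect to be the main obstacle, since it is the only place that genuinely uses the ramified cover rather than formal properties of the Nagata sequence. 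Here I would argue that, $Q_i$ being the unique prime of $Y$ over $P_i$ with ramification index $n$, one has $\pi^\ast P_i = n Q_i$, hence $n[Q_i] = \deg(P_i)\, h$ in $\Cl(Y)$ with $h = \pi^\ast[\ell]$ for a hyperplane $\ell \subset \mathbb{P}^m$; since $\pi_\ast h = n[\ell] \neq 0$ in $\Cl(\mathbb{P}^m) \cong \mathbb{Z}$, the class $h$ has infinite order, so choosing $c_i \in \mathbb{Z}$ with $\sum_i c_i \deg(P_i) = 1$ (possible by the gcd condition from (a)) exhibits $\sum_i c_i[Q_i]$ as an element of $\image(\chi)$ of infinite order. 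Consequently $\mathcal{O}^\ast(X)/k^\ast \otimes_{\mathbb{Z}} \mathbb{Q} \cong \ker(\chi) \otimes_{\mathbb{Z}} \mathbb{Q}$ has dimension at most $r-1$, and together with the lower bound this gives (b). An alternative to the infinitude argument is a rational diagram chase on the square $\chi \circ \delta = \pi^\ast \circ \chi_U$ of~\eqref{eq:140} (writing $\chi_U$ for the top horizontal map), using that $\delta \otimes \mathbb{Q}$ is an isomorphism, $\pi^\ast$ is rationally injective because $\pi_\ast \pi^\ast = n$, and $\chi_U$ is rationally onto by (a).

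Finally, for (c), I would use that $U$ is an open subvariety of the regular variety $\mathbb{P}^m$, hence regular, so $\Pic(U) = \Cl(U) = \langle 0\rangle$ and the Kummer sequence~\eqref{eq:11} for $U$ collapses to an isomorphism $\HH^1(U, \mu_\nu) \cong \mathcal{O}^\ast(U)/(\mathcal{O}^\ast(U))^\nu$. Since $k$ is algebraically closed and $\nu$ is invertible in $k$, $(k^\ast)^\nu = k^\ast$; and since $\mathcal{O}^\ast(U)/k^\ast$ is free by (a), the sequence $1 \to k^\ast \to \mathcal{O}^\ast(U) \to \mathcal{O}^\ast(U)/k^\ast \to 1$ splits, so $\mathcal{O}^\ast(U)/(\mathcal{O}^\ast(U))^\nu \cong (k^\ast/(k^\ast)^\nu) \times (\mathbb{Z}/\nu)^{(r-1)} \cong (\mathbb{Z}/\nu)^{(r-1)}$, a free $\mathbb{Z}/\nu$-module of rank $r-1$. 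This proves (c).
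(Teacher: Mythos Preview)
Your proof is correct and follows essentially the same route as the paper: part~(a) from the top Nagata row of~\eqref{eq:140}, part~(b) by sandwiching the rank between the lower bound coming from the injectivity of $\pi^\sharp$ and the upper bound coming from the infinitude of $\image(\chi)$, and part~(c) from the Kummer sequence~\eqref{eq:11} together with $\Pic(U)=0$.

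The only notable difference is in the upper bound in~(b). You establish that $\image(\chi)$ is infinite via the projection formula $\pi_\ast\pi^\ast = n$ on class groups, together with the gcd condition extracted in~(a). The paper instead invokes directly that on the normal projective variety $Y$ a principal divisor has degree zero (Hartshorne, Exercise~II.6.2), so the class of $Q_1$ alone already has infinite order in $\Cl(Y)$; this one-line argument avoids both the pushforward and the gcd combination. Your argument is valid but more elaborate than needed; you could also have stopped at $n[Q_1]=\deg(P_1)\,h$ with $h$ of infinite order, which already forces $[Q_1]$ to have infinite order without choosing the $c_i$.
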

\begin{proof}
  In the top row of \eqref{eq:140}, $\Cl(U) = \langle 0\rangle$ and
  $\Cl(\mathbb P^m) \cong \mathbb Z$.  The sequence
  \[
  1 \rightarrow {\mathcal O^\ast(U)/k^\ast} \rightarrow
  {\bigoplus^r_{i=1} \mathbb Z P_i} \rightarrow \Cl(\mathbb P^m)
  \rightarrow 0
  \]
  is split-exact, which gives (a).  Since $\pi^\sharp$ in
  \eqref{eq:140} is one-to-one, we know the rank of the free $\mathbb
  Z$-module ${\mathcal O^\ast(X)/k^\ast}$ is at least $r-1$.  On $Y$ a
  principal divisor has degree zero (see
  \cite[Exercise~II.6.2]{H:AG}), so the subgroup of $\Cl(Y)$ generated
  by the divisor $Q_1$ is infinite cyclic.  In the second row of
  \eqref{eq:140}, the image of $\chi$ is infinite.  Therefore, the
  kernel of $\chi$ is a free $\mathbb Z$-module of rank at most $r-1$,
  which proves (b).  By \cite[Corollary~18.5]{F:DCG}, $\Pic(U) =
  \langle 0\rangle$ since $\Cl(U) = \langle 0\rangle$.  The exact
  sequence \eqref{eq:12} simplifies to
  \begin{equation}
    \label{eq:6}
    1 \rightarrow
    {\mathcal O^\ast(U)}/k^\ast
    \xrightarrow{\nu}
    {\mathcal O^\ast(U)}/k^\ast
    \rightarrow \HH^1(U, \mu_\nu) \rightarrow 0\ldotp
  \end{equation}
  Part (c) follows from (a) and \eqref{eq:6}.
\end{proof}
\begin{proposition}
  \label{prop:3.10}
  Let $k$ be an algebraically closed field of characteristic $p$,
  where $p = 0$ is allowed. Let $n$ be a positive integer such that
  $n$ is invertible in $k$.  Let $\pi\colon X \rightarrow U$ be a
  cyclic Galois cover of degree $n$ of integral varieties with group
  $G$.  Then modulo groups of $p$-torsion, the sequence
  \[
  0 \rightarrow \mathbb Z/n \rightarrow \HH^1(U,\mathbb Q/\mathbb Z)
  \xrightarrow{\pi^\ast} \HH^1(X,\mathbb Q/\mathbb Z)^G \rightarrow 0
  \]
  is exact. The kernel of $\pi^\ast$ is generated by the class
  represented by the cyclic covering $\pi: X \rightarrow U$.
\end{proposition}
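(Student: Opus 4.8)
The plan is to read the assertion off the five-term exact sequence of the Hochschild--Serre spectral sequence attached to the Galois cover $\pi\colon X\to U$ --- which I take to be finite and \'etale, as in the intended applications --- after two short group-cohomology computations over $\mathbb Q/\mathbb Z$. Since $\pi$ is a $G$-Galois cover, one has the spectral sequence $\HH^p\bigl(G,\HH^q(X,\mathbb Q/\mathbb Z)\bigr)\Rightarrow\HH^{p+q}(U,\mathbb Q/\mathbb Z)$, whose low-degree exact sequence is
\begin{multline*}
0\to\HH^1\bigl(G,\HH^0(X,\mathbb Q/\mathbb Z)\bigr)\to\HH^1(U,\mathbb Q/\mathbb Z)\\
\xrightarrow{\pi^\ast}\HH^1(X,\mathbb Q/\mathbb Z)^G\xrightarrow{d_2}\HH^2\bigl(G,\HH^0(X,\mathbb Q/\mathbb Z)\bigr).
\end{multline*}
Because $X$ is an integral variety it is connected, so $\HH^0(X,\mathbb Q/\mathbb Z)=\mathbb Q/\mathbb Z$ with trivial $G$-action; here the edge map to $\HH^1(X,\mathbb Q/\mathbb Z)^G$ is induced by the pullback $\pi^\ast$, and the $G$-action on $\HH^1(X,\mathbb Q/\mathbb Z)$ coming from the spectral sequence is the one induced by the deck transformations. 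It remains to identify the two outer terms.

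For this I would invoke Theorem~\ref{th:1.4} with the trivial $G$-module $M=\mathbb Q/\mathbb Z$: the operator $D=\sigma-1$ is zero and the norm $N=1+\sigma+\dots+\sigma^{n-1}$ is multiplication by $n$. Hence $\HH^1(G,\mathbb Q/\mathbb Z)={}_N(\mathbb Q/\mathbb Z)=\ker\bigl(n\colon\mathbb Q/\mathbb Z\to\mathbb Q/\mathbb Z\bigr)\cong\mathbb Z/n$, while $\HH^2(G,\mathbb Q/\mathbb Z)=(\mathbb Q/\mathbb Z)\big/n(\mathbb Q/\mathbb Z)=\langle 0\rangle$ because $\mathbb Q/\mathbb Z$ is divisible. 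Substituting these in, the five-term sequence collapses to the asserted
\[
0\to\mathbb Z/n\to\HH^1(U,\mathbb Q/\mathbb Z)\xrightarrow{\pi^\ast}\HH^1(X,\mathbb Q/\mathbb Z)^G\to0.
\]
The kernel of $\pi^\ast$ is the image of the inflation map, so it is the cyclic group of order $n$ generated by the inflation of a generator of $\HH^1(G,\mathbb Q/\mathbb Z)=\Hom(G,\mathbb Q/\mathbb Z)$; under the classification of cyclic covers recalled in Section~\ref{sec:1.2.1}, that inflation class is exactly the class of the covering $\pi\colon X\to U$, which is the last assertion.

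The one delicate point --- and the reason the conclusion is stated only modulo $p$-torsion --- is the coefficient sheaf when $k$ has characteristic $p>0$. If one wants to remain within the Kummer-theoretic framework of Section~\ref{sec:1.2.1}, one should replace $\mathbb Q/\mathbb Z$ throughout by $\varinjlim_{(\nu,p)=1}\mu_\nu$, which since $k$ is algebraically closed is abstractly the prime-to-$p$ part of $\mathbb Q/\mathbb Z$, and assemble the statement from the five-term sequences of the sheaves $\mu_\nu$. At each finite level the term $\HH^2(G,\mu_\nu)\cong\mathbb Z/\gcd(n,\nu)$ need not vanish, so those finite sequences are \emph{not} short exact; the saving point is that the transition map $\HH^2(G,\mu_\nu)\to\HH^2(G,\mu_{\nu n})$ is zero, whence $\varinjlim_\nu\HH^2(G,\mu_\nu)=\langle 0\rangle$ and short-exactness is recovered in the colimit, while $\varinjlim_\nu\HH^1(G,\mu_\nu)\cong\mathbb Z/n$. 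The gap between this prime-to-$p$ version and the literal $\mathbb Q/\mathbb Z$ statement is a $p$-group. I expect the genuine work to lie only in this bookkeeping --- tracking which groups are $p$-torsion under the colimit, and confirming that the inflation class is literally the covering class --- since the cohomological input is immediate from Theorem~\ref{th:1.4} and the spectral sequence.
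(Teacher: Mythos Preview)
Your proof is correct and follows essentially the same route as the paper: both invoke the Hochschild--Serre spectral sequence for $\pi$, identify $\HH^0(X,\mathbb Q/\mathbb Z)=\mathbb Q/\mathbb Z$ via integrality, compute $\HH^1(G,\mathbb Q/\mathbb Z)\cong\mathbb Z/n$ and $\HH^2(G,\mathbb Q/\mathbb Z)=0$ from Theorem~\ref{th:1.4}, and read off the short exact sequence (you via the five-term sequence, the paper via the equivalent filtration $E_0^1\supseteq E_1^1$). Your treatment of the characteristic-$p$ bookkeeping is more explicit than the paper's one-line ``reduce all groups modulo $p$-torsion,'' but the substance is the same.
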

\begin{proof}
  We give the proof for $p=0$. For positive characteristic, reduce all
  groups modulo $p$-torsion.  The Hochschild-Serre spectral sequence
  \cite[p.~105]{M:EC} for $\pi: X \rightarrow U$ is
  \begin{equation}
    \label{eq:141}
    \HH^i(G,\HH^j(X,\mathbb Q/\mathbb Z)) \Rightarrow
    \HH^{i+j}(U,\mathbb Q/\mathbb Z) \ldotp
  \end{equation}
  Since $X$ is integral, the group of global sections of the constant
  sheaf is $\HH^0(X,\mathbb Q/\mathbb Z) = \mathbb Q/\mathbb Z$.
  Because $G$ acts trivially on $\mathbb Q/\mathbb Z$,
  $\HH^0(X,\mathbb Q/\mathbb Z)^G = \mathbb Q/\mathbb Z$. By
  Theorem~\ref{th:1.4}:
  \[
  \HH^i(G,\mathbb Q/\mathbb Z) =
  \begin{cases}
    \mathbb Q/\mathbb Z & \text{if $i=0$,}\\
    \langle 0\rangle & \text{if $i > 0$ is even,}\\
    \mathbb Z/n &\text{if $i$ is odd.}
  \end{cases}
  \]
  Consider the filtration $\HH^1(U,\mathbb Q/\mathbb Z) = E^1_0
  \supseteq E^1_1\supseteq 0$. Then
  \[
  E_1^1 = E^{1,0}_\infty = E_2^{1,0} = \HH^1(G,H^0(X,\mathbb Q/\mathbb
  Z)) \cong \mathbb Z/n
  \]
  and since ${d_2^{0,1}} : \HH^1(X,\mathbb Q/\mathbb Z)^G \rightarrow
  \HH^2(G,\mathbb Q/\mathbb Z)$ is the zero map, $E_0^1/E_1^1 =
  E_\infty^{0,1} = E_2^{0,1}= \HH^1(X,\mathbb Q/\mathbb Z)^G$.
  Therefore we have the short exact sequence
  \[
  0 \rightarrow \mathbb Z/n\rightarrow \HH^1(U,\mathbb Q/\mathbb Z)
  \rightarrow \HH^1(X,\mathbb Q/\mathbb Z)^G \rightarrow 0\ldotp
  \]
  The inclusion $E_1^1 \subseteq E^1_0$ corresponds to the subgroup of
  $\HH^1(U,\mathbb Q/\mathbb Z)$ generated by the cyclic Galois cover
  $\pi:X \rightarrow U$.
\end{proof}
\begin{proposition}
  \label{lemma:3.11}
  In the context of Section~\ref{sec:3.3}, the following are true.
  \begin{enumerate}[(a)]
  \item In the commutative square
    \[
    \begin{CD}
      \HH^1(\mathbb P^m, \mu_n) @>>> \HH^1(U, \mu_n)
      \\
      @VVV @VV{\pi^\ast}V \\
      \HH^1(Y, \mu_n) @>{\rho}>> \HH^1(X, \mu_n)
    \end{CD}
    \]
    induced by the left side of \eqref{eq:22}, the image of $\pi^\ast$
    is a subgroup of the image of $\rho$.  We say that $\pi^\ast$
    splits the ramification of each $\xi\in \HH^1(U, \mu_n)$.
  \item If $r \geq 2$ and $\Cl(U) = \langle 0\rangle$, the image of
    $\pi^\ast$ is isomorphic to $(\mathbb Z/n)^{(r-2)}$.
  \end{enumerate}
\end{proposition}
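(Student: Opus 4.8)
The plan is to treat (a) and (b) separately. In (a) the square commutes automatically by functoriality of pullback --- indeed $\HH^1(\mathbb P^m,\mu_n)=\langle 0\rangle$ by the Kummer sequence \eqref{eq:11}, since $k^\ast$ is $n$-divisible and $\Pic(\mathbb P^m)\cong\mathbb Z$ has no $n$-torsion --- so the content of (a) is the inclusion $\image(\pi^\ast)\subseteq\image(\rho)$, i.e.\ that for every $\xi\in\HH^1(U,\mu_n)$ the cyclic covering of $X$ classified by $\pi^\ast\xi$ is unramified along $Q$ and hence extends to a covering of $Y$. The decisive input is that the ramification index of $\pi$ along each $Q_i$ equals $n$. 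For (b) the plan is to combine Proposition~\ref{prop:3.9}(c), which gives $\HH^1(U,\mu_n)\cong(\mathbb Z/n)^{(r-1)}$, with the Hochschild--Serre spectral sequence of $\pi\colon X\to U$, which pins down $\ker(\pi^\ast)$.

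For (a) I would work over the regular locus of $Y$, so that the constructions of Section~\ref{sec:1.2.1} apply, and use the cohomology sequence with supports \eqref{eq:9} with $F=\mu_n$. Removing the components of $Q$ from $Y$ one at a time and invoking purity at the generic point of each, one sees that $\image(\rho)$ is exactly the subgroup of classes in $\HH^1(X,\mu_n)$ whose residue along each $Q_i$, an element of $\HH^2_{Q_i}(Y,\mu_n)\cong\mathbb Z/n$, vanishes; similarly every $\xi\in\HH^1(U,\mu_n)$ has residues $\partial'_i\xi\in\HH^2_{P_i}(\mathbb P^m,\mu_n)\cong\mathbb Z/n$ along the $P_i$. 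Functoriality of \eqref{eq:9} along $\pi$ shows that the residue of $\pi^\ast\xi$ along $Q_i$ is the image of $\partial'_i\xi$ under the map $\pi^\ast\colon\HH^2_{P_i}(\mathbb P^m,\mu_n)\to\HH^2_{Q_i}(Y,\mu_n)$. The crucial point is that this map is zero: under the identifications in diagram~\eqref{eq:2}, the map $\HH^1_{P_i}(\mathbb P^m,\mathbb G_m)=\mathbb Z P_i\to\HH^1_{Q_i}(Y,\mathbb G_m)=\mathbb Z Q_i$ is compatible with the maps to $\Pic(\mathbb P^m)$ and to $\Pic(Y)=\Cl(Y)$, the latter being injective as in the proof of Proposition~\ref{prop:3.9}(b), and it carries $[P_i]$ to $\pi^\ast[P_i]=n[Q_i]$; hence it is multiplication by $n$, so the induced map on $\HH^2_{(-)}(\mu_n)\cong\mathbb Z/n$ is zero, i.e.\ $\pi^\ast(s_{P_i/\mathbb P^m})=n\,s_{Q_i/Y}=0$. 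Therefore $\pi^\ast\xi$ has vanishing residue along every $Q_i$, so $\pi^\ast\xi\in\image(\rho)$, which proves (a).

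For (b), Proposition~\ref{prop:3.9}(c) (with $\nu=n$, invertible in $k$) gives that $\HH^1(U,\mu_n)$ is free of rank $r-1$ over $\mathbb Z/n$. Since $X$ is integral and $G$ fixes $\mu_n\subseteq k^\ast$, one has $\HH^0(X,\mu_n)=\mu_n\cong\mathbb Z/n$ with trivial $G$-action, and Theorem~\ref{th:1.4} gives $\HH^1(G,\HH^0(X,\mu_n))\cong\mathbb Z/n$. The five-term exact sequence of the Hochschild--Serre spectral sequence $\HH^p(G,\HH^q(X,\mu_n))\Rightarrow\HH^{p+q}(U,\mu_n)$ for $\pi\colon X\to U$ begins
\[
0\rightarrow\mathbb Z/n\rightarrow\HH^1(U,\mu_n)\xrightarrow{\pi^\ast}\HH^1(X,\mu_n)^G,
\]
so $\ker(\pi^\ast)\cong\mathbb Z/n$; arguing as in Proposition~\ref{prop:3.10}, it is the cyclic subgroup generated by the class of the covering $\pi\colon X\to U$ itself, which pulls back to the split torsor on $X$. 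A cyclic subgroup of order $n$ of a free $\mathbb Z/n$-module is a direct summand: decompose $\mathbb Z/n$ into its prime-power factors, and over each local factor observe that a generator of the subgroup does not lie in the maximal ideal times the module, hence is part of a basis. Consequently the quotient is free of one lower rank, and
\[
\image(\pi^\ast)\cong\HH^1(U,\mu_n)/\ker(\pi^\ast)\cong(\mathbb Z/n)^{(r-1)}/(\mathbb Z/n)\cong(\mathbb Z/n)^{(r-2)},
\]
which proves (b).

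The step I expect to be the main obstacle is the compatibility $\pi^\ast(s_{P_i/\mathbb P^m})=n\,s_{Q_i/Y}$ used in part (a): one must verify that pullback along $\pi$ sends the fundamental class of $P_i$ in $\mathbb P^m$ to the ramification index times the fundamental class of $Q_i$ in $Y$. This is the codimension-one local incarnation of the divisor identity $\pi^\ast(P_i)=nQ_i$ --- the map $\delta$ of \eqref{eq:140} --- and follows from the construction of $s_{Z/X}$ recalled in Section~\ref{sec:1.2.1} (see also \cite[Section~VI.6]{M:EC}); the care required is to localize at the generic points of $P_i$ and $Q_i$, where the ambient varieties are regular, since a priori $Y$ and the $Q_i$ need not be everywhere nonsingular, and to carry out the routine excision and purity bookkeeping identifying $\HH^2_Q(Y,\mu_n)$ with $\bigoplus_i\HH^2_{Q_i}(Y,\mu_n)$.
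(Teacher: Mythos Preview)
Your proof is correct and follows essentially the same approach as the paper. For (a) you use precisely the paper's strategy: pass to the regular locus, use the long exact sequence with supports in $Q$ (resp.\ $P$) for $\mu_n$, and show that the pullback on $\HH^2_{(-)}(\,\cdot\,,\mu_n)$ is zero because the map on $\HH^1_{(-)}(\,\cdot\,,\mathbb G_m)\cong\mathbb Z$ is multiplication by the ramification index $n$; for (b) you run the Hochschild--Serre argument for $\mu_n$ directly where the paper instead cites Proposition~\ref{prop:3.10} (stated for $\mathbb Q/\mathbb Z$) and then Proposition~\ref{prop:3.9}, and you supply the direct-summand step that the paper leaves implicit.
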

\begin{proof}
  By $\Sing{Y}$ we denote the singular locus of $Y$.  Since $Y$ is
  normal, the codimension of $\Sing{Y}$ in $Y$ is at least two. The
  codimension of $\Sing{Q}$ in $Y$ is at least two.  Since $X
  \rightarrow U$ is unramified, we know $X$ is nonsingular.  Let
  $\omega$ be a closed subset of $Q$, of codimension greater than or
  equal to two, such that $Y -\omega$ and $Q - \omega$ are
  nonsingular.  Because $\pi$ is a finite morphism, $\pi(\omega)$ has
  codimension at least two in $\mathbb P^m$.  By
  \cite[Lemma~VI.9.1]{M:EC}, $\HH^1(Y, \mu_n) = \HH^1(Y-\omega,
  \mu_n)$.  In this proof, we are going to utilize the fundamental
  classes of the divisors $Q_i-\omega \subseteq Y-\omega$ and
  $P_i-\pi(\omega) \subseteq \mathbb P^m - \pi(\omega)$.  For the
  background material, see Section~\ref{sec:1.2.1}.  For the two
  divisors we combine the counterparts of \eqref{eq:10} as in
  Diagram~\eqref{eq:140}.  The resulting square
  \begin{equation}
    \label{eq:143}
    \begin{CD}
      {\mathbb Z = \HH^1_{P_i-\pi(\omega)}(\mathbb P^m-\pi(\omega),
        \mathbb G_m)} @>>> {\HH^1(\mathbb P^m-\pi(\omega), \mathbb
        G_m)
        = \Cl(\mathbb P^m)}\\
      @V{\alpha_i}VV
      @VV{\pi^\ast}V \\
      {\mathbb Z = \HH^1_{Q_i-\omega}(Y-\omega, \mathbb G_m)}
      @>>>{\HH^1(Y-\omega, \mathbb G_m)=\Cl(Y)}
    \end{CD}
  \end{equation}
  commutes.  As in \eqref{eq:140}, $\alpha_i$ maps $1$ to $n$.  In the
  top row of \eqref{eq:143}, $1$ maps to the divisor class of $P_i$ in
  $\Cl(\mathbb P^m)$.  In the bottom row of \eqref{eq:143}, $1$ maps
  to the divisor class of $Q_i$ in $\Cl(Y)$.  Consider the commutative
  diagram
  \begin{equation}
    \label{eq:144}
    \begin{CD}
      {\mathbb Z = \HH^1_{P_i-\pi(\omega)}(\mathbb P^m-\pi(\omega),
        \mathbb G_m)} @>{\partial^1}>>
      {\HH^2_{P_i-\pi(\omega)}(\mathbb P^m-\pi(\omega), \mu_n) =
        \mathbb Z/n}\\
      @VV{\alpha_i}V
      @VV{\beta_i}V   \\
      {\mathbb Z = \HH^1_{Q_i-\omega}(Y-\omega, \mathbb G_m)}
      @>{\partial^1}>> {\HH^2_{Q_i-\omega}(Y-\omega, \mu_n)=\mathbb
        Z/n}
    \end{CD}
  \end{equation}
  where $\alpha_i$ is the left side of \eqref{eq:143} and the rows are
  from \eqref{eq:2}. Both of the connecting maps $\partial^1$ are
  onto.  Thus $\beta_i$ is the zero map.  The divisor $P -
  \pi(\omega)$ decomposes into the disjoint union $\cup_i (P_i -
  \pi(\omega))$.  Therefore the cohomology with supports decomposes
  into a direct sum:
  \[
  \HH^2_{P - \pi(\omega)}(\mathbb P^m-\pi(\omega), \mu_n) =
  \bigoplus_{i=1}^r \HH^2_{P_i-\pi(\omega)}(\mathbb P^m-\pi(\omega),
  \mu_n)\ldotp
  \]
  The similar decomposition occurs for $Q - \omega \subseteq
  Y-\omega$.  Consider the commutative diagram
  \begin{equation}
    \label{eq:145}
    \begin{CD}
      {\HH^1(\mathbb P^m-\pi(\omega), \mu_n)} @>>> {\HH^1(U, \mu_n)}
      @>{\partial^0}>> {\bigoplus_{i=1}^r
        \HH^2_{P_i-\pi(\omega)}(\mathbb P^m-\pi(\omega), \mu_n)}\\
      @VVV @VV{\pi^\ast}V @VV{\beta=\beta_1+\dots+\beta_r}V
      \\
      {\HH^1(Y-\omega, \mu_n)} @>{\rho}>> {\HH^1(X, \mu_n)} @>>>
      {\bigoplus_{i=1}^r \HH^2_{Q_i-\omega}(Y-\omega, \mu_n)}
    \end{CD}
  \end{equation}
  whose rows are from \eqref{eq:9}, hence are exact. The map $\beta$
  is the summation of the maps $\beta_i$ in \eqref{eq:144}, hence is
  the zero map.  This proves that the image of $\pi^\ast$ is a
  subgroup of the image of $\rho$, which gives (a).  In part (b),
  Proposition~\ref{prop:3.10} implies that the kernel of $\pi^\ast$ in
  \eqref{eq:145} is cyclic of order $n$.  By
  Proposition~\ref{prop:3.9}, the image of $\pi^\ast$ in
  \eqref{eq:145} is isomorphic to $(\mathbb Z/n)^{(r-2)}$.
\end{proof}
\begin{proposition}
  \label{prop:3.12}
  In the notation of Section~\ref{sec:3.3}, assume $\Cl(U) = \langle
  0\rangle$ and $r \geq 2$.
  \begin{enumerate}[(a)]
  \item The Galois cover $X \rightarrow U$ corresponds to adjoining
    the $n$th root of an invertible function on $U$.  That is, there
    exists a unit $f \in \mathcal O^\ast(U)$ such that $\mathcal O(X)$
    is isomorphic to $\mathcal O(U)[z]/(z^n- f)$.
  \item Let $H$ denote the image of $\chi$ in \eqref{eq:140}, which is
    the subgroup of $\Cl(Y)$ generated by $Q_1, \dots, Q_r$.  As an
    abelian group, $H$ is isomorphic to $\mathbb Z \oplus (\mathbb
    Z/n)^{(r-2)}$.
  \item The quotient group $\mathcal O^\ast(X)/\mathcal O^\ast(U)$ is
    cyclic of order $n$ and is generated by the element $z$ of
    Part~(a).
  \end{enumerate}
\end{proposition}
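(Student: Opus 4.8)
The plan is to extract all three statements from the cohomological machinery already assembled in Section~\ref{sec:3.3}, chiefly Proposition~\ref{lemma:3.11} and the Kummer sequence \eqref{eq:11}. For part~(a), since $X \rightarrow U$ is a cyclic Galois cover of degree $n$, it corresponds to a class $\xi$ in $\HH^1(U, \mu_n)$ (using that $\mu_n \cong \mathbb Z/n$ over the algebraically closed field $k$, and that $\HH^1(U,\mu_n)$ classifies such covers, as recalled after \eqref{eq:11}). Because $\Cl(U) = \langle 0 \rangle$ gives $\Pic(U) = \langle 0 \rangle$, the sequence \eqref{eq:11} collapses to an isomorphism $\mathcal O^\ast(U)/\left(\mathcal O^\ast(U)\right)^n \xrightarrow{\ \sim\ } \HH^1(U,\mu_n)$. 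So $\xi$ is the image of some unit $f \in \mathcal O^\ast(U)$, and the cover it classifies is precisely $\mathcal O(U)[z]/(z^n - f)$; since this cover is $X \rightarrow U$ up to isomorphism, part~(a) follows.

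For part~(c), once $f$ is fixed as in (a), we have $\mathcal O(X) = \mathcal O(U)[z]/(z^n - f)$ with $z^n = f \in \mathcal O^\ast(U)$, so $z \in \mathcal O^\ast(X)$ and the coset of $z$ generates a cyclic subgroup of $\mathcal O^\ast(X)/\mathcal O^\ast(U)$. The minimal polynomial of $z$ over the fraction field of $\mathcal O(U)$ is $z^n - f$ — here one should note $f$ is not a $d$th power for any $d \mid n$, $d > 1$, since $\xi$ has exact order $n$ in $\HH^1(U,\mu_n)$ as it classifies a connected degree-$n$ cyclic cover (this is essentially the "kernel of $\pi^\ast$ is generated by $\pi$" statement together with the fact that $\pi$ has order $n$). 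Hence $z$ has order exactly $n$ in $\mathcal O^\ast(X)/\mathcal O^\ast(U)$. To see it generates the whole quotient, observe that any unit $u \in \mathcal O^\ast(X)$ can be written $u = \sum_{j=0}^{n-1} a_j z^j$ with $a_j \in \mathcal O(U)$, and a norm/trace argument — or, more cleanly, comparing with the second row of \eqref{eq:140} — forces $u$ to be a monomial $a z^j$ with $a \in \mathcal O^\ast(U)$; this identifies $\mathcal O^\ast(X)/\mathcal O^\ast(U) \cong \mathbb Z/n$.

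For part~(b), combine the two rows of \eqref{eq:140}. Since $\Cl(U) = \langle 0 \rangle$, the top row shows $\Cl(\mathbb P^m) \cong \mathbb Z$ is generated by (the class of) $P_1$, and in fact $\bigoplus_{i=1}^r \mathbb Z P_i \rightarrow \Cl(\mathbb P^m)$ sends each $P_i$ to a positive multiple of a generator. The map $\chi$ in the bottom row has image $H = \langle Q_1, \dots, Q_r \rangle \subseteq \Cl(Y)$; its kernel is $\mathcal O^\ast(X)/k^\ast$, which by Proposition~\ref{prop:3.9}(b) is free of rank $r-1$. So from $1 \rightarrow \mathcal O^\ast(X)/k^\ast \rightarrow \bigoplus_{i=1}^r \mathbb Z Q_i \rightarrow H \rightarrow 0$ we get that $H$ has rank $r - (r-1) = 1$. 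For the torsion: $\chi \circ \delta$ factors through $\Cl(\mathbb P^m) \cong \mathbb Z$, and $\delta$ multiplies by $n$ on each summand, so the torsion of $H$ is controlled by $n$; the precise identification $H_{\mathrm{tors}} \cong (\mathbb Z/n)^{(r-2)}$ should come from the snake lemma applied to \eqref{eq:140} (as in Example~\ref{example:1.4}) together with the computation in Proposition~\ref{lemma:3.11}(b) that the image of $\pi^\ast$ on $\HH^1(-,\mu_n)$ is $(\mathbb Z/n)^{(r-2)}$ — this group is Kummer-dual to the torsion we want. The main obstacle I expect is pinning down this torsion exactly rather than merely up to a bounded group: one has to track carefully how the rank-one free part splits off $H$ (using that some $Q_i$ maps to a non-torsion element, which follows since a principal divisor on $Y$ has degree zero while $Q_i$ need not, cf. Proposition~\ref{prop:3.9}(b)) and then read the torsion from the cokernel of the integer matrix encoding $\delta$ composed with $\bigoplus \mathbb Z Q_i \rightarrow \Cl(Y)$, exactly the style of matrix computation carried out in \eqref{eq:26}–\eqref{eq:27}.
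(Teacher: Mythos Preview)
Your argument for~(a) is correct and matches the paper. The gap is in~(b) and~(c), and you have them in the wrong order of dependence. Your direct attack on~(c) --- that an unspecified ``norm/trace argument'' forces every unit to be a monomial $a z^j$ --- is precisely the statement $\mathcal O^\ast(X)/\mathcal O^\ast(U)=\langle z\rangle$ you are trying to prove, so it is circular; and ``comparing with the second row of~\eqref{eq:140}'' does not yield this without further input. In the paper,~(c) is deduced \emph{from}~(b): stacking the two rows of~\eqref{eq:140} (with $\Cl(\mathbb P^m)$ and $H$ on the right) and passing to cokernels gives a third exact row $0\to\mathcal O^\ast(X)/\mathcal O^\ast(U)\to(\mathbb Z/n)^{(r)}\to H/\image(\pi^\ast)\to 0$; once~(b) is known, a $\Tor(-,\mathbb Z/n)$ computation on the column $0\to\mathbb Z\to H\to H/\image(\pi^\ast)\to 0$ shows $H/\image(\pi^\ast)\cong(\mathbb Z/n)^{(r-1)}$, and the cokernel row then forces $\mathcal O^\ast(X)/\mathcal O^\ast(U)\cong\mathbb Z/n$.

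For~(b) you correctly get rank one and correctly point to Proposition~\ref{lemma:3.11}(b), but ``Kummer-dual'' misdirects, and the matrix method of Example~\ref{example:1.4} only gives a \emph{surjection} $\mathbb Z\oplus(\mathbb Z/n)^{(r-2)}\twoheadrightarrow H$, never equality. What is missing is a two-sided bound on the invariant factors $1,n_1,\dots,n_{r-2}$ of $\Div\colon\mathcal O^\ast(X)/k^\ast\hookrightarrow\bigoplus\mathbb Z Q_i$. For the lower bound, Proposition~\ref{lemma:3.11}(a) says each class in the image of $\pi^\ast\colon\HH^1(U,\mu_n)\to\HH^1(X,\mu_n)$ lifts to $\HH^1(Y,\mu_n)\cong{_n\Pic(Y)}$; since it originates from a unit on $U$ it dies in $\Pic(X)$, so the lift lies in $\ker(\Cl(Y)\to\Cl(X))=H$. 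Thus $(\mathbb Z/n)^{(r-2)}$ literally embeds in $H$, giving $n\mid n_i$ for each $i$ (and the remaining invariant factor is $1$ because $z$ is a local parameter at each $Q_i$, so $\Div(z)$ is primitive). For the upper bound, the Snake Lemma on the stacked diagram shows $H/\image(\pi^\ast)$ is annihilated by $n$, and applying $\Tor_1(-,\mathbb Q/\mathbb Z)$ to $0\to\mathbb Z\to H\to H/\image(\pi^\ast)\to 0$ then shows the exponent of $H_{\mathrm{tors}}$ divides $n$. The two bounds together pin down $n_i=n$ exactly.
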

\begin{proof}
  We are given that $\Cl(U) = \langle 0\rangle$. The exact sequence of
  Kummer Theory \eqref{eq:11} implies any cyclic Galois extension of
  $\mathcal O(U)$ is of the form $\mathcal O(U)[\sqrt[n]{f}]$ for some
  $f \in \mathcal O^\ast(U)$. This is (a).  The diagram
  \begin{equation}
    \label{eq:13}
    \begin{CD}
      {1} @>>> {\mathcal O^\ast(U)/k^\ast} @>>> {\bigoplus^r_{i=1}
        \mathbb Z P_i} @>>> {\Cl(\mathbb P^m)}
      @>>> 0 \\
      @.  @VV{\pi^\sharp}V @VV{\delta}V
      @VV{\pi^\ast}V  \\
      {1} @>>> {\mathcal O^\ast(X)/k^\ast} @>{\Div}>>
      {\bigoplus^r_{i=1} \mathbb Z Q_i} @>{\chi}>> {H} @>>>
      {0} \\
      @.
      @VVV @VVV @VVV \\
      1 @>>> \frac{\mathcal O^\ast(X)}{\mathcal O^\ast(U)} @>>>
      {\bigoplus^r_{i=1} (\mathbb Z/n) Q_i} @>>>
      \frac{H}{\image(\pi^\ast)} @>>> {0}
    \end{CD}
  \end{equation}
  commutes, where the top two rows are from \eqref{eq:140} and are
  exact. The bottom row of \eqref{eq:13} is made up of the cokernels
  of the maps $\pi^\sharp$, $\delta$, and $\pi^\ast$.  Consider the
  map
  \begin{equation}
    \label{eq:16}
    {\mathcal O^\ast(X)/k^\ast} \xrightarrow{\Div}
    {\bigoplus^r_{i=1} \mathbb Z Q_i} 
  \end{equation}
  of the second row of \eqref{eq:13}.  By Proposition~\ref{prop:3.9}
  the groups $\mathcal O^\ast(U)/k^\ast$ and ${\mathcal
    O^\ast(X)/k^\ast}$ are free $\mathbb Z$-modules of rank $r-1$.  To
  prove (b), we show that the invariant factors of \eqref{eq:16} are
  $1, n, \dots, n$, where $n$ occurs with multiplicity $r-2$.  Using
  Part~(a), let $z \in \mathcal O^\ast(X)$ satisfy $z^n = f$.  By
  assumption the ramification index of $\pi$ at each $Q_i$ is $n$.
  Therefore $z$ is a local parameter at each $Q_i$.  This implies
  $\Div(z)$ generates a direct summand of ${\bigoplus^r_{i=1} \mathbb
    Z Q_i}$. This proves $1$ is an invariant factor of \eqref{eq:16}.
  Let $n_1, \dots, n_{r-2}$ denote the other invariant factors of
  \eqref{eq:16}.  The group ${H}$ contains the image of
  ${\HH^1(U,\mu_n)} \rightarrow {\HH^1(X,\mu_n)}$ which by
  Proposition~\ref{lemma:3.11} is a direct sum of $r-2$ copies of
  $\mathbb Z/n$. This proves $n$ is a divisor of $n_i$, for $1 \leq i
  \leq r-2$.

  Since $\pi^\sharp$ is one-to-one, its cokernel is finite.  Since
  $\delta$ is one-to-one and the groups on the bottom row of
  \eqref{eq:13} are finite, the Snake Lemma \cite[Theorem~6.5]{R:IHA}
  implies the kernel of $\pi^\ast$ is finite.  Since $\Cl(\mathbb P^m)
  \cong \mathbb Z$, we conclude that $\pi^\ast$ is one-to-one.  The
  Snake Lemma implies the bottom row of \eqref{eq:13} is exact. In
  particular, we know the groups ${\mathcal O^\ast(X)}/{\mathcal
    O^\ast(U)}$ and ${H}/{\image(\pi^\ast)}$ are both annihilated by
  $n$.  The functor $()\otimes_\mathbb Z \mathbb Q/\mathbb Z$ applied
  to the third column of \eqref{eq:13} gives the exact sequence
  \begin{equation}
    \label{eq:15}
    0 \rightarrow
    \Tor_1(H, \mathbb Q/\mathbb Z)
    \rightarrow
    \Tor_1(H/\image(\pi^\ast), \mathbb Q/\mathbb Z)
    \rightarrow
    \mathbb Q/\mathbb Z
  \end{equation}
  which shows the exponent of the torsion subgroup of $H$ is a divisor
  of $n$. This shows each invariant factor of \eqref{eq:16} satisfies
  $n_i = n$, for $1 \leq i \leq r-2$, which proves (b).

  Apply the functor $()\otimes_{\mathbb Z} \mathbb Z/n$ to the third
  column of \eqref{eq:13} to get the exact sequence
  \begin{multline}
    \label{eq:17}
    0 \rightarrow \Tor_1(H, \mathbb Z/n) \rightarrow
    \Tor_1(H/\image(\pi^\ast), \mathbb Z/n) \rightarrow \mathbb Z/n
    \xrightarrow{\pi^\ast\otimes 1} \\
    H \otimes \mathbb Z/n \rightarrow (H/\image{\pi^\ast}) \otimes
    \mathbb Z/n \rightarrow 0\ldotp
  \end{multline}
  In \eqref{eq:13} the image of $\pi^\ast$ is a subgroup of $nH$, so
  in \eqref{eq:17} the map $\pi^\ast\otimes 1$ is the zero map.  By
  Part~(b), $\Tor_1(H, \mathbb Z/n)$ is isomorphic to $(\mathbb
  Z/n)^{(r-2)}$.  Since $H/\image(\pi^\ast)$ is annihilated by $n$,
  the first part of \eqref{eq:17} reduces to the exact sequence
  \begin{equation}
    \label{eq:20}
    0 \rightarrow
    (\mathbb Z/n)^{(r-2)}
    \rightarrow
    H/\image(\pi^\ast)
    \rightarrow \mathbb Z/n \rightarrow 0
  \end{equation}
  which is split exact.  Eq. \eqref{eq:20} shows $H/\image(\pi^\ast)$
  is isomorphic to $(\mathbb Z/n)^{(r-1)}$.  It follows that the
  bottom row of \eqref{eq:13} is split exact.  Therefore, $\mathcal
  O^\ast(X)/\mathcal O^\ast(U)$ is a cyclic group of order $n$. By the
  computations above, we know the element $z$ is a generator for this
  group.  This proves (c).
\end{proof}

In the notation of Section~\ref{sec:3.3}, Proposition~\ref{prop:3.13}
is the $r=1$ case.
\begin{proposition}
  \label{prop:3.13}
  Let $P$ be an irreducible divisor on $\mathbb P^m$ of degree $n > 1$
  and $U = \mathbb P^m - P$.  Let $d>1$ be a divisor of $n$.  Let $X
  \rightarrow U$ be a cyclic Galois cover of degree $d$ corresponding
  to a generator of $\HH^1(U,\mu_d)$.  Then $\mathcal O^\ast(X) =
  k^\ast$.
\end{proposition}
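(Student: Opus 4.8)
The strategy is to realize $X$ as the complement of a prime divisor inside an explicit normal projective variety $\bar X$, to show that this divisor at infinity is \emph{irreducible} and that its class has infinite order in $\Cl(\bar X)$, and then to read off $\mathcal O^\ast(X)=k^\ast$ from the Nagata sequence \eqref{eq:101}. By Lemma~\ref{lemma:3.1}, $\mathcal O^\ast(U)=k^\ast$ and $\Cl(U)\cong\mathbb Z/n$, so the statement is the natural ``$r=1$'' analogue of Proposition~\ref{prop:3.12}. Since the class of $X\to U$ generates the cyclic group $\HH^1(U,\mu_d)$, the covering $X$ is connected, and being \'etale over the regular variety $U$ it is normal, hence an integral variety; moreover $X\to U$ is finite. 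Let $\bar X$ be the normalization of $\mathbb P^m$ in $k(X)$. Then $\pi$ extends to a finite morphism $\bar X\to\mathbb P^m$, the variety $\bar X$ is normal and projective, so $\mathcal O^\ast(\bar X)=k^\ast$, and $X=\pi^{-1}(U)$; hence the divisor at infinity of $X$ in $\bar X$ is $\bar X-X=\pi^{-1}(P)$.

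Next I would compute in an affine chart. Choose a hyperplane $H_0\neq P$, identify $\mathbb A^m=\mathbb P^m-H_0$ with coordinate ring $A=k[x_1,\dots,x_m]$, and let $g\in A$ be the dehomogenization of a defining form of $P$; since $P$ is irreducible and $P\neq H_0$, the polynomial $g$ is irreducible of degree $n$. Then $U':=U\cap\mathbb A^m=\mathbb A^m-Z(g)$ has coordinate ring $A[g^{-1}]$, and Lemma~\ref{lemma:1.3} gives $\Cl(U')=\langle 0\rangle$ and $\mathcal O^\ast(U')=k^\ast\times\langle g\rangle$. The Kummer sequence \eqref{eq:11} then identifies $\HH^1(U',\mu_d)$ with $\langle g\rangle/\langle g^d\rangle$, so the restricted covering $\pi^{-1}(U')\to U'$ is, after absorbing a constant (possible since $k$ is algebraically closed), isomorphic to $\Spec\bigl(A[g^{-1}][z]/(z^d-g^j)\bigr)$ for some integer $j$; since $X$, hence the dense open $\pi^{-1}(U')$, is irreducible and $g$ is a prime element of $A$, this forces $\gcd(j,d)=1$, and hence $k(X)=k(\mathbb A^m)(\sqrt[d]{g})$. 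Consequently $\pi^{-1}(\mathbb A^m)=\Spec T$ with $T=A[z]/(z^d-g)$, precisely the normal domain studied in Section~\ref{sec:2}; and as in the proof of Theorem~\ref{th:2.8}, the ideal $Tz$ is the unique minimal prime of $gT$. Thus $\pi^{-1}(Z(g))$ has the prime divisor defined by $Tz$ as its only component.

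To pass back to $\bar X$: since $\pi$ is finite, $\pi^{-1}(P)$ is equidimensional of dimension $m-1$, so each of its components is a prime divisor dominating $P$; because $P\cap H_0$ has codimension at least two in $\mathbb P^m$, such a component meets $\pi^{-1}(\mathbb A^m)$ in a dense open set and therefore equals the closure in $\bar X$ of the prime divisor defined by $Tz$. Hence $\bar X-X$ is a single prime divisor, call it $Q$. Finally, proper pushforward of cycles gives $\pi_\ast Q=[k(Q):k(P)]\,P$, which has strictly positive degree in $\Cl(\mathbb P^m)\cong\mathbb Z$; since $\pi_\ast$ carries principal divisors to principal divisors and every principal divisor on the projective variety $\mathbb P^m$ has degree zero, no nonzero multiple of $Q$ is principal, so $Q$ has infinite order in $\Cl(\bar X)$. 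Therefore the map $\mathbb Z Q\to\Cl(\bar X)$ in
\[
1\rightarrow\mathcal O^\ast(\bar X)\rightarrow\mathcal O^\ast(X)\xrightarrow{\Div}\mathbb Z Q\rightarrow\Cl(\bar X)\rightarrow\Cl(X)\rightarrow 0
\]
is injective, so $\Div$ vanishes on $\mathcal O^\ast(X)$ and $\mathcal O^\ast(X)=\mathcal O^\ast(\bar X)=k^\ast$.

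The step I expect to need the most care is the irreducibility of $\bar X-X$. Handling it via the affine chart and Theorem~\ref{th:2.8}, rather than by a direct tame-ramification computation at the discrete valuation ring $\mathcal O_{\mathbb P^m,P}$, has the advantage that the conclusion uses no information about $\mathcal O^\ast(T)$ itself; in particular it does not depend on the open problem of Section~\ref{sec:2} of deciding whether $\mathcal O^\ast(T)=k^\ast$. The one genuinely delicate point is the return to $\bar X$: one must rule out a component of $\pi^{-1}(P)$ lying entirely over $P\cap H_0$, which is exactly what the codimension estimate supplies, and that estimate needs only that $P$ and $H_0$ are distinct irreducible hypersurfaces.
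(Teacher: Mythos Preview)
Your argument is correct and follows the same skeleton as the paper's: compactify $X$ to a normal projective variety, identify the divisor at infinity as a single prime divisor $Q$, observe that $Q$ has infinite order in the class group because its degree (or pushforward) is positive while principal divisors have degree zero, and conclude from the Nagata sequence~\eqref{eq:101}. The paper's proof is much terser: it simply invokes the standard cyclic-cover construction of \cite[\S~I.17]{MR749574} to produce $\pi:Y\to\mathbb P^m$ totally ramified along $P$, so that $\pi^{-1}(P)=Q$ is automatically a single prime divisor, and then cites \cite[Exercise~II.6.2]{H:AG} for the degree argument. You instead build the compactification by normalization and verify the irreducibility of $\pi^{-1}(P)$ by hand---passing to an affine chart, identifying the function field via Kummer theory as $k(\mathbb A^m)(\sqrt[d]{g})$, and using the fact from Section~\ref{sec:2} that $Tz$ is the unique minimal prime over $gT$. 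This is more self-contained and makes explicit exactly why $r=1$ here, at the cost of some length; the paper's version trades that transparency for a one-line citation.
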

\begin{proof}
  The Galois cover $X$ exists by Lemma~\ref{lemma:3.1} and Kummer
  Theory \eqref{eq:11}.  Let $\pi:Y \rightarrow \mathbb P^m$ be the
  cyclic cover which ramifies along $P$ (for example, see
  \cite[Section~I.17]{MR749574}). Let $\pi^{-1}(P) = Q$. In
  Diagram~\eqref{eq:140}, $\chi(Q)$ is non-zero because a principal
  divisor on $Y$ has degree zero (see \cite[Exercise~II.6.2]{H:AG}).
\end{proof}
\subsection{The group of units on an affine variety}
\label{sec:3.4}
In this section we apply results from Section~\ref{sec:3.3} to study
the group of units on the affine variety defined by an equation of the
form $f_1 \dotsm f_r = 1$, where the polynomials $f_i$ are distinct
irreducible forms in $k[x_1, \dots, x_m]$.
\begin{theorem}
\label{theorem:3.15}
Let $m \geq 2$ and $r \geq 2$. Let $f_1, \dots, f_r$ be irreducible
forms in $k[x_1, \dots, x_m]$ defining distinct varieties $Z(f_1),
\dots, Z(f_r)$ in $\mathbb A^m$.  Assume the set $\{\deg(f_1), \dots,
\deg(f_r)\}$ generates the unit ideal in $\mathbb Z$.  If $X$ denotes
the affine variety in $\mathbb A^m$ defined by $f_1\dotsm f_r = 1$,
then $\mathcal O^\ast(X)/k^\ast = \langle f_1\rangle \times \dotsm
\times \langle f_{r-1}\rangle$.
\end{theorem}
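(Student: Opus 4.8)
The plan is to present $X$ as a ramified cyclic cover fitting the set-up of Section~\ref{sec:3.3}, with $\mathbb{P}^{m-1}$ in the role of the projective space, and then to invoke Proposition~\ref{prop:3.12}. Write $d_i = \deg(f_i)$ and $n = d_1 + \dotsm + d_r$, so that $n \geq r \geq 2$. In $\mathbb{P}^{m-1} = \Proj{k[x_1, \dots, x_m]}$ let $P_i = Z(f_i)$, which is a prime divisor of degree $d_i$ because $f_i$ is irreducible; put $P = P_1 + \dotsm + P_r$ and $U = \mathbb{P}^{m-1} - P$. Since $\Cl(\mathbb{P}^{m-1}) \cong \mathbb{Z}$ with $[P_i]$ corresponding to $d_i$, and the $d_i$ generate the unit ideal, sequence \eqref{eq:101} shows $\Cl(U) = \langle 0\rangle$ and identifies $\mathcal{O}^\ast(U)/k^\ast$ with the free $\mathbb{Z}$-module $\{\,\prod f_i^{a_i} \mid \sum a_i d_i = 0\,\}$ of rank $r-1$. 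Homogenizing the defining equation with a new variable $x_0$, put $\bar X = Z(f_1 \dotsm f_r - x_0^n) \subseteq \mathbb{P}^m$. Then $\bar X - X = L_1 + \dotsm + L_r$ with $L_i = Z(x_0, f_i) \cong P_i$, and projection away from $[1:0:\dotsm:0]$ (which is not a point of $\bar X$) is a finite degree-$n$ morphism $\bar X \to \mathbb{P}^{m-1}$ on which $\mu_n$ acts by $[x_0 : x_1 : \dotsm : x_m] \mapsto [\zeta x_0 : x_1 : \dotsm : x_m]$. Let $Y$ be the normalization of $\bar X$; the action lifts to $Y$ and $\pi : Y \to \mathbb{P}^{m-1}$ is the quotient by this cyclic group of order $n$.

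The next step is to check that $(\pi : Y \to \mathbb{P}^{m-1},\ U,\ X,\ Q = Q_1 + \dotsm + Q_r)$ really is an instance of the set-up of Section~\ref{sec:3.3}. By Euler's identity, $\sum_j x_j\, \partial_{x_j}(f_1 \dotsm f_r) = n\, f_1 \dotsm f_r$, which is a unit on $X$ (where $f_1 \dotsm f_r = 1$); hence the gradient of the defining polynomial never vanishes on $X$, so $X$ is smooth, in particular normal. Moreover $X$, equivalently $\bar X$, is irreducible: for a general linear form $\ell$, the rational function $g = (f_1 \dotsm f_r)/\ell^n$ on $\mathbb{P}^{m-1}$ has $\Div(g) = P_1 + \dotsm + P_r - n\, Z(\ell)$, so $v_{P_1}(g) = 1$ is prime to $n$, $g$ is not a proper power in $k(\mathbb{P}^{m-1})$, and $k(\bar X) = k(\mathbb{P}^{m-1})(\sqrt[n]{g})$ is a field. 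Since $X$ is normal, finite over $U$, and has function field $k(Y)$, it is the normalization of $U$ in $k(Y)$, so $X = \pi^{-1}(U)$ as an open subscheme of $Y$ and $Y - X = \pi^{-1}(P)$. Finally, $v_{P_i}(g) = 1$ is prime to $n$ for each $i$, while $v_D(g) \equiv 0 \pmod n$ for every other prime divisor $D$ of $\mathbb{P}^{m-1}$; so there is a single prime divisor $Q_i$ of $Y$ over each $P_i$, with ramification index $n$, and $\pi$ is unramified elsewhere. Thus $\pi$ is a cyclic cover of degree $n$ whose ramification divisor is exactly the divisor at infinity $Q_1 + \dotsm + Q_r$ of $X$, and $r \geq 2$, $\Cl(U) = \langle 0\rangle$.

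Now I would read off the answer from Section~\ref{sec:3.3}. Proposition~\ref{prop:3.9}(b) gives that $M := \mathcal{O}^\ast(X)/k^\ast$ is free of rank $r-1$, and Proposition~\ref{prop:3.12}(c), together with injectivity of $\pi^\sharp$, gives $M/N \cong \mathbb{Z}/n$, where $N$ is the image of $\mathcal{O}^\ast(U)/k^\ast$ in $M$. Each $f_i$ is a unit of $\mathcal{O}(X)$, so $M$ contains $L := \langle f_1, \dots, f_r\rangle$. The only multiplicative relation among $f_1, \dots, f_r$ modulo $k^\ast$ in $\mathcal{O}(X)$ is the one generated by $f_1 \dotsm f_r = 1$: taking $\mu_n$-weights (the weight of $f_i$ is $d_i \bmod n$) reduces any relation to one of the form $\prod f_i^{b_i} = \lambda \in k^\ast$ with $\sum b_i d_i = 0$, and then $\Div(\prod f_i^{b_i}) = \sum b_i P_i$ on $\mathbb{P}^{m-1}$ must vanish, forcing all $b_i = 0$. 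Hence $L$ is free of rank $r-1$ with basis $f_1, \dots, f_{r-1}$, and $N \subseteq L \subseteq M$. Writing $\chi(a_1, \dots, a_r) = \sum a_i d_i$, one identifies $L$ with $\mathbb{Z}^r/\mathbb{Z}(1,\dots,1)$ and $N$ with the image of $\ker\chi$, so that $L/N \cong \mathbb{Z}^r/(\ker\chi + \mathbb{Z}(1,\dots,1)) \cong \mathbb{Z}/n$ — here $\chi$ is onto $\mathbb{Z}$ because the $d_i$ generate the unit ideal, and $(1,\dots,1) \mapsto n$. Therefore $[M:N] = n = [L:N]$, which forces $[M:L] = 1$, i.e. $M = L = \langle f_1\rangle \times \dotsm \times \langle f_{r-1}\rangle$, as claimed.

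The hard part will be the middle step: turning the concrete hypersurface $f_1 \dotsm f_r = 1$ into the axiomatic cyclic-cover picture of Section~\ref{sec:3.3} — establishing that $X$ is smooth and irreducible, and, above all, pinning down the ramification of $\pi : Y \to \mathbb{P}^{m-1}$ (total ramification of index $n$, exactly one prime above each $P_i$, and nothing else). By comparison, once Proposition~\ref{prop:3.12} applies, upgrading from ``$f_1, \dots, f_{r-1}$ generate a finite-index subgroup'' (the conclusion one gets along the lines of Example~\ref{example:1.4}) to ``they generate $\mathcal{O}^\ast(X)/k^\ast$'' is just the index count above.
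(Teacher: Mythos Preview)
Your proof is correct and follows the same overall plan as the paper: recognize $X$ as the open part of a degree-$n$ cyclic cover $Y \to \mathbb P^{m-1}$ ramified exactly along the $P_i = Z(f_i)$, verify that this fits the framework of Section~\ref{sec:3.3} with $\Cl(U)=0$, and then invoke Proposition~\ref{prop:3.12}. You are actually more careful than the paper in checking the hypotheses (smoothness of $X$ via Euler's identity, irreducibility and ramification via the valuation $v_{P_i}(f_1\dotsm f_r/\ell^n)=1$), and your passage through the normalization of $\bar X$ is harmless since $\bar X$, being a hypersurface regular in codimension one, is already normal.

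The one genuine difference is the endgame. The paper uses Proposition~\ref{prop:3.12}(b): it writes down the matrix $C$ of $\Div(f_i/x_0^{d_i})$ in $\bigoplus \mathbb Z L_i$, computes its invariant factors $1,0,n,\dots,n$, and compares $\Cokernel\beta \cong \mathbb Z\oplus(\mathbb Z/n)^{r-2}$ with $H\cong\mathbb Z\oplus(\mathbb Z/n)^{r-2}$ via the Snake Lemma and a $\Tor$ argument to force $\Cokernel\alpha=0$. You instead use Proposition~\ref{prop:3.12}(c): knowing $[M:N]=n$, you compute $[L:N]=|\mathbb Z^r/(\ker\chi+\mathbb Z(1,\dots,1))|=n$ directly, whence $M=L$. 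Your index count is shorter and avoids the matrix manipulation and the $\Tor$ sequence; the paper's route has the mild advantage of exhibiting the structure of $H\subseteq\Cl(Y)$ along the way.
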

\begin{proof}
  For $i = 1, \dots, r$, let $d_i=\deg(f_i)$. Let $n = d_1 +\dots +
  d_r$.  Let $Y$ be the projective variety in $\mathbb P^{m} =
  \Proj{(k[x_0, x_1, \dots, x_m])}$ defined by $f_1\dotsm f_r =
  x_0^n$.  Let $Z_0 = Z(x_0)$ denote the hyperplane at infinity.  For
  each $i = 1, \dots, r$, let $F_i = Z(f_i)$ be the hypersurface in
  $\mathbb P^{m}$ defined by $f_i$ and set $L_i = F_i \cap Z_0$.  Note
  that $L_i$ refers to a subvariety of $Z_0$ as well as of $Y$.  The
  divisor at infinity on $Y$ is $L_1+ \dots+ L_r$.  We are given that
  $L_1, \dots, L_r$ are distinct prime divisors in $Z_0$.  View $Y$ as
  a cyclic cover of $Z_0 = \Proj{k[x_1, \dots, x_m]}$ and let $\pi : Y
  \rightarrow Z_0$ be the projection.  The ramification divisor of
  $\pi$ is the set $\{L_1, \dots, L_r\}$, and the ramification index
  of $\pi$ at each $L_i$ is one.  If we let $X$ be the open affine
  $Y-L_1-\dots - L_r$, then $\mathcal O(X) = k[x_1, \dots,
  x_m]/(f_1\dotsm f_r -1)$ is obtained by dehomogenizing with respect
  to $x_0$.  If we let $U = Z_0 - L_1 - \dots - L_r$, then $X
  \rightarrow U$ is a cyclic Galois covering.  The varieties defined
  so far make up the commutative diagram:
  \begin{equation}
    \label{eq:24}
    \begin{CD}
      Y = \Proj\left\{\frac{k[x_0, x_1, \dots, x_m]}{(f_1\dotsm f_r -
          x_0^n)}\right\} @<{\supseteq}<< X = \Spec\left\{\frac{k[x_1,
          \dots, x_m]}{(f_1\dotsm f_r -
          1)}\right\}  =  Y - L_1-\dots - L_r \\
      @VV{\pi}V @VV{\pi}V  \\
      Z_0 = \Proj{k[x_1, \dots, x_n]} @<{\supseteq}<< U = Z_0 - L_1 -
      \dots - L_r
    \end{CD}
  \end{equation}
  At the generic point of $Y$ we have
  \begin{equation}
    \label{eq:14}
    \frac{f_1}{x_0^{d_1}}\dotsm
    \frac{f_r}{x_0^{d_r}} = 1
  \end{equation}
  so the function $f_i/x_0^{d_i}$ represents a unit in the coordinate
  ring $\mathcal O(X)$.  The diagram
  \begin{equation}
    \label{eq:18}
    \begin{CD}
      1 @>>> \prod_{i=1}^r \left\langle {f_i}/{x_0^{d_i}}
      \right\rangle @>{\Div}>> {\bigoplus^r_{i=1} \mathbb Z
        \Div(f_i/x_0^{d_i})}
      @>>> 0 \\
      @. @VV{\alpha}V @V{\beta}VV @VVV \\
      1 @>>> \frac{\mathcal O^\ast(X)}{k^\ast} @>{\Div}>>
      {\bigoplus^r_{i=1} \mathbb Z L_i} @>{\chi}>> H @>>> 0
    \end{CD}
  \end{equation}
  commutes, where the second row comes from the middle row in
  \eqref{eq:13} and is exact.  Given \eqref{eq:14} and
  Proposition~\ref{prop:3.9}, it is enough to show $\alpha$ is onto.
  For this, we utilize the matrix of the map $\beta$ in \eqref{eq:18}.
  Since $Y$ is defined by $f_1\dotsm f_r = x_0^n$, each $F_i$
  intersects $Y$ along $L_i$ with intersection multiplicity $n$. It
  follows that the divisor of $f_i/x_0^{d_i}$ on $Y$ is
  \[
  \Div(f_i/x_0^{d_i}) = nL_i - d_1 L_1-\dots - d_r L_r \ldotp
  \]
  The matrix of the map $\beta$ is:
  \begin{equation}
    \label{eq:19}
    C =
    \begin{bmatrix}
      n-d_1 & -d_2 & \dots & -d_{r-1} & -d_r\\
      -d_1 & n-d_2 & \dots & -d_{r-1} & -d_r\\
      & &         \vdots & & \\
      -d_1 & -d_2 & \dots & n-d_{r-1} & -d_r \\
      -d_1 & -d_2 & \dots & -d_{r-1} & n-d_r
    \end{bmatrix}\ldotp
  \end{equation}
  We compute the invariant factors of the matrix $C$.  First, replace
  column $r$ with the sum of columns $1$, \dots, $r$.  Second, for $i =
  1, \dots, r-1$, replace row $i$ with row $i$ minus row $r$. After
  these column and row operations, the resulting matrix is
  \begin{equation}
    \label{eq:23}
    \begin{bmatrix}
      n &  0 & \dots & 0 & 0\\
      0 & n & \dots & 0 & 0\\
      & &         \vdots & & \\
      0 & 0 & \dots & n & 0 \\
      -d_1 & -d_2 & \dots & -d_{r-1} & 0
    \end{bmatrix}\ldotp
  \end{equation}
  The ideal in $\mathbb Z$ generated by $d_1, d_2, \dots, d_{r-1}, n$
  is the unit ideal.  Using the method of minors (see for example,
  \cite[Theorem~9.64]{MR2043445}), one can compute the invariant
  factors of $C$. They are $1$ and $0$, each with multiplicity one,
  and $n$ with multiplicity $r-2$.  Since $d_1, \dots, d_r$ are
  relatively prime, $\Cl(U) = \Cl(Z_0 - L_1-\dots- L_r) = 0$.  By
  Proposition~\ref{prop:3.12} we know that the group $H$ in
  \eqref{eq:18} is isomorphic to $\mathbb Z \oplus \left(\mathbb
    Z/n\right)^{(r-2)}$.  By the Snake Lemma applied to \eqref{eq:18},
  the sequence
  \begin{equation}
    \label{eq:25}
    0 \rightarrow \Cokernel{\alpha}
    \rightarrow \Cokernel{\beta}
    \rightarrow H \rightarrow 0
  \end{equation}
  is exact.  The second and third groups in \eqref{eq:25} have the
  same invariant factors.  The exact functor $()\otimes_Z \mathbb Q$
  applied to \eqref{eq:25} shows $\Cokernel{\alpha}$ is finite.  Apply
  the functor $()\otimes_Z \mathbb Q/ \mathbb Z$ to \eqref{eq:25}.
  The exact $\Tor$ sequence which results shows that the torsion
  subgroup of $\Cokernel{\beta}$ maps onto the torsion subgroup of
  $H$.  This proves $\Cokernel{\alpha} = \langle 0\rangle$. Therefore
  $\{f_1, \dots, f_{r-1}\}$ is a basis for the group $\mathcal
  O^\ast(X)/k^\ast$.
\end{proof}
\begin{example}
  \label{example:3.14}
  This is a special case of Example~\ref{example:1.4}, as well as
  Theorem~\ref{theorem:3.15}.  Let $m \geq 2$ and $n \geq 2$. Let
  $f_1, \dots, f_n$ be distinct linear forms in $k[x_1, \dots, x_m]$.
  Let $X$ be the affine variety in $\mathbb A^m$ defined by $f_1
  \dotsm f_n =1$.  Then $\mathcal O(X) = k[x_1, \dots, x_m]/(f_1\dotsm
  f_n -1)$.  By Theorem~\ref{theorem:3.15}, $\{f_1, \dots, f_{n-1}\}$
  is a basis for the group $\mathcal O^\ast(X)/k^\ast$.
\end{example}

\section*{Acknowledgment}
The author is sincerely grateful to the referee for a very thorough
report that was thoughtfully and graciously prepared.


\begin{thebibliography}{10}

\bibitem{MR749574}
W.~Barth, C.~Peters, and A.~Van~de Ven, \emph{Compact complex surfaces},
  Ergebnisse der Mathematik und ihrer Grenzgebiete (3) [Results in Mathematics
  and Related Areas (3)], vol.~4, Springer-Verlag, Berlin, 1984. \MR{749574
  (86c:32026)}

\bibitem{CHR:GtGccr}
S.~U. Chase, D.~K. Harrison, and Alex Rosenberg, \emph{Galois theory and
  {G}alois cohomology of commutative rings}, Mem. Amer. Math. Soc. No.
  \textbf{52} (1965), 15--33. \MR{0195922 (33 \#4118)}

\bibitem{CR:RTF}
Charles~W. Curtis and Irving Reiner, \emph{Representation theory of finite
  groups and associative algebras}, Pure and Applied Mathematics, Vol. XI,
  Interscience Publishers, a division of John Wiley \& Sons, New York-London,
  1962. \MR{0144979 (26 \#2519)}

\bibitem{MR2286236}
David~S. Dummit and Richard~M. Foote, \emph{Abstract algebra}, third ed., John
  Wiley \& Sons Inc., Hoboken, NJ, 2004. \MR{2286236 (2007h:00003)}

\bibitem{F:Bgac}
T.~J. Ford, \emph{The {B}rauer group of an affine cone}, J. Pure Appl. Algebra
  \textbf{155} (2001), no.~1, 29--40. \MR{1804326 (2001k:14044)}

\bibitem{F:Bgrccs}
Timothy~J. Ford, \emph{The relative {B}rauer group of a cyclic cover of affine
  space}, J. Pure Appl. Algebra \textbf{215} (2011), no.~5, 847--865.
  \MR{2747225 (2012a:16036)}

\bibitem{F:rBgadp}
\bysame, \emph{The relative {B}rauer group of an affine double plane}, Comm.
  Algebra (2013), to appear.

\bibitem{F:Bgars}
Timothy~J. Ford and Drake~M. Harmon, \emph{The {B}rauer group of an affine
  rational surface with a non-rational singularity}, J. Algebra \textbf{388}
  (2013), 107--140. \MR{3061681}

\bibitem{F:DCG}
Robert~M. Fossum, \emph{The divisor class group of a {K}rull domain},
  Springer-Verlag, New York, 1973. \MR{0382254 (52 \#3139)}

\bibitem{MR0313252}
William Fulton, \emph{Algebraic curves. {A}n introduction to algebraic
  geometry}, W. A. Benjamin, Inc., New York-Amsterdam, 1969, Notes written with
  the collaboration of Richard Weiss, Mathematics Lecture Notes Series.
  \MR{0313252 (47 \#1807)}

\bibitem{G:GBI}
Alexander Grothendieck, \emph{Le groupe de {B}rauer. {I}. {A}lg\`ebres
  d'{A}zumaya et interpr\'etations diverses}, Dix Expos\'es sur la Cohomologie
  des Sch\'emas, North-Holland, Amsterdam, 1968, pp.~46--66. \MR{0244269 (39
  \#5586a)}

\bibitem{G:GBII}
\bysame, \emph{Le groupe de {B}rauer. {I}{I}. {T}h\'eorie cohomologique}, Dix
  Expos\'es sur la Cohomologie des Sch\'emas, North-Holland, Amsterdam, 1968,
  pp.~67--87. \MR{0244270 (39 \#5586b)}

\bibitem{H:AG}
Robin Hartshorne, \emph{Algebraic geometry}, Springer-Verlag, New York, 1977.
  \MR{0463157 (57 \#3116)}

\bibitem{M:EC}
James~S. Milne, \emph{\'{E}tale cohomology}, Princeton University Press,
  Princeton, N.J., 1980. \MR{559531 (81j:14002)}

\bibitem{MR0282985}
David Mumford, \emph{Abelian varieties}, Tata Institute of Fundamental Research
  Studies in Mathematics, No. 5, Published for the Tata Institute of
  Fundamental Research, Bombay, 1970. \MR{0282985 (44 \#219)}

\bibitem{MR3013032}
Fabian Reede and Ulrich Stuhler, \emph{Division algebras and unit groups on
  surfaces}, Affine flag manifolds and principal bundles, Trends Math.,
  Birkh\"auser/Springer Basel AG, Basel, 2010, pp.~203--217. \MR{3013032}

\bibitem{MR0441978}
David~E. Rohrlich, \emph{Points at infinity on the {F}ermat curves}, Invent.
  Math. \textbf{39} (1977), no.~2, 95--127. \MR{0441978 (56 \#367)}

\bibitem{R:IHA}
Joseph~J. Rotman, \emph{An introduction to homological algebra}, Pure and
  Applied Mathematics, vol.~85, Academic Press Inc. [Harcourt Brace Jovanovich
  Publishers], New York, 1979. \MR{538169 (80k:18001)}

\bibitem{MR2043445}
\bysame, \emph{Advanced modern algebra}, Prentice Hall Inc., Upper Saddle
  River, NJ, 2002. \MR{2043445 (2005b:00002)}

\bibitem{MR0204454}
Pierre Samuel, \emph{\`{A} propos du th\'eor\`eme des unit\'es}, Bull. Sci.
  Math. (2) \textbf{90} (1966), 89--96. \MR{0204454 (34 \#4296)}

\bibitem{MR563965}
Jacques V{\'e}lu, \emph{Sur un th\'eor\`eme de {D}. {R}ohrlich}, C. R. Acad.
  Sci. Paris S\'er. A-B \textbf{290} (1980), no.~3, A157--A160. \MR{563965
  (81f:14013)}

\end{thebibliography}

\def\cfudot#1{\ifmmode\setbox7\hbox{$\accent"5E#1$}\else
  \setbox7\hbox{\accent"5E#1}\penalty 10000\relax\fi\raise 1\ht7
  \hbox{\raise.1ex\hbox to 1\wd7{\hss.\hss}}\penalty 10000 \hskip-1\wd7\penalty
  10000\box7}
\providecommand{\bysame}{\leavevmode\hbox to3em{\hrulefill}\thinspace}
\providecommand{\MR}{\relax\ifhmode\unskip\space\fi MR }
\providecommand{\MRhref}[2]{%
  \href{http://www.ams.org/mathscinet-getitem?mr=#1}{#2}
}
\providecommand{\href}[2]{#2}

\end{document}